\numberwithin{equation}{section}
\renewcommand{\d}{\delta}
\DeclareMathOperator{\Sp}{Sp}
\title{Boundedness of Riesz transform on $H^1$ under sub-Gaussian estimates}
\author{{\Large Joseph {\sc Feneuil}\footnote {The author is  supported by the ANR project ``Harmonic Analysis at its Boundaries'',   ANR-12-BS01-0013-03}} \\
{Institut Fourier, Grenoble} \\
joseph.feneuil@ujf-grenoble.fr
\vspace{1cm}}
\date{\today}
\begin{document}

\maketitle

\begin{abstract}
Let $\Gamma$ be a graph and $P^k$ a Markov chain. Assume that $\Gamma$  satisfies the doubling measure property and the kernel $p_k$ of $P^k$ verifies pointwise estimates, for example pointwise sub-Gaussian estimates.
We prove of the $L^p$-boundedness of the Riesz transform $\nabla \Delta^{-\frac12}$ for any $p\in (1,2)$, and so extend the main result of \cite{CDleq2} and \cite{Russ} to spaces with fractal structures (Sierpinski gaskets, ...).

The $L^p$-boundedness of $\nabla \Delta^{-\frac12}$ is deduced from the $H^1(\Gamma)$-$H^1(T_\Gamma)$ boundedness of the Riesz transform, where $H^1(\Gamma)$ and $H^1(T_\Gamma)$ stand respectively for an Hardy space of functions and for an Hardy space of $1$-forms. The interpolation between our space $H^1(\Gamma)$ and $L^2(\Gamma)$ provides $L^p(\Gamma)$, and is therefore bigger than the Hardy space defined via Gaffney estimates (\cite{AMR}, \cite{Fen2}).

The same results holds on Riemannian manifolds and an alternative proof can be $L^p$-boundedness of the Riesz transform can be found in \cite{CCFR}.
\end{abstract}

{\bf Keywords:} Graphs - Hardy spaces - Riesz transform - Markov kernel - sub-Gaussian estimates.

{\bf MSC2010:} Primary: 42B20, Secondary: 42B30 - 60J10 - 58J35.

\tableofcontents

\pagebreak

We use the following notations. $A(x) \lesssim B(x)$ means that there exists $C$  independent  of $x$ such that $A(x) \leq C \,  B(x)$ for all $x$, 
while  $A(x) \simeq B(x)$ means that $A(x) \lesssim B(x)$ and $B(x) \lesssim A(x)$.
The parameters  from which the constant is  independent  will be either obvious from context or recalled. 

\noindent Furthermore, if $E,F$ are Banach spaces, $E \subset F$ means that $E$ is continuously included in $F$. In the same way, $E=F$ means that the norms are equivalent.

\section{Introduction and statement of the results}

\subsection{Introduction}

Let $d\in \N^*$. In the Euclidean case $\R^d$, the Riesz transforms are the linear operators $\partial_j (-\Delta)^{-\frac12}$.
A remarkable property of the Riesz transforms is that they are $L^p$ bounded for all $p\in (1,+\infty)$ (see \cite[Chapter 2, Theorem 1]{Steinsingular}), which implies the equivalence
\begin{equation} \label{EquivalenceIntro} \|\nabla f\|_{L^p} := \sum_{j=1}^d \|\dr_j f\|_{L^p} \simeq \|(-\Delta)^{1/2} f\|_{L^p} \end{equation}
for any $p\in (1,+\infty)$. It allows us, when $p\in (1,+\infty)$, to characterize $W^{1,p}(\R^d)$ as the spaces of functions $f\in L^p(\R^d)$ satisfying $(-\Delta)^{1/2} f \in L^p(\R^d)$ and to extend the Sobolev spaces $W^{s,p}(\R^d)$ to the case where $s$ is not an integer. 

The $L^p$ boundedness of Riesz transforms has been extended to other settings. Let $M$ be a complete Riemannian manifold, with $\nabla$ the Riemannian gradient and $\Delta$ the Beltrami Laplace operator.
Assume $M$ is doubling. Under pointwise Gaussian upper estimate of the heat kernel $h_t$, the Riesz transform $\nabla\Delta^{-\frac12}$ is bounded on $L^p(M)$ for all $p\in (1,2]$ (see \cite{CDleq2}).
When $p>2$, the $L^p$ boundedness of the Riesz transform holds under much stronger condition, expressed in term of Poincar\'e inequalities on balls and of the domination of the gradient of the semigroup in $L^q$ for some $q>p$
(with $L^2$ Poincar\'e inequality, see \cite{CDgeq2}; with $L^q$ Poincaré inequality, see \cite{BCF}).
Similar results were established in the case of graphs (see \cite{Russ} when $p<2$, see \cite{BRuss} when $p>2$).

We are interested now by the limit case $p=1$. 
It appears that the Hardy space $H^1$ is the proper substitute of $L^1$ when Riesz transforms are involved. 
In the Euclidean case, $H^1(\R^d)$ can be defined as the space of functions $f\in L^1(\R^d)$ such that $\dr_j(-\Delta)^{-\frac12} f \in L^1(\R^d)$ for all $j\in \bb 1,d\bn$ (see \cite{FS2}).
Moreover, the Riesz transforms $\dr_j(-\Delta)^{-\frac12}$, that are bounded from $H^1(\R^d)$ to $L^1(\R^d)$, are actually bounded on $H^1(\R^d)$.

This last result, namely the $H^1$ boundedness of the Riesz transform, has been extended to complete Riemannian manifolds in \cite{AMR} - completed in \cite{AMM} - and on graphs in \cite{Fen2},
under the only assumption that the space is doubling. In order to do this, the authors introduced some Hardy spaces of functions and of forms defined by using the Laplacian.
In \cite{AMR}, when $M$ is a (complete doubling) Riemannian manifold, the authors deduced then a $H^p(M)$ boundedness of the Riesz transform for $p\in (1,2)$, 
where the spaces $H^p$ are defined by means of quadratic functionals. Under pointwise Gaussian upper estimates $H^p(M) =L^p(M)$ and thus they recover $L^p(M)$ boundedness of the Riesz transform obtained in \cite{CDleq2}.

Our goal in this paper is different to the one in \cite{AMR} and \cite{Fen2}. If we have some appropriate pointwise estimates on the Markov kernel - such as sub-Gaussian estimates - we build Hardy spaces of functions $H^1(\Gamma)$ and of 1-forms $H^1(T_\Gamma)$ such that:
\begin{itemize}
\item The Riesz transform is bounded from $H^1(\Gamma)$ to $H^1(T_\Gamma)$,
\item Any linear operator bounded from $H^1(\Gamma)$ to $L^1(\Gamma)$ and on $L^2(\Gamma)$ is also bounded on $L^p(\Gamma)$ for any $p\in (1,2)$.
\end{itemize}
In particular, the Hardy spaces defined here are bigger (or equal) than the ones in \cite{Fen2}, and the $L^p(\Gamma)$ boundedness of the Riesz transform will be obtained for a class of graphs strictly larger than the ones in \cite{Russ,Fen2}. 
All our results can be adapted to the case of Riemannian manifolds. We discuss the main differences between the two cases in Appendix \ref{Riemannian}.

Even if our goal differs from \cite{AMR}, the proofs of this article follow and extend the methods introduced in \cite{AMR} and \cite{HoMay},  which consist to build Hardy spaces using only Gaffney-type estimates. These methods have been adapted to Hardy spaces on general measured spaces in \cite{HLMMY} and on graphs in \cite{Fen2}.
Moreover, on a manifold $M$ and under sub-Gaussian estimates of the heat kernel, a Hardy space $H^1(M)$ of functions has been built in \cite{KU}, \cite{ChenThesis}. The Hardy space $H^1(\Gamma)$ in the present paper is the counterpart of this space $H^1(M)$ on graphs.

Note that a more direct proof of the $L^p$ boundedness of the Riesz transform under sub-Gaussian estimates for $p\in (1,2)$ can be now found in \cite{CCFR}. To do it, the authors proved the weak $L^1$-boundedness of the Riesz transform by using an idea of the present paper, that is a new use of the relation of Stein \cite[Lemma 2, p.49]{Stein1}. Contrary to the present paper, the article \cite{CCFR} is primarily written  in the case of Riemannian manifolds.

\subsection{Main result}

Let consider a infinite connected graph with measure $m$. We write $x \sim y$ if the two vertices $x$ and $y$ are neighbors. A finite sequence of vertices $x_0, \dots, x_n$ is a path (of length $n$) if $x_{i-1} \sim x_i$ for any $i\in \bb 1,n \bn$. The canonical distance $\d(x,y)$ is defined as the length of the shortest path linking $x$ to $y$.
We denote by $B_{\d}(x,r)$ the open ball of center $x$ and of radius $r$ and by $V_\d(x,r)$ the quantity $m(B_\d(x,r))$.

Let $P^k$ be a Markov chain - or discrete semigroup -  on $\Gamma$ and let $p_k(x,y)$ the kernel of$P^k$.  
We define a positive Laplacian by $\Delta:= I-P$, a length of the gradient by $\nabla$, the "tangent bundle" of $\Gamma$ by $T_\Gamma$ and the "external differentiation" by $d$. Complete definitions and properties are given in Section \ref{defgraphs}. 

We have from the definition and  spectral theory, for all $f \in L^2(\Gamma)$,
\[ \|d f\|_{L^2(T_\Gamma)}^2 = \|\nabla f\|_{L^2(\Gamma)}^2 =(\Delta f,f)=\left\|\Delta^{1/2} f\right\|_{L^2(\Gamma)}^2.\]
One says that the Riesz transform $\nabla\Delta^{-1/2}$ is $L^p$ bounded on $M$ if
\begin{equation} \label{Rp} \tag{$R_p$}
\|\nabla f\|_p \lesssim\left\|\Delta^{1/2} f\right\|_p, \,\,\forall f \in \mathcal C_0^{\infty}(M).
\end{equation}

\medskip

The first condition we need on the graph is the following one
\begin{defi}
There exists $\epsilon:=\epsilon_{LB}$ such that, for any $x\in \Gamma$,
\begin{equation} \label{LB} \tag{LB}
(P\1_{x})(x) > \epsilon.
\end{equation}
\end{defi}

The condition \eqref{LB} mean that the probability to stay on one point is uniformly bounded from below. It implies in particular the following estimate on $P^k$:
\begin{equation} \label{Ana} \|\Delta P^k f\|_{L^2} \leq C \frac1k\|f\|_{L^2}. \end{equation}
Note that a discrete semigroup $P^k$ satisfying \eqref{Ana} is said analytic (see \cite{CSC}). The converse implication \eqref{Ana} to \eqref{LB} doesn't hold. But, under a doubling volume property, \eqref{Ana} forces an odd power of $P$ to satisfy \eqref{LB} (see \cite{FenLB}).

\medskip

Our article aims to extend the following statement for the Riesz transform on graphs (see \cite[Theorem 1.40 and Remark 1.41a]{Fen2}, see  \cite{AMR} for the counterpart on Riemannian manifolds, see also \cite{CDleq2} and \cite{Russ}). 

\begin{theo} \label{thm-cd}
Let $\Gamma$ be a graph satisfying the volume doubling property
\begin{equation}\label{DV2}
V_\d(x,2r)\lesssim V_\d(x,r) \qquad \forall x\in M, \, \forall r\in \N.\tag{D$_2$}.
\end{equation}
Then, the Riesz transform $d\Delta^{-\frac12}$ is bounded from $H^1_{\d^2}(\Gamma)$ to $H^1_{\d^2}(T_\Gamma)$, where $H^1_{\d^2}(\Gamma)$ and $H^1_{\d^2}(T_\Gamma)$ stand for some complete spaces included in respectively $L^1(\Gamma)$ and $L^1(T_\Gamma)$.

Assume moreover that the kernel $p_k$ of $P^k$ satisfies the pointwise upper estimate 
\begin{equation} \label{UEd2} \tag{UE$_2$}
p_k(x,y)\lesssim \frac{1}{V_\d(x,\sqrt k)} \exp\left(-c\frac{\d^2(x,y)}{k}\right),\,\,\forall x,y\in M,\, t>0,
\end{equation}
for some $c>0$.
Then the Riesz transform $\nabla \Delta^{-\frac12}$ is bounded on $L^p(\Gamma)$ for any $p\in (1,2)$.
\end{theo}


Note that in particular, we didn't know previously any case of graphs where the $L^p$ boundedness of Riesz transform holds for any $p\in (1,2)$ and where the pointwise Gaussian estimate \eqref{UEd2} doesn't hold.

\medskip

We want to define some condition on kernel $p_k(x,y)$ weaker than the pointwise Gaussian estimates. Fix $\beta$ a function on $M^2$, $1\leq \beta \leq B < +\infty$. Define $\rho(x,y) = \d(x,y)^{\beta(x,y)}$ and $V_\rho(x,k) = m(\{y\in M, \, \rho(x,y) < k\})$.

\begin{defi}
We say that $M$ satisfies \eqref{UE} if for any $N\in \N$, there exists $C_N >0$ such that
\begin{equation} \label{UE} \tag{UE$_\beta$}
p_{k-1}(x,y)\leq \frac{C_N}{V_\rho(x,k)} \left(1+ \frac{\rho(x,y)}{k} \right)^{-N} \forall x,y\in M,\, k\geq 1.
\end{equation}
\end{defi}

Check that the condition \eqref{UEd2} implies \eqref{UE} when $\beta \equiv 2$. We let the reader see Appendix \ref{Fen4SectionExamples} to see examples of graphs satisfying \eqref{UE} for various functions $\beta$ and references on these pointwise (non necessary Gaussian) estimates.

\noindent Remark that $p_{k-1}(x,y)$ can be replaced equivalently by $p_{k}(x,y)$ in \eqref{UE}. Yet the shift in $k$ allow us to give an estimate of $p_n(x,y)$ for any $n\geq 0$.

\begin{defi}
We say that $M$ satisfies \eqref{DV} if 
\begin{equation}\label{DV}
V_\rho(x,2r)\lesssim V_\rho(x,r) \qquad \forall x\in M, \, \forall r>0.\tag{D$_\beta$}
\end{equation}
\end{defi}

Note that when $\beta$ is a constant function, then \eqref{DV} is equivalent to \eqref{DV2}. Note also that the assumption \eqref{DV} yields

\begin{prop} \label{Fen4propDV}
 Let $\Gamma$ satisfying  \eqref{DV}.  Then there exists $d>0$ such that
\begin{equation} \label{Fen4PDV} V_\rho(x,\lambda r) \lesssim \lambda^d V_\rho(x,r) \qquad \forall x\in \Gamma, \, r>0 \text{ and } \lambda \geq 1.\end{equation}
\end{prop}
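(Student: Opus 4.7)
The plan is to derive the power-type bound by iterating the doubling inequality \eqref{DV}. First, I would fix a constant $C\geq 1$ such that $V_\rho(x,2r)\leq C\,V_\rho(x,r)$ for every $x\in\Gamma$ and $r>0$; such a $C$ exists by the very definition of $\lesssim$ in \eqref{DV}. A straightforward induction on $n\in\mathbb{N}$ then yields
\[
V_\rho(x,2^n r)\leq C^n\,V_\rho(x,r),\qquad \forall\,x\in\Gamma,\ r>0,\ n\in\mathbb{N}.
\]

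Next I would set $d:=\log_2 C\geq 0$, so that $C^n=2^{nd}$. Given $\lambda\geq 1$, pick $n\in\mathbb{N}$ with $2^{n-1}\leq \lambda<2^n$; in particular $2^n\leq 2\lambda$. Since $V_\rho(x,\cdot)$ is obviously non-decreasing in its second argument (as $\{y:\rho(x,y)<r\}$ grows with $r$), we have
\[
V_\rho(x,\lambda r)\leq V_\rho(x,2^n r)\leq C^n V_\rho(x,r) = 2^{nd}\,V_\rho(x,r)\leq (2\lambda)^d\,V_\rho(x,r),
\]
which is exactly \eqref{Fen4PDV} with implicit constant $2^d$.

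The argument is entirely elementary, and no step looks delicate: the only thing that might trip one up is remembering that \eqref{DV} gives a \emph{uniform} constant $C$ (independent of $x$ and $r$), so that the induction produces a bound of the form $C^n$ rather than a product of $x$-dependent factors. Once that is in hand, the passage from dyadic radii to arbitrary $\lambda\geq 1$ costs only the harmless factor $2^d$.
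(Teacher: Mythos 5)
Your proof is correct and is the standard iteration argument that the paper implicitly relies on (the paper states this proposition without proof, treating it as a routine consequence of \eqref{DV}). The only cosmetic remark is that your $d=\log_2 C$ could be $0$ if one may take $C=1$, whereas the statement asks for $d>0$; since $\lambda\geq 1$, enlarging $d$ only weakens the bound, so this is harmless.
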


Our main result is

\begin{theo} \label{MainTheo}
Let $(\Gamma,m)$ be a connected graph and $P$ a random walk on $\Gamma$. Assume that $\Gamma$ satisfies \eqref{DV} and \eqref{UE}.
Then there exists two complete spaces $H^1(\Gamma) \subset L^1(\Gamma)$ and $H^1(T_\Gamma) \subset L^1(T_\Gamma)$ such that:
\begin{enumerate}
\item The Riesz transform $d\Delta^{-\frac12}$ is an homomorphism between $H^1(\Gamma)$ and $H^1(T_\Gamma)$.
\item Every linear operator bounded from $H^1(\Gamma)$ to $L^1(\Gamma)$ and bounded on $L^2(\Gamma)$ is bounded on $L^p(\Gamma)$ for any $p\in (1,2)$.
\end{enumerate}
As a consequence, the Riesz transform $\nabla\Delta^{-\frac12}$ is bounded on $L^p(M)$ for any $p\in (1,2)$.
\end{theo}

\begin{rmk}
Actually, the point (2) of the above Theorem can be extended to linearizable operators. The linearizable operators are a subclass of the sublinear operator containing the quadratic functionals and the maximal operators, see \cite[Definition 5.2]{BZ}.
\end{rmk}

The plan of the paper is as follows.  
Section \ref{Tools} introduce the discrete setting and the main notations. Moreover, we introduced the tools used in the paper, such as the decomposition in atoms in tents spaces or the interpolation with Hardy spaces. 
Section \ref{Estimates} provides the estimates needed in the sequel of the article. In particular, we give some crucial off-diagonal estimates on the gradient of the Markov chain $P^k$, by using a relation of Stein (\cite[Chapter 2, Lemma 2]{Stein1}) and it adaptation to the discrete case (\cite{Dungey2}, \cite[Section 4]{Fen1}).
In Section \ref{EqualityHardySpaces}, we give some equivalent definitions of the Hardy spaces of functions and of 1-forms.  
In Section \ref{ProofMainTheo}, we prove the Theorem \ref{MainTheo}. More exactly, we will check that the Hardy spaces previously constructed in Section \ref{EqualityHardySpaces} satisfies the assumption of Theorem \ref{MainTheo}.
In Appendix \ref{Fen4SectionExamples}, you will find examples of manifolds satisfying \eqref{UE} for various functions $\beta$. We prove in this section that there exists a graph that satisfies \eqref{UE} for some non constant $\beta$ and that doesn't satisfy \eqref{UE} for any $\beta \equiv m$ constant.
At last, Appendix \ref{Riemannian} is devoted to the statement of the counterpart results in the case of Riemannian manifolds. We discuss there few differences with the discrete case.

\section{Notations and tools}

\label{Tools}

\subsection{The discrete setting}

\label{defgraphs}

Let $\Gamma$ be an infinite set and $\mu_{xy} = \mu_{yx} \geq 0$ a symmetric weight on $\Gamma \times \Gamma$. 
The couple $(\Gamma, \mu)$ induces a (weighted unoriented) graph structure if we define the set of edges by
$$E = \{ (x,y) \in \Gamma \times \Gamma, \, \mu_{xy} >0 \}.$$
We call then $x$ and $y$ neighbors (or $x\sim y$) if  and only if  $(x,y) \in E$.\par
\noindent We will assume that the graph is locally uniformly finite,  that is  there exists $M_0 \in \N$ such that for all $x\in \Gamma$, 
 \begin{equation} \label{Fen4defM0}
\# \{y\in \Gamma, \, y\sim x\} \leq M_0.
\end{equation}
In other words, the number of neighbors of a vertex is uniformly bounded. Note that a graph that satisfies \eqref{DV2} or \eqref{DV} is necessary locally uniformly locally finite \par

\noindent  We define the weight $m(x)$ of a vertex $x \in \Gamma$ by $m(x) = \sum_{x\sim y} \mu_{xy}$. 
More generally, the volume (or measure) of a subset $E \subset \Gamma$ is defined as $m(E) := \sum_{x\in E} m(x)$. \par
\noindent We define now the $L^p(\Gamma)$ spaces. For all $1\leq p < +\infty$, we say that a function $f$ on $\Gamma$ belongs to $L^p(\Gamma,m)$ (or $L^p(\Gamma)$) if
$$\|f\|_p := \left( \sum_{x\in \Gamma} |f(x)|^p m(x) \right)^{\frac{1}{p}} < +\infty,$$
while $L^\infty(\Gamma)$ is the  space  of functions satisfying 
$$\|f\|_{\infty} : = \sup_{x\in\Gamma} |f(x)| <+\infty.$$
Let us define for all $x,y\in \Gamma$ the discrete-time reversible Markov kernel $p$ associated with the measure $m$ by $p(x,y) = \frac{\mu_{xy}}{m(x)m(y)}$.
The discrete kernel $p_k(x,y)$ is then defined recursively for all $k\geq 0$ by
\begin{equation}
\left\{ 
\begin{array}{l}
p_0(x,y) = \frac{\delta(x,y)}{m(y)} \\
p_{k+1}(x,y) = \sum_{z\in \Gamma} p(x,z)p_k(z,y)m(z).
\end{array}
\right.
\end{equation}

Notice that for all $k\geq 1$, we have
\begin{equation} \label{Fen4sum=1}
 \|p_k(x,.)\|_{L^1(\Gamma)} = \sum_{y\in \Gamma} p_k(x,y)m(y) = \sum_{d(x,y) \leq l} p_k(x,y)m(y) = 1 \qquad \forall x\in \Gamma,
\end{equation}
and that the kernel is symmetric:
\begin{equation} \label{Fen4symmetry}
 p_k(x,y) = p_k(y,x) \qquad \forall x,y\in \Gamma.
\end{equation}
For all functions $f$ on $\Gamma$, we define $P$ as the operator with kernel $p$, i.e.
\begin{equation}\label{Fen4defP} Pf(x) = \sum_{y\in \Gamma} p(x,y)f(y)m(y) \qquad \forall x\in \Gamma.\end{equation}
Note that in this case, the assumption \eqref{LB} becomes
\begin{equation} \tag{LB}
p(x,x)m(x) > \epsilon_{LB} \qquad \forall x\in \Gamma
\end{equation}
Check also that $P^k$ is the operator with kernel $p_k$. 

Since $p(x,y) \geq 0$ and \eqref{Fen4sum=1} holds, one has, for all $p\in [1,+\infty]$ ,
\begin{equation} \label{Fen4Pcont}
 \|P\|_{p\to p } \leq 1.
\end{equation}

\begin{rmk} \label{Fen4poweri-p}
Let $1\leq p<+\infty$. Since, for all $k\geq 0$, $\left\Vert P^k\right\Vert_{p\rightarrow p}\leq 1$, the operators $(I-P)^{\beta}$ and $(I+P)^{\beta}$ are $L^p$-bounded for all $\beta\geq 0$ (see \cite{CSC}).
\end{rmk}

\noindent We define a nonnegative Laplacian on $\Gamma$ by $\Delta = I-P$. One has then

\begin{equation}
 \begin{split}
 \left<\Delta f,f\right>_{L^2(\Gamma)} & = \sum_{x,y\in \Gamma} p(x,y)(f(x)-f(y))f(x)m(x)m(y) \\
& = \frac{1}{2} \sum_{x,y\in \Gamma} p(x,y)|f(x)-f(y)|^2m(x)m(y),
 \end{split}
\end{equation}
where we use \eqref{Fen4sum=1} for the first equality and \eqref{Fen4symmetry} for the second one. The last calculus proves that the following operator
$$\nabla f(x) = \left( \frac{1}{2} \sum_{y\in \Gamma} p(x,y) |f(y)-f(x)|^2 m(y)\right)^{\frac{1}{2}},$$
called ``length of the gradient'' (and the definition of which is taken from \cite{CoulGrigor}), satisfies
\begin{equation} \label{Fen4Katopb}
\|\nabla f\|^2_{L^2(\Gamma)} =  <\Delta f,f>_{L^2(\Gamma)} =  \|\Delta^{\frac12}f\|_{L^2(\Gamma)}^2.
\end{equation}

\noindent We recall now definitions of $1$-forms on graphs and their first properties (based on \cite{Fen2}).
We define, for all $x\in \Gamma$, the set $T_x = \{(x,y)\in \Gamma^2, \, y\sim x\}$ and for all set $E\subset \Gamma$,
$$T_E = \bigcup_{x\in E} T_x = \{(x,y)\in E \times \Gamma, \, y\sim x\}.$$

\begin{defi}
If $x\in \Gamma$, we define, for all  functions  $F_x$ defined on $T_x$ the norm 
$$|F_x|_{T_x} = \left( \frac{1}{2} \sum_{y\sim x} p(x,y) m(y) |F_x(x,y)|^2 \right)^\frac12.$$
Moreover, a function $F: \, T_\Gamma \to \R$ belongs to $L^p(T_\Gamma)$ if
\begin{enumerate}[(i)]
 \item $F$ is antisymmetric, that is $F(x,y) = -F(y,x)$ for all $x\sim y$,
 \item $\|F\|_{L^p(T_\Gamma)} := \left\|x\mapsto |F(x,.)|_{T_x} \right\|_{L^p(\Gamma)} < +\infty$.
\end{enumerate}
The Hilbert space $L^2(T_\Gamma)$ is outfitted with the inner product  $\langle\cdot,\cdot\rangle$  defined as
$$\left<F,G\right> = \frac{1}{2} \sum_{x,y\in \Gamma} p(x,y) F(x,y) G(x,y) m(x) m(y).$$
\end{defi}

\begin{defi} Let $f$ a function on $\Gamma$ and $F$ an antisymmetric function on $T_\Gamma$. 
Define the operators $d$ and $d^*$ by
$$d f(x,y) : = f(x) - f(y) \qquad \forall (x,y)\in T_\Gamma$$
and
$$d^* F(x) : = \sum_{y\sim x} p(x,y) F(x,y) m(y) \qquad \forall x\in \Gamma.$$ 
\end{defi}

\begin{rmk}
It is plain to see that $d^*d = \Delta$ and $|df(x,.)|_{T_x} = \nabla f(x)$. 

\noindent  As the notation $d^{\ast}$ suggests,  $d^*$ is the adjoint of $d$, that is for all $f\in L^2(\Gamma)$ and $G\in L^2(T_\Gamma)$, 
\begin{equation} \label{Fen4dstaradjointofd}\left<d f,G \right>_{L^2(T_\Gamma)} = \left< f, d^*G\right>_{L^2(\Gamma)}.\end{equation}
The proof of this fact can be found in \cite[Section 8.1]{BRuss}.
\end{rmk}

We introduce a  subspace  of $L^2(T_\Gamma)$, called $H^2(T_\Gamma)$, defined as the  closure  in $L^2(T_\Gamma)$  of
$$ E^2(T_\Gamma) : = \{F \in L^2(T_\Gamma) , \, \exists f\in L^2(\Gamma): \, F = df\}.$$
Notice that $d\Delta^{-1}d^* = Id_{E^2(T_\Gamma)}$  (see \cite{Fen2}).  The functional $d\Delta^{-1}d^*$ can be extended to a bounded operator on $H^2(T_\Gamma)$ and 
\begin{equation} \label{Fen4dDeltadstar}
d\Delta^{-1}d^* = Id_{H^2(T_\Gamma)}.
\end{equation}

\noindent Let us  recall Proposition 1.32 in  \cite{Fen2} .

\begin{prop} \label{Fen4danddstar}
 For all $p\in [1,+\infty]$,  the operator $d^*$ is bounded from $L^p(T_\Gamma)$ to $L^p(\Gamma)$.\par
\noindent The operator $d\Delta^{-\frac12}$ is an isometry from $L^2(\Gamma)$ to $H^2(T_\Gamma)$, 
and the operator $\Delta^{-\frac12}d^*$ is an isometry from $H^2(T_\Gamma)$ to $L^2(\Gamma)$.
\end{prop}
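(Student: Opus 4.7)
My plan is to prove the three assertions separately, each being essentially a short computation given the identities already available in the paper. For the $L^p$-boundedness of $d^*$, I would first establish the pointwise inequality $|d^*F(x)| \leq \sqrt{2}\,|F(x,\cdot)|_{T_x}$ for every $x \in \Gamma$, which immediately yields $\|d^*F\|_{L^p(\Gamma)} \leq \sqrt{2}\,\|F\|_{L^p(T_\Gamma)}$ for every $p \in [1,+\infty]$. This pointwise estimate follows from the Cauchy--Schwarz inequality applied to the probability measure $y \mapsto p(x,y)\, m(y)$ (normalized to $1$ by \eqref{Fen4sum=1}):
\[
\Bigl|\sum_{y\sim x} p(x,y) F(x,y) m(y)\Bigr|^2 \leq \sum_{y\sim x} p(x,y) m(y) \cdot \sum_{y\sim x} p(x,y) m(y) |F(x,y)|^2 = 2\,|F(x,\cdot)|_{T_x}^2,
\]
the factor $2$ arising from the $\frac12$ in the definition of $|F(x,\cdot)|_{T_x}$.

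For $d\Delta^{-1/2}\colon L^2(\Gamma) \to H^2(T_\Gamma)$, the relevant input is \eqref{Fen4Katopb}: $\|df\|_{L^2(T_\Gamma)} = \|\Delta^{1/2} f\|_{L^2(\Gamma)}$. Since $\Delta$ is a bounded nonnegative self-adjoint operator on $L^2(\Gamma)$, functional calculus defines $\Delta^{-1/2}$ on $(\Ker \Delta)^\perp$, and for any $f$ in this subspace, applying \eqref{Fen4Katopb} to $g := \Delta^{-1/2} f$ gives $\|d\Delta^{-1/2} f\|_{L^2(T_\Gamma)} = \|\Delta^{1/2} g\|_{L^2(\Gamma)} = \|f\|_{L^2(\Gamma)}$. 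To ensure that the image lies in $H^2(T_\Gamma)$, I would approximate via spectral cutoffs $P_n := \mathbf{1}_{[1/n,2]}(\Delta)$: each $\Delta^{-1/2} P_n f$ belongs to $L^2(\Gamma)$, hence $d\Delta^{-1/2} P_n f \in E^2(T_\Gamma)$, and passing to the $L^2$-limit yields an element of $H^2(T_\Gamma)$. The final statement, that $\Delta^{-1/2} d^*$ is an isometry from $H^2(T_\Gamma)$ onto $L^2(\Gamma)$, then follows directly from \eqref{Fen4dDeltadstar}: the identity $d\Delta^{-1/2} \circ \Delta^{-1/2} d^* = d \Delta^{-1} d^* = \mathrm{Id}_{H^2(T_\Gamma)}$ exhibits $\Delta^{-1/2} d^*$ as the right inverse of the surjective isometry $d\Delta^{-1/2}$, whence it is itself an isometry.

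The only delicate point I foresee is the handling of $\Delta^{-1/2}$ when $\Delta$ is not strictly positive on $L^2(\Gamma)$: one must in principle work modulo $\Ker \Delta$. In the setting of an infinite connected graph, however, one typically has $\Ker \Delta = \{0\}$ (constants are not in $L^2$ when $m(\Gamma) = \infty$), so this is largely a cosmetic issue. Apart from this, the three statements reduce to routine manipulations of the identities \eqref{Fen4Katopb} and \eqref{Fen4dDeltadstar} together with the pointwise Cauchy--Schwarz bound above.
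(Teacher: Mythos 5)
Your proposal is correct. The paper itself gives no proof of this proposition, deferring to \cite[Proposition 1.32]{Fen2}; your argument --- the pointwise Cauchy--Schwarz bound $|d^*F(x)|\leq \sqrt{2}\,|F(x,\cdot)|_{T_x}$ using \eqref{Fen4sum=1} for the $L^p$ statement, the identity \eqref{Fen4Katopb} together with a spectral approximation to land in $H^2(T_\Gamma)$, and \eqref{Fen4dDeltadstar} to transfer the isometry to $\Delta^{-\frac12}d^*$ --- is exactly the standard argument that reference supplies, and the kernel issue you flag is indeed harmless here.
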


\subsection{Metric}

\label{metric}

As said in the introduction, the distance $\d(x,y)$ is defined as the length of the shortest path linking $x$ to $y$.
In our computations, we will suppose that there exists a function $\beta$ (bounded from below by 1 and from above by $B$) such that \eqref{DV} and \eqref{UE} holds on $\Gamma$.
Fix $\rho:=\d^\beta$. Check that we have 
\begin{equation} \label{Crho} \rho(x,z) \leq 2^{B-1} \left[\rho(x,y) + \rho(y,z)\right],\end{equation}
that is $\rho$ is a quasidistance on $\Gamma$. 

The metric used in the sequel will be the one of $\rho:=\d^\beta$.
More precisely, when $k\in \N^*$ and $x\in \Gamma$, the balls $B(x,k)$ denote the sets $\{y\in \Gamma, \, \rho(x,y) < k\}$, the notation $V(x,k)$ is used for $m(B(x,k))$. Besides, the set $C_0(x,k)$ denotes $B(x,2^{B+1})$ and when $j\geq 1$, the sets $C_j(x,k)$ denote  the annuli $B(x,2^{B+j+1} k) \setminus B(x,2^{B+j} k)$. The important point is to remark that
$$\rho(B(x,k),C_j(x,k)) \geq 2^j k.$$
Indeed, for any $y\in B(x,k)$ and $z\in C_j(x,k)$, we have
$$2^{B+j}k \leq \rho(x,z) \leq 2^{B-1} \left[\rho(x,y) + \rho(y,z)\right] \leq 2^{B-1}k + 2^{B-1} \rho(y,z)$$
and thus
$$\rho(y,z) \geq \frac{2^{B+j} - 2^{B-1}}{2^{B-1}} k \geq 2^j k.$$

\subsection{Pointwise and Gaffney estimates for $p_k(x,y)$.}

\begin{prop} \label{higherpointwiseestimate}
Let $(\Gamma, \mu)$ satisfying \eqref{LB}, \eqref{UE} and \eqref{DV}. Then for all $j\in \N$ and all $N\in \N$, there exists $C_{j,N}$ such that 
$$|D(1)^jp_{k-1}(x,y)| \leq \frac{C_{j,N}}{k^j V(x,k)} \left( 1 + \frac{\rho(x,y)}{k} \right)^{-N},$$
where $D(1)$ stands for the operator on sequences:
$$D(1)p_{k-1} = p_k - p_{k-1}.$$
\end{prop}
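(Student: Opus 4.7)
The first observation is that
$$D(1) p_{k-1}(x,y) = p_k(x,y) - p_{k-1}(x,y) = -(I-P)\bigl[p_{k-1}(\cdot,y)\bigr](x),$$
so iteratively $D(1)^j p_{k-1}(x,y) = (-1)^j \Delta^j p_{k-1}(x,y)$, the kernel of $\Delta^j P^{k-1}$ at $(x,y)$. It therefore suffices to prove the stated pointwise bound for $\Delta^j p_{k-1}(x,y)$. The case $j=0$ is precisely \eqref{UE}, so I focus on $j\geq 1$ (with $k\geq 2j+1$; small values of $k$ contribute only to $C_{j,N}$ via Remark \ref{Fen4poweri-p} and \eqref{Fen4Pcont}).

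Splitting the semigroup as $P^{k-1} = P^{k_1} P^{k_2}$ with $k_1 \simeq k_2 \simeq k/2$, one has
$$\Delta^j p_{k-1}(x,y) = (\Delta^j P^{k_1})\bigl[p_{k_2}(\cdot,y)\bigr](x) = \sum_z K_j(x,z)\, p_{k_2}(z,y)\, m(z),$$
where $K_j$ denotes the kernel of $\Delta^j P^{k_1}$. Two ingredients drive the argument: (i) the hypothesis \eqref{LB}, via \eqref{Ana}, yields analyticity of $P$ on $L^2$ and hence, by iteration, $\|\Delta^j P^{k_1}\|_{L^2 \to L^2} \lesssim k^{-j}$; (ii) the hypothesis \eqref{UE}, together with Proposition \ref{Fen4propDV}, gives the $L^2$-mass bound $\|p_{k_2}(\cdot,y)\chi_{C_i(y,k)}\|_{L^2}^2 \lesssim 2^{-iN}/V(y,k)$ for every $N$.

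The core technical step is to upgrade (i) into an $L^2$ Gaffney-type off-diagonal estimate for $\Delta^j P^{k_1}$: for any $E,F \subset \Gamma$ and any $f$ supported on $E$,
$$\|\Delta^j P^{k_1} f\|_{L^2(F)} \leq \frac{C_{j,N}}{k^j}\Bigl(1+\frac{\rho(E,F)}{k}\Bigr)^{-N}\|f\|_{L^2(E)}.$$
This would be obtained from $L^2$-analyticity by a Davies-type conjugation, using polynomial (not exponential) weights adapted to $\rho=\d^\beta$ since the underlying decay is only sub-Gaussian, in the spirit of \cite{Dungey2} and \cite[Section 4]{Fen1}. Given this, I would decompose the sum $\sum_z$ over annuli $C_i(y,k)$, apply Cauchy-Schwarz on each piece (controlling $\|K_j(x,\cdot)\chi_{C_i(y,k)}\|_{L^2}$ via the Gaffney estimate applied to a smeared-out Dirac mass at $x$ decomposed along annuli $C_\ell(x,k)$, and $\|p_{k_2}(\cdot,y)\chi_{C_i(y,k)}\|_{L^2}$ via (ii)), and sum in the annular indices to produce the polynomial decay $(1+\rho(x,y)/k)^{-N}$ together with the analytic prefactor $1/(k^j V(x,k))$. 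The main obstacle is securing this $L^2$ Gaffney estimate with polynomial $\rho$-decay and the sharp prefactor $k^{-j}$ simultaneously in the absence of a Gaussian structure; once it is in hand, the pointwise bound follows by the annular decomposition above.
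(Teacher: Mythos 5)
Your plan---rewriting $D(1)^j p_{k-1}$ as the kernel of $(-\Delta)^j P^{k-1}$, splitting $P^{k-1}=P^{k_1}P^{k_2}$, and combining the $L^2$-analyticity supplied by \eqref{LB} with the off-diagonal $L^2$ bounds supplied by \eqref{UE} and \eqref{DV} through an annular Cauchy--Schwarz argument---is exactly the method the paper invokes, since its own proof consists of the citation to \cite[Theorem 1.1]{Dungey} and \cite[Theorem A.1]{Fen1}. The one step you leave conditional and correctly flag as the main obstacle (a weighted $L^2$ estimate for $\Delta^j P^{k_1}$ carrying simultaneously the $k^{-j}$ gain and arbitrary polynomial $\rho$-decay, which indeed does not follow from naively interpolating the unweighted analyticity bound against the off-diagonal bound for $P^{k}$) is precisely the content of those cited theorems, so your outline matches the paper's proof both in structure and in what it defers to the references.
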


\begin{proof}
The proof of this result can be done with the same method as \cite[Theorem 1.1]{Dungey} (see also \cite[Theorem A.1]{Fen1}).
\end{proof}

We can deduce the following $L^1$-$L^2$ Gaffney estimates.

\begin{cor} \label{Fen4LqLpforDeltaPk}
Let $(\Gamma, \mu)$ satisfying \eqref{LB}, \eqref{UE} and \eqref{DV}. Let $j\in \N$ and $N\in \N$.

If the sets $E,F\subset \Gamma$, $x_0\in \Gamma$ and $k\in \N$ satisfies one of the following condition
\begin{enumerate}
\item $\sup\{\rho(x_0,y), \, y\in F\} \leq 2^B \rho(E,F)$,
\item $\sup\{\rho(x_0,x), \, x\in E\} \leq 2^B \rho(E,F)$,
\item $\sup\{\rho(x_0,y), \, y\in F\} \leq k$,
\item $\sup\{\rho(x_0,x), \, x\in E\} \leq k$,
\end{enumerate}
then for any $f\in L^1(\Gamma)$,
$$\|(k\Delta)^j P^{k-1} [f\1_F] \|_{L^2(E)} \leq \frac{C_{j,N}}{k^j V(x_0,k)} \left( 1 + \frac{\rho(E,F)}{k} \right)^{-N} \|f\|_{L^1},$$
where the constant $C_{j,N}$ depends only on $j,N$ and $\Gamma$.
\end{cor}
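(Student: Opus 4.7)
My plan is to reduce the claim to the pointwise kernel estimate of Proposition \ref{higherpointwiseestimate}. First, since $\Delta P^{k-1} = P^{k-1} - P^k = -D(1)p_{k-1}$ as an identity of kernels (in one variable, using symmetry), iterating shows that the kernel of $(k\Delta)^j P^{k-1}$ is $\pm k^j D(1)^j p_{k-1}(x,y)$. Proposition \ref{higherpointwiseestimate} then yields, for every $N$,
\[
\bigl|[(k\Delta)^j P^{k-1}](x,y)\bigr| \;\leq\; \frac{C_{j,N}}{V(x,k)}\left(1 + \frac{\rho(x,y)}{k}\right)^{-N},
\]
with the $k^j$ factors cancelling.

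Next I would use the standard $L^1\to L^2$ bound for a positive-kernel operator $K$:
\[
\|K(f\1_F)\|_{L^2(E)} \;\leq\; \|f\|_{L^1}\;\sup_{y\in F}\|K(\cdot,y)\1_E\|_{L^2(\Gamma)},
\]
so it suffices to bound the right-hand supremum by $C_{j,N} V(x_0,k)^{-1/2}(1+\rho(E,F)/k)^{-N}$ uniformly in $y\in F$ (I read the stated $V(x_0,k)$ as $\sqrt{V(x_0,k)}$, the natural $L^1\to L^2$ normalization).

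The core of the work is a case analysis over the four hypotheses, whose purpose is to replace $V(x,k)$ by $V(x_0,k)$ and $\rho(x,y)$ by $\rho(E,F)$ inside the pointwise estimate, up to absorbable factors. For comparing volumes I would invoke the doubling property in the polynomial form \eqref{Fen4PDV}; for comparing quasi-distances, the quasi-triangle inequality \eqref{Crho}. Concretely, in case (4) one has $E\subset B(x_0,k)$, so for $x\in E$ doubling gives $V(x,k)\simeq V(x_0,k)$, while \eqref{Crho} yields $\rho(E,F)\leq C(k+\rho(x,y))$, i.e.\ $(1+\rho(E,F)/k)\lesssim (1+\rho(x,y)/k)$. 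Case (3) is symmetric after using symmetry of the kernel in $x,y$ (the volume $V(x,k)$ may then be replaced by $V(y,k)\simeq V(x_0,k)$). In cases (1) and (2), $\rho(E,F)$ is already comparable with $\rho(x_0,y)$ or $\rho(x_0,x)$, so \eqref{Crho} gives $(1+\rho(x,y)/k)\gtrsim (1+\rho(E,F)/k)$, and when switching $V(x,k)$ to $V(x_0,k)$ one pays at most $(1+\rho(x,x_0)/k)^d$ by \eqref{Fen4PDV}, which is absorbed by inflating $N$ by a few units. I expect this bookkeeping to be the only real obstacle.

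Once the uniform pointwise bound $|K(x,y)|\lesssim V(x_0,k)^{-1}(1+\rho(E,F)/k)^{-N}$ has been established for $x\in E$, $y\in F$, it remains to sum in $x\in E$. Under any of the four conditions one checks that $E$ is contained in a ball centred at $x_0$ of radius $O(k+\rho(E,F))$, so \eqref{Fen4PDV} yields $m(E)\lesssim V(x_0,k)(1+\rho(E,F)/k)^d$. Inserting this in $\|K(\cdot,y)\1_E\|_{L^2}^2 \leq \sup_{x\in E}|K(x,y)|^2\cdot m(E)$ and increasing $N$ by $d/2$ produces the stated bound; combined with the $L^1\to L^2$ mapping inequality this concludes the proof.
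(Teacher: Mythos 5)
Your overall strategy (pass to the kernel via $(k\Delta)^jP^{k-1}=(-k)^jD(1)^jP^{k-1}$, invoke Proposition \ref{higherpointwiseestimate}, and apply the Minkowski $L^1\to L^2$ inequality $\|K(f\1_F)\|_{L^2(E)}\leq\|f\|_{L^1}\sup_{y\in F}\|K(\cdot,y)\|_{L^2(E)}$) is sound, and your reading of the right-hand side as $V(x_0,k)^{-1/2}$ with the $k^j$ cancelling is indeed the normalization the paper actually uses later (e.g.\ in the proof of Proposition \ref{Fen4OffDiagonalDecayLbeta}). The paper itself only says the proof is ``similar to \cite[Theorem A.3]{Fen1}'', so there is no detailed argument to match against; but your write-up has one genuine gap.

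The gap is the claim that ``under any of the four conditions $E$ is contained in a ball centred at $x_0$ of radius $O(k+\rho(E,F))$''. This is true for conditions (2) and (4), which localize $E$ near $x_0$, but it is false for conditions (1) and (3), which localize only $F$: there $E$ may be, say, the complement of a large ball around $x_0$, so that $m(E)=+\infty$ and the bound $\|K(\cdot,y)\|_{L^2(E)}^2\leq\sup_{x\in E}|K(x,y)|^2\, m(E)$ gives nothing. (These are exactly the cases the corollary is invoked in, e.g.\ for $E=D_k(E)$ far from a ball $F$ in Proposition \ref{Fen4OffDiagonalDecayLbeta}.) The repair is standard and already appears in the paper's proof of Proposition \ref{GEforPk}: instead of bounding the sum over $x\in E$ by $\sup\times m(E)$, keep the factor $(1+\rho(x,y)/k)^{-2N'}$ inside the sum, split off $(1+\rho(E,F)/k)^{-2N}$ using $\rho(x,y)\geq\rho(E,F)$, and sum the remainder dyadically over the annuli $\{x:\ \rho(x,y)\in[2^ik,2^{i+1}k)\}$, using \eqref{Fen4PDV} to get $\sum_i V(y,2^{i+1}k)\,2^{-i(2N'-2N)}/V(y,k)^2\lesssim 1/V(y,k)$ once $2N'-2N>d$; finally $V(y,k)^{-1}\lesssim V(x_0,k)^{-1}(1+\rho(E,F)/k)^{d}$ by \eqref{Crho} and \eqref{Fen4PDV} under (1) or (3), the extra power being absorbed by enlarging $N'$. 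With this replacement (and your argument kept as is for cases (2) and (4)), the proof is complete.
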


\begin{proof}
The proof of this result is similar to the one of \cite[Theorem A.3]{Fen1}.
\end{proof}

\subsection{Tent spaces}

\label{sectionTent}

In all this section, $(\Gamma, \mu)$ is a weighted graph outfitted with a quasidistance $\rho=\d^\beta$ satisfying \eqref{DV}. 

We state in this section the atomic decomposition the Tent space $T^1(\Gamma)$. The tents spaces and their atomic decomposition were introduced in the case Euclidean by Coifman, Meyer and Stein in \cite{CMS}. However the methods used in \cite{CMS} are not specific to $\R^d$ and the definition of Tent spaces - as well as their properties - can be extended to spaces with the doubling volume property. The case of Riemannian manifolds has been done by Russ in \cite{Russ2}. Some definitions and results in the case of graphs (endowed with a quasidistance) are given here, but the proofs - similar to the ones of \cite{CMS} and \cite{Russ2} - are avoided. They can be found yet in \cite[Section D.3]{FenThesis}.

\begin{defi}
We introduce the following sets in $\Gamma \times \N^*$. If $x\in \Gamma$,
$$\gamma(x) = \{(y,k) \in \Gamma\times \N^*, \, \rho(x,y) < k\},$$
if $F\subset \Gamma$,
$$\check F = \bigcup_{x\in F} \gamma(x)$$
and if $O \subset \Gamma$, 
$$\hat O = \{(y,k) \in \Gamma \times \N^*, \, \rho(y,O^c) \geq k\}.$$
We define the functionals $\mathcal A$ and $\mathcal C$ mapping functions on $\Gamma \times \N^*$ into functions on $\Gamma$ by
$$\mathcal A f(x) = \left( \sum_{(y,k) \in \gamma(x)} \frac{m(y)}{kV(x,k)} |f(y,k)|^2 \right)^\frac{1}{2}$$
and
$$\mathcal C f(x)= \sup_{x\in B} \left(\frac{1}{V(B)} \sum_{(y,k)\in \hat B}\frac{m(y)}{k}|f(y,k)|^2 \right)^\frac{1}{2}.$$
For any $p\in [1,+\infty)$, the tent space $T^p(\Gamma)$ denotes the space of functions $f$ on $\Gamma\times \N^*$ such that $\mathcal A f \in L^p(\Gamma)$.
Moreover, the tent space $T^\infty(\Gamma)$ is the space of functions $f$ on $\Gamma\times \N^*$ such that $\mathcal C f \in L^\infty(\Gamma)$.
The tent space $T^p(\Gamma)$ is equipped with the norm $\|f\|_{T^p} = \|\mathcal A f\|_{L^p}$ (or $\|f\|_{T^\infty} = \|\mathcal C f\|_{L^\infty}$ when $p=\infty$).
\end{defi}

\begin{rmk}
 One has the following equality of sets
$$\check{O^c} = (\hat O)^c.$$
\end{rmk}

\begin{defi}
 A function $A$ defined on $\Gamma \times \N^*$ is a $T^1$-atom if there exists a ball $B$ such that
\begin{enumerate}[(i)]
 \item $A$ is supported in $\hat B$,
 \item $\sum_{(y,k)\in \hat B} \frac{m(y)}{k} |A(y,k)|^2 \leq \frac{1}{V(B)}$.
\end{enumerate}
\end{defi}

\begin{lem}
 There exists a constant $C>0$ such that, for every $T^1$-atom $A$, one has
$$\|A\|_{T^1} \leq C $$
and 
$$\|A\|_{T^2} \leq \frac{C}{V(B)^\frac12}$$
\end{lem}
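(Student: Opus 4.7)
The plan is to treat the $T^2$ bound first and then deduce the $T^1$ bound by Cauchy--Schwarz after localizing the support of $\mathcal{A}A$.

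For the $T^2$ estimate, I would expand the definition of $\mathcal{A}A$ and exchange the order of summation:
\[ \|A\|_{T^2}^2 = \sum_{x\in\Gamma} m(x)\sum_{(y,k)\in\gamma(x)} \frac{m(y)}{kV(x,k)}|A(y,k)|^2 = \sum_{(y,k)} \frac{m(y)}{k}|A(y,k)|^2 \sum_{x\,:\,\rho(x,y)<k}\frac{m(x)}{V(x,k)}. \]
Because $\rho$ satisfies the quasi-triangle inequality \eqref{Crho}, any $x$ with $\rho(x,y)<k$ lies in a $\rho$-ball of controlled radius around $y$, and the power-law doubling \eqref{Fen4PDV} yields $V(x,k)\simeq V(y,k)$; hence the inner sum is bounded by a universal constant. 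Combined with the atomic size condition, this gives
\[ \|A\|_{T^2}^2 \lesssim \sum_{(y,k)\in\hat B}\frac{m(y)}{k}|A(y,k)|^2 \leq \frac{1}{V(B)}, \]
which is the second inequality.

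For the $T^1$ estimate, the main observation is that $\mathcal{A}A$ is supported in a controlled dilate $B^*$ of $B$. Indeed, if $\mathcal{A}A(x)\neq 0$, there exists $(y,k)\in\gamma(x)\cap\hat B$; writing $B=B(x_0,r)$, the condition $(y,k)\in\hat B$ forces $y\in B$ and $k\lesssim r$, and then $\rho(x,y)<k\leq r$ together with \eqref{Crho} places $x$ in some ball $B^*=B(x_0,Cr)$ whose volume is comparable to $V(B)$ by \eqref{Fen4PDV}. Once this is in hand, Cauchy--Schwarz applied over $B^*$ gives
\[ \|A\|_{T^1} = \sum_{x\in B^*} m(x)\,\mathcal{A}A(x) \leq V(B^*)^{1/2}\|A\|_{T^2} \lesssim V(B)^{1/2}\cdot V(B)^{-1/2} = 1. \]

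The only real bookkeeping point is the support localization used in the $T^1$ part: because $\rho$ obeys only a quasi-triangle inequality with constant $2^{B-1}$, the enlargement radius of $B^*$ is larger than $r$ by a fixed constant factor, but this is absorbed by \eqref{Fen4PDV}. The comparison $V(x,k)\simeq V(y,k)$ used in the $T^2$ step rests on the same two ingredients, quasi-triangle plus doubling. Everything else reduces to Fubini and the definition of a $T^1$-atom, so I do not expect any serious obstacle.
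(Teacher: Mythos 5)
The paper itself does not prove this lemma (it refers to \cite{CMS}, \cite{Russ2} and to \cite[Section D.3]{FenThesis}), so I am comparing your proposal with the standard Coifman--Meyer--Stein argument. Your $T^2$ estimate is correct: Fubini plus the comparison $V(x,k)\simeq V(y,k)$ for $\rho(x,y)<k$ (which follows from \eqref{Crho} and \eqref{DV}) reduces everything to the size condition of the atom.

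The $T^1$ part, however, contains a genuine gap in the localization step. You claim that $(y,k)\in\hat B$ with $B=B(x_0,r)$ forces $k\lesssim r$. This is false for the quasidistance $\rho=\d^\beta$ when $\beta$ is not constant: in general one only gets $\rho(y,B^c)\lesssim r^{B}$, because the values of $\rho$ along a path can jump by a factor of order $r^{B-1}$. For instance, on $\Z$ with $\beta(x,y)=1$ for $\d(x,y)\leq n_0$ and $\beta(x,y)=2$ beyond, the ball $B_\rho(0,n_0+1)$ is $[-n_0,n_0]$ while $\rho(0,B^c)=(n_0+1)^2$, so $\hat B$ contains points $(y,k)$ with $k\simeq r^2$. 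Consequently the dilate $B^*$ you construct has radius of order $r^{B}$, the ratio $V(B^*)/V(B)$ is unbounded by \eqref{Fen4PDV}, and the final Cauchy--Schwarz step no longer closes. The correct localization is both simpler and uses no quasi-triangle inequality at all: if $\mathcal A A(x)\neq 0$ there exists $(y,k)\in\gamma(x)\cap\hat B$; by the very definition of $\hat B$, every point of $B^c$ lies at $\rho$-distance at least $k$ from $y$, whereas $\rho(x,y)<k$, so $x$ cannot lie in $B^c$, i.e.\ $x\in B$. (This is precisely the content of the paper's remark $\check{O^c}=(\hat O)^c$.) Hence $\mathcal A A$ is supported in $B$ itself, and Cauchy--Schwarz over $B$ gives $\|A\|_{T^1}\leq V(B)^{1/2}\,\|A\|_{T^2}\lesssim 1$. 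With this replacement your proof is complete.
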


\begin{theo} \label{theoTent}
 \begin{enumerate}[(i)]
  \item The following inequality holds, whenever $f\in T^1(\Gamma)$ and $g\in T^\infty(\Gamma)$:
$$\sum_{(y,k)\in \Gamma \times \N^*} \frac{m(y)}{k} |f(y,k)g(y,k)| \lesssim \sum_{y\in \Gamma} \mathcal Af(x) \mathcal Cf(x).$$
  \item The pairing 
$$\left<f,g\right> = \sum_{(y,k)\in \Gamma \times \N^*} \frac{m(y)}{k} |f(y,k)g(y,k)|$$
realizes $T^\infty(\Gamma)$ as equivalent to the Banach space dual of $T^1(\Gamma)$.
  \item Every element $f\in T^1(\Gamma)$ can be written as $f=\sum \lambda_j a_j$ where $a_j$ are $T^1$-atoms, $\lambda_j \in \R$ and $\sum |\lambda_j| \lesssim \|f\|_{T^1}$.
  \item Moreover, if $f\in T^1(\Gamma) \cap T^2(\Gamma)$, the atomic decomposition can be chosen to be convergent in $T^2(\Gamma)$.
 \end{enumerate}
\end{theo}

\subsection{Interpolation}

\label{Fen4InterpolationSubsection}

Let us recall first a result on $L^p$ boundedness of Calder\`on-Zygmund operators (originally due to Blunck and Kunstmann, see Theorem 1.1 in \cite{BK}, see also Theorem 1.1 in \cite{Auscher2007}).

\begin{defi}
A function $f$ on $\Gamma$ is in $L^{1,\infty}(\Gamma)$ if 
$$\|f\|_{L^{1,\infty}}: = \sup_{\lambda >0} \lambda m(\{x\in \Gamma, \, |f(x)|>\lambda \}) <+\infty.$$
\end{defi}

\begin{theo} \label{Fen4BKtheoremforinterpolation}
For any ball $B$, let $A_B$ be a linear operator in $L^2(\Gamma)$.
Let $T$ be a $L^2$-bounded sublinear operator such that for all balls $B=B(x,k)$ and all functions $f$ supported in $B$
$$\frac{1}{V(x,2^{j}k)^\frac{1}{2}} \left\|T(I-A_B) f \right\|_{L^2(C_j(x,k))} \leq \alpha_j(B) \frac{1}{V(x,k)} \|f\|_{L^1(B)}$$
for all $j\geq 1$ and
$$\frac{1}{V(x,2^{j}k)^\frac{1}{2}} \left\|A_B f \right\|_{L^2(C_j(x,k))} \leq \alpha_j(B) \frac{1}{V(x,k)} \|f\|_{L^1(B)}$$
for all $j\geq 0$.

If the coefficients $\alpha_j(B)$ satisfy
$$\sup_{B=B(x,k) \text{ ball}} \sum_{j\geq 0} \frac{V(x,2^{j+1}k)}{V(x,k)} \alpha_j(B) < +\infty$$
then there exists a constant $C$ such that
$$\|Tf\|_{L^{1,\infty}} \leq C \|f\|_{L^1} \qquad \forall f\in L^2\cap L^1.$$
So by interpolation, for all $p\in (1,2]$, there exists a constant $C=C_p$ such that
$$\|Tf\|_{L^{p}} \leq C_p \|f\|_{L^p} \qquad \forall f\in L^2\cap L^p.$$
\end{theo}

Our second result deals with interpolation of Hardy spaces. We reformulate here some results of \cite{BZ} in our context.

\begin{defi}
 A function $a\in L^2(\Gamma)$ is called an atom if there exist $x\in \Gamma$ and $k\in \N^*$ and a function $b \in L^2(\Gamma)$ supported in $B(x,k)$ such that
\begin{enumerate}[(i)]
 \item $a = (I-P^k) b$,
 \item $\ds \|b\|_{L^2}  = \|b\|_{L^2(B(x,k))} \leq  V(x,k)^{-\frac{1}{2}}$.
\end{enumerate}
We say that $f$ belongs to $E^1_{0}(\Gamma)$ if $f$ admits a finite atomic representation, 
that is if there exist a finite sequence $(\lambda_i)_{i=0..N}$ and a finite sequence $(a_i)_{i=0..N}$ of atoms such that
\begin{equation} \label{Fen4sumf0}
f=\sum_{i=0}^N \lambda_i a_i.
\end{equation}
The space is outfitted with the norm
$$\|f\|_{E^1_{0}} = \inf\left\{ \sum_{j=0}^\infty |\lambda_j|, \ \sum_{j=0}^\infty \lambda_j a_j, \text{ is a finite atomic representation of $f$} \right\}.$$
\end{defi}

\begin{theo} \label{Fen4interpolationH1L2}
Let $(\Gamma,\mu,\rho)$ satisfying \eqref{LB}, \eqref{DV} and \eqref{UE}.
If $T$ is an $L^2(\Gamma)$ bounded linear operator and if there exists $C>0$ such that for all atoms
$$\|Ta\|_{L^1} \leq C,$$
then for all $p\in (1,2]$, there exists a constant $C=C(p)>0$ such that
$$\|Tf\|_p \leq C_p \|f\|_p \qquad \forall f\in L^p\cap L^2.$$
\end{theo}

\noindent The next result is an immediate corollary of Theorem \ref{Fen4interpolationH1L2}.

\begin{cor} \label{Fen4interpolationH1L2bis}
Let $(\Gamma,\mu,\rho)$ satisfying \eqref{LB}, \eqref{DV} and \eqref{UE}. Let $(H^1_0(\Gamma),\|.\|_{H^1_0})$ a normed vector space that satisfies the continuous embedding
$$E^1_0(\Gamma) \subset H^1_0(\Gamma) \subset L^1(\Gamma).$$
If $T$ is an $L^2(\Gamma)$-bounded linear operator that verifies
$$\|Tf\|_{L^1} \lesssim \|f\|_{H^1_0} \qquad \forall f\in H^1_0(\Gamma) \cap L^2(\Gamma),$$
then for all $p\in (1,2]$, there exists a constant $C=C(p)>0$ such that
$$\|Tf\|_p \leq C_p \|f\|_p \qquad \forall f\in L^p\cap L^2.$$
\end{cor}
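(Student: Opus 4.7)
The plan is to reduce the statement directly to Theorem \ref{Fen4interpolationH1L2} by checking that its atomic hypothesis is automatically satisfied under the new assumptions. The only thing to verify is that $\|Ta\|_{L^1} \lesssim 1$ uniformly in atoms $a$.

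First I would observe that if $a$ is an atom in the sense of the definition preceding Theorem \ref{Fen4interpolationH1L2}, then the trivial one-term expansion $a = 1 \cdot a$ is a finite atomic representation, so $\|a\|_{E^1_0(\Gamma)} \leq 1$. By the assumed continuous embedding $E^1_0(\Gamma) \hookrightarrow H^1_0(\Gamma)$, this gives $\|a\|_{H^1_0(\Gamma)} \lesssim 1$ with a constant independent of the atom. Moreover, each atom belongs to $L^2(\Gamma)$ by construction (property (ii) of the definition), so $a \in H^1_0(\Gamma) \cap L^2(\Gamma)$ and the hypothesis of the corollary applies:
\[ \|Ta\|_{L^1(\Gamma)} \lesssim \|a\|_{H^1_0(\Gamma)} \lesssim 1. \]

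Having established this uniform bound on atoms, I would invoke Theorem \ref{Fen4interpolationH1L2} directly: the $L^2$-boundedness of $T$ is one of the hypotheses of the corollary, and the uniform atomic bound $\|Ta\|_{L^1} \leq C$ has just been verified. The conclusion $\|Tf\|_{L^p} \leq C_p \|f\|_{L^p}$ for all $p \in (1,2]$ and all $f \in L^p \cap L^2$ then follows. There is no real obstacle here — the content of the corollary is essentially a repackaging of Theorem \ref{Fen4interpolationH1L2} that makes it usable with the various Hardy spaces constructed later in Section \ref{EqualityHardySpaces}, so that one never has to re-verify the atomic decomposition argument when interpolating against a different definition of $H^1$.
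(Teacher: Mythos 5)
Your proof is correct and follows exactly the route the paper intends: the paper presents this statement as an immediate consequence of Theorem \ref{Fen4interpolationH1L2}, and your verification that each atom $a$ satisfies $\|a\|_{E^1_0}\leq 1$, hence $\|a\|_{H^1_0}\lesssim 1$ by the continuous embedding, hence $\|Ta\|_{L^1}\lesssim 1$, is precisely the missing (and only) step. Nothing further is needed.
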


\begin{rmk}
This corollary will be used for the Hardy space $H^1_0 = H^1(\Gamma)$ that will be defined in Section \ref{EqualityHardySpaces}.
\end{rmk}

\begin{proof} (Theorem \ref{Fen4interpolationH1L2})
Theorem \ref{Fen4interpolationH1L2} can be seen as an application of Theorem 5.3 in \cite{BZ}. This Theorem of Bernicot and Zhao provides all the conclusions of Theorem \ref{Fen4interpolationH1L2} once we checked the assumption
\begin{equation} \label{Fen4LinftyL2forPk} \sup_{y\in B(x,k)} |P^k h(y)| \lesssim C_M \inf_{z\in B(x,k)}\left[\mathcal M(|h|^2)(z)\right]^\frac12,\end{equation}
where $\mathcal M$ stands for the uncentered maximal function of Littlewood-paley defined by
$$\mathcal M f(x) = \sup_{B\ni x} \sum_{y\in B} |f(y)| m(y).$$

Yet, with Corollary \ref{Fen4LqLpforDeltaPk}, there holds for any $z\in C_0(x,k) \supset B(x,k)$,
\begin{equation}\label{PkLinftyL2}\begin{split}
   \sup_{y\in B(x,k)} |P^k h(y)| &  \leq \sum_{j\geq 0} \|P^k[h\1_{C_j(x,k)}]\|_{L^\infty(B(x,k))} \\
& \lesssim \sum_{j\geq 0} \sup{y \in B(x,k)} \sum_{z\in C_j(x,k)} p_k(y,z) h(z) m(z)\\
& \lesssim \sum_{j\geq 0} \|h\|_{L^2(C_j(x,k))} \sup{y \in B(x,k)} \|p_k(x,.)\|_{L^2(C_j(x,k))} \\
& \lesssim \sum_{j\geq 0} \|h\|_{L^2(B(x,2^{B+j+1}k))} \sup{y \in B(x,k)} \|p_k(x,.)\|_{L^2(C_j(x,k))} \\
  \end{split}\end{equation}
If $j=0$, one has for any $y\in B(x,k)$, one has then with \eqref{DV}
\[\begin{split}
\sum_{z\in C_0(x,k)} |p_k(y,z)|^2 m(z) & \lesssim \sum_{z\in C_0(x,k)} \frac{1}{V(x,k)^2} m(z) \lesssim \frac{1}{V(x,2^{B+1}k)}.
\end{split}\]
And if $j\geq 1$, $\rho(B(x,k), C_j(x,k)) \gtrsim 2^j$ and with Proposition \ref{Fen4propDV}
\[\begin{split}
\sum_{z\in C_j(x,k)} |p_k(y,z)|^2 m(z) & \lesssim \sum_{z\in C_j(x,k)} \frac{1}{V(x,k)^2} 2^{-2j(d+1)} m(z) \lesssim \frac{2^{-2j}}{V(x,2^{B+j+1}k)}.
\end{split}\]
Reinjecting these estimates on $\ds \sup{y \in B(x,k)}\|p_k(x,.)\|_{L^2(C_j(x,k))}$ in \eqref{PkLinftyL2}, one has
\[\begin{split}
   \sup_{y\in B(x,k)} |P^k h(y)|
& \lesssim \sum_{j\geq 0} \|h\|_{L^2(B(x,2^{B+j+1}k))} \frac{2^{-2j}}{V(x,2^{B+j+1}k)}  \\
& \lesssim \sum_{j\geq 0} 2^{-2j} \left( \mathcal M(|h|^2)(x) \right)^\frac12 \\
& \lesssim \left( \mathcal M(|h|^2)(x) \right)^\frac12.
  \end{split}\]
\end{proof}

\section{Off-diagonal estimates}

\label{Estimates}

In this section, $(\Gamma, \mu)$ is a weighted graph as defined in Section \ref{defgraphs} verifying \eqref{LB}. We assume that there exists a function $\beta$ bounded from below by 1 and from above by $B$ such that $\Gamma$ satisfies \eqref{DV} and \eqref{UE}. Under this circumstances, $\rho$ denotes the quasidistance $\d^\beta$ and the metric considered is the one of $\rho$ (see Section \ref{metric}).

\subsection{Gaffney estimates, first results}

\begin{defi}
We say that a family of operators $(A_k)_{k}$ satisfies  $L^p$ Gaffney estimates if
for any $N\in \N$, there exist $C_N$ such that, for any sets $E,F\subset \Gamma$ and any  function  $f\in L^p(\Gamma,m)$,
\begin{equation} \label{Fen4DGEdef} \|A_k[f\1_{F}]\|_{L^p(E)} \leq C_N \left(1+ \frac{\rho(E,F)}{k}\right)^{-N} \|f\|_p.\end{equation}
\end{defi}
 \noindent It is plain to observe that \eqref{Fen4DGEdef} is equivalent to
$$
\|A_k(f)\|_{L^p(E)} \leq C_N \left(1+\frac{\rho(E,F)}{k}\right)^{-N} \|f\|_p
$$
whenever $f$ is supported in $F$.  \par

\begin{prop} \label{GEforPk}
For any $j\in \N$ and any $p\in [1,+\infty]$, the family $([k\Delta]^jP^{k-1})_{k\in \N^*}$ satisfies $L^p$ Gaffney estimates.
\end{prop}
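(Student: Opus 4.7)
The plan is to deduce Gaffney estimates from the pointwise kernel bound of Proposition \ref{higherpointwiseestimate}, and then propagate them to all $p$ by duality and interpolation. First I would identify the kernel of $[k\Delta]^j P^{k-1}$: since $(I-P)P^{k-1}$ has kernel $p_{k-1}-p_k = -D(1)p_{k-1}$, iteration gives that the kernel of $[k\Delta]^j P^{k-1}$ equals $(-k)^j D(1)^j p_{k-1}(x,y)$. Proposition \ref{higherpointwiseestimate} then yields the pointwise bound
$$\bigl|\text{ker}([k\Delta]^j P^{k-1})(x,y)\bigr| \leq \frac{C_N}{V(x,k)}\left(1+\frac{\rho(x,y)}{k}\right)^{-N}$$
for any $N\in\N$.

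Next, I would prove the $L^\infty$ Gaffney estimate. For $f$ supported in $F$ and $x\in E$, use $\rho(x,y)\geq \rho(E,F)$ for $y\in F$ to split out a factor $(1+\rho(E,F)/k)^{-N}$, then decompose $F$ into annuli $C_l(x,k)$ and use Proposition \ref{Fen4propDV} to estimate
$$\sum_{y\in\Gamma} \frac{1}{V(x,k)}\left(1+\frac{\rho(x,y)}{k}\right)^{-N'} m(y) \lesssim \sum_{l\geq 0} 2^{-l(N'-d)} \lesssim 1,$$
provided $N'>d$. Choosing $N$ large enough in the kernel bound absorbs both the annuli sum and leaves a factor $(1+\rho(E,F)/k)^{-N}$ multiplying $\|f\|_\infty$, which gives the $L^\infty$ Gaffney estimate.

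For $L^1$ Gaffney, I would invoke self-adjointness: the kernel of $[k\Delta]^j P^{k-1}$ is symmetric by \eqref{Fen4symmetry}, so duality against $L^\infty(E)$ yields
$$\bigl\|[k\Delta]^j P^{k-1}[f\1_F]\bigr\|_{L^1(E)} = \sup_{\|g\|_\infty\leq 1} \bigl|\langle f\1_F, [k\Delta]^j P^{k-1}[g\1_E]\rangle\bigr| \leq \bigl\|[k\Delta]^j P^{k-1}[g\1_E]\bigr\|_{L^\infty(F)} \|f\|_{L^1(F)},$$
and the $L^\infty$ Gaffney estimate applied with roles of $E$ and $F$ swapped gives the desired decay. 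Finally, for $p\in(1,\infty)$, Riesz--Thorin interpolation between the two endpoint Gaffney estimates (with the common polynomial factor $(1+\rho(E,F)/k)^{-N}$) produces the $L^p$ version uniformly in $k$.

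The only real obstacle is the annulus bookkeeping in the $L^\infty$ step — making sure the doubling exponent $d$ from \eqref{Fen4PDV} is beaten by a suitably large choice of $N$ — which is routine. The rest of the argument is essentially a three-line consequence of the pointwise kernel estimate together with symmetry and interpolation.
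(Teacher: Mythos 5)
Your proposal is correct and follows essentially the same route as the paper: both rest on the pointwise bound of Proposition \ref{higherpointwiseestimate}, establish the endpoint Gaffney estimates at $p=1$ and $p=\infty$ via an annulus decomposition controlled by \eqref{Fen4PDV}, and conclude by interpolation. The only (harmless) variation is that you obtain the $L^1$ endpoint by duality from the $L^\infty$ one using the symmetry of the kernel, whereas the paper carries out the $p=1$ summation directly.
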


\begin{proof}
We will prove the cases $p=1$ and $p=+\infty$. The  conclusion  can be then deduced from these endpoint estimates  by interpolation.

Fix $N\in \N$. Let $E,F\subset \Gamma$ and $f\in L^1(\Gamma)$ and $k\geq 1$.
Since $P^k$ is uniformly bounded on $L^1(\Gamma)$ and $L^\infty(\Gamma)$, we can assume without loss of generality that $\rho(E,F) \geq k$ and $N\geq 1$.

We begin with $p=1$. 
\[\begin{split}
\|[k\Delta]^jP^{k-1}(f\1_F)\|_{L^1(E)} & = \sum_{x\in E} m(x) \left| \sum_{z\in F} D(1)^jp_{k-1}(x,z) f(z) m(z) \right| \\
& \leq \sum_{z\in F} |f(z)| m(z) \sum_{x\in E} |D(1)^jp_{k-1}(x,z)| m(x) \\
& \lesssim \sum_{z\in F}   |f(z)| m(z) \sum_{x\in E} \frac{m(x)}{V(z,k)}\left(1+ \frac{\rho(x,z)}{k}\right)^{-N-d} \\
& \quad \leq \sum_{z\in F}   |f(z)| m(z) \sum_{j\geq 0} \sum_{ \frac{\rho(z,x)}{\rho(z,E)} \in [2^j, 2^{j+1})} \frac{m(x)}{V(z,k)} \left(1+ \frac{2^j\rho(z,E)}{k}\right)^{-N-d} \\
& \quad \leq \sum_{z\in F}   |f(z)| m(z) \sum_{j\geq 0} \frac{V(z,2^{j+1}\rho(z,E))}{V(z,k)} \left(\frac{2^j\rho(z,E)}{k}\right)^{-N-d} \\ 
& \lesssim \sum_{z\in F}  |f(z)| m(z) \sum_{j\geq 0} \left( 1+ \frac{2^j\rho(z,E)}{k}\right)^{d}\left(1+ \frac{2^j\rho(z,E)}{k}\right)^{-N-d}\\
& \lesssim \sum_{z\in F}  |f(z)| m(z) \sum_{j\geq 0} \left(\frac{2^j\rho(F,E)}{k}\right)^{-N} \\
& \lesssim \|f\|_{L^1} \left(\frac{\rho(F,E)}{k}\right)^{-N} \sum_{j\geq 0} 2^{-jN} \\
& \lesssim  \|f\|_{L^1} \left(1+\frac{\rho(F,E)}{k}\right)^{-N},  \end{split}\]
where the third line holds thanks to Proposition \ref{higherpointwiseestimate}, the sixth one is a consequence of \eqref{Fen4PDV} and the last but one because $\rho(E,F) \geq k$ and $N\geq 1$. 

We turn to the case $p=+\infty$. One has for all $x\in E$,
\[\begin{split}
   |[k\Delta]^jP^{k-1}(f\1_F)(x)| & \lesssim \frac{1}{V(x,k)} \sum_{z\in F} |f(z)| m(z) \left(1+ \frac{\rho(x,z)}{k}\right)^{-N-d} \\
& \quad \leq \|f\|_{L^\infty} \sum_{j\geq 0} \sum_{ 2^j\rho(z,E)\leq \rho(x,z) <2^{j+1}\rho(z,E)} \frac{m(z)}{V(x,k)}\left(1+ \frac{\rho(x,F)}{k}\right)^{-N-d}\\
& \lesssim \|f\|_{L^\infty} \left(1+\frac{\rho(F,E)}{k}\right)^{-N},
  \end{split}\]
where the first line holds because of Proposition \ref{higherpointwiseestimate} and  the last line is obtained as in the case $p=1$. 
\end{proof}

\begin{cor} \label{Fen4GEforResolvant}
The family $(I-(I+s\Delta)^{-1})_{s\in \R^*_+}$ satisfies $L^2$ Gaffney estimates.
\end{cor}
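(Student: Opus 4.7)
The strategy is to expand the resolvent as a series in the Markov semigroup and reduce the estimate to the Gaffney estimates already established for $\Delta P^k$ in Proposition \ref{GEforPk}.

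First, I would set $\alpha = s/(1+s) \in (0,1)$ and write
$$(I+s\Delta)^{-1} = ((1+s)I - sP)^{-1} = (1-\alpha)\sum_{k=0}^\infty \alpha^k P^k,$$
the series converging in the operator norm on $L^2(\Gamma)$. Using the telescoping identity $I-P^k = \sum_{j=0}^{k-1}\Delta P^j$ and interchanging the order of summation gives the clean representation
$$I - (I+s\Delta)^{-1} \;=\; (1-\alpha)\sum_{k=0}^\infty \alpha^k (I - P^k) \;=\; \alpha \sum_{j=0}^\infty \alpha^j \Delta P^j.$$

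Next, for $f \in L^2(\Gamma)$ supported in $F$, I would apply Proposition \ref{GEforPk} (with exponent $j=1$, after an index shift) to each term to get, for any $N \in \N$,
$$\|\Delta P^j[f\1_F]\|_{L^2(E)} \;\lesssim_N\; \frac{1}{j+1}\left(1+\frac{\rho(E,F)}{j+1}\right)^{-N}\|f\|_{L^2}.$$
It remains to show that
$$\alpha \sum_{j=0}^\infty \frac{\alpha^j}{j+1}\left(1+\frac{\rho(E,F)}{j+1}\right)^{-N} \;\lesssim_{N'}\; \left(1+\frac{\rho(E,F)}{s}\right)^{-N'}.$$
Since $I-(I+s\Delta)^{-1}$ is contractive on $L^2$ by spectral theory, I can assume $\rho(E,F) \geq s$. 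I would then split the sum at $j \approx s$. For the range $j+1 \leq s$, the factor $(1+\rho(E,F)/(j+1))^{-N}$ is dominated by $(1+\rho(E,F)/s)^{-N}$, and the harmless-looking $\sum_{j \leq s} 1/(j+1) \sim \log s$ is killed by peeling off one factor $(1+\rho(E,F)/(j+1))^{-1} \leq (j+1)/\rho(E,F)$, at the price of replacing $N$ by $N+1$. For $j+1 > s$, the geometric decay $\alpha^j \leq (1-1/(1+s))^j \lesssim e^{-j/(1+s)}$ is by itself enough, as the mass of the tail $\sum_{j \geq s} \alpha^j/(j+1)$ is $O(1)$ and can be combined with the polynomial Gaffney factor to produce the desired decay in $\rho(E,F)/s$.

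The main technical annoyance is the logarithm in the short-range regime $j \leq s$, which is why one has to exploit one power of the polynomial decay to counterbalance it; the long-range regime is handled automatically by the geometric weight $\alpha^j$, which is concentrated around $j \sim s$.
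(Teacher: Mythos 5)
Your argument is correct and follows essentially the same route as the paper: expand the resolvent as a geometric series in $P$, apply the Gaffney estimates of Proposition \ref{GEforPk} termwise, and split the sum at $k\sim s$, using the near-exponential decay of $\left(\tfrac{s}{1+s}\right)^k$ (the paper's Lemma \ref{expdecay}) to absorb the tail. The only cosmetic difference is that the paper writes $I-(I+s\Delta)^{-1}=s\Delta(I+s\Delta)^{-1}=\tfrac{s}{1+s}\sum_k \left(\tfrac{s}{1+s}\right)^k \Delta P^k$ but then estimates via $\|P^k f\|_{L^2(E)}$ with the prefactor $\tfrac{1}{1+s}$, which makes the short-range sum $O(1)$ directly and avoids the logarithm you have to kill by sacrificing one power of the polynomial decay.
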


To prove this result, we need the following technical lemma, whose proof can be found on \cite[Lemma B.1]{Fen2}

\begin{lem} \label{expdecay}
For all $m \in \N$, there exists $C_m$ such that for all $s\geq 0$ and $k\in \N$, one has 
 $$\left( \frac{s}{1+s}\right)^k \leq C_m \left(1+ \frac{1+k}{1+s}\right)^{-m}.$$
\end{lem}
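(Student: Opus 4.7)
The plan is to reduce the inequality to a one-variable statement by a clever substitution and then rely on the fact that polynomials decay slower than exponentials.

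First, I would rewrite $\frac{s}{1+s} = 1 - \frac{1}{1+s}$ and apply the elementary inequality $1-u\leq e^{-u}$ valid for $u\in[0,1]$, with $u=\frac{1}{1+s}$, to obtain
\[
\left(\frac{s}{1+s}\right)^k \leq \exp\!\left(-\frac{k}{1+s}\right).
\]
This converts the algebraic decay factor into a genuine exponential that will dominate any polynomial.

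Next, set $x := \frac{k}{1+s}\geq 0$. Then
\[
\frac{1+k}{1+s} \;=\; x + \frac{1}{1+s} \;\leq\; x+1,
\]
since $s\geq 0$, so
\[
1 + \frac{1+k}{1+s} \;\leq\; 2+x.
\]
Combining these two observations, I get
\[
\left(\frac{s}{1+s}\right)^k \left(1+\frac{1+k}{1+s}\right)^m \;\leq\; e^{-x}(2+x)^m.
\]
The function $x\mapsto e^{-x}(2+x)^m$ is continuous on $[0,\infty)$ and tends to $0$ as $x\to\infty$, hence it attains a finite maximum $C_m$ on $[0,\infty)$ (depending only on $m$). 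Dividing by $(1+(1+k)/(1+s))^m$ yields the claim.

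There is no real obstacle here beyond choosing the right substitution; the only thing to be careful about is that the reduction works uniformly in both variables $s\geq 0$ and $k\in\mathbb{N}$ (including the boundary cases $k=0$ or $s=0$, where one checks directly that $e^{-x}(2+x)^m$ at $x=0$ gives the bound $2^m$). The explicit constant can be taken to be $C_m = \sup_{x\geq 0} e^{-x}(2+x)^m$, which is finite and depends only on $m$.
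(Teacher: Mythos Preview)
Your argument is correct. The substitution $x=k/(1+s)$ together with $1-u\le e^{-u}$ reduces everything to the boundedness of $e^{-x}(2+x)^m$ on $[0,\infty)$, and the boundary cases $k=0$, $s=0$ cause no trouble.

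There is nothing to compare against here: the paper does not supply its own proof of this lemma but simply refers to \cite[Lemma B.1]{Fen2}. Your self-contained argument is exactly the kind of elementary verification one expects for such a statement, and it makes the article more self-sufficient.
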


\begin{proof} (Corollary \ref{Fen4GEforResolvant})

Since $s\Delta(I+s\Delta)^{-1} = I - (I+s\Delta)^{-1}$, it suffices to show the Davies-Gaffney estimates for $(I+s\Delta)^{-1}$. The  $L^2$-functional calculus provides the identity
\begin{equation} \begin{split}
   (I+s\Delta)^{-1} f&  = \frac{1}{1+s} \left(I-\frac{s}{1+s}P \right)^{-1}f \\
& =  \sum_{k=0}^{+\infty} \frac{1}{1+s}\left(\frac{s}{1+s}\right)^k P^kf,
  \end{split}\end{equation}
where the convergence  holds  in $L^2(\Gamma)$.

Let $N\in \N$, $s\in \N^*$ and $E,F\subset \Gamma$.
Since the operator $(I+s\Delta)^{-1} f$ is uniformly bounded on $s>0$, we can assume without loss of generality that $N\geq 1$ and $\rho(E,F) > s$. Let $f$ be a function supported in $F$. Then, one has with the Gaffney-Davies estimates provided by Proposition \ref{GEforPk}:
\[\begin{split}
   \|(I+s\Delta)^{-1} f\|_{L^2(E)} 
& \lesssim \sum_{k=0}^{+\infty}  \frac{1}{1+s}  \left(\frac{s}{1+s}\right)^k \|P^kf\|_{L^2(E)} \\
& \lesssim \sum_{k=0}^{+\infty}  \frac{1}{1+s} \left(\frac{s}{1+s}\right)^k \left(1+\frac{\rho(E,F)}{1+k} \right)^{-N} \|f\|_{L^2(F)} \\
& \lesssim \|f\|_{L^2(F)} \left[ \sum_{k=0}^{s} \frac{1}{1+s} \left(1+\frac{1+k}{1+s} \right)^{-N}  \left(1+\frac{\rho(E,F)}{1+k} \right)^{-N} \right. \\
& \qquad + \left. \sum_{k=s}^{+\infty} \frac{1+s}{(1+k)^2}  \left(1+\frac{1+k}{1+s} \right)^{-N}  \left(1+\frac{\rho(E,F)}{1+k} \right)^{-N} \right] \\
& \lesssim \|f\|_{L^2(F)} \left(1+\frac{\rho(E,F)}{1+s} \right)^{-N} \left[ \sum_{k=0}^{s} \frac{1}{1+s}  + \sum_{k=s}^{+\infty} \frac{1+s}{(1+k)^2} \right] \\
& \lesssim \|f\|_{L^2(F)} \left(1+\frac{\rho(E,F)}{1+s} \right)^{-N}.\\
  \end{split}\]
 where the third line is a consequence of Lemma \ref{expdecay}.
\end{proof}

The next result proves that the $\epsilon$-molecules (defined later on Section \ref{EqualityHardySpaces}) are uniformly $L^1$ bounded. 

\begin{cor} \label{Fen4BoundedMolecules0}
Let $\epsilon>0$. If $b \in L^2(\Gamma)$ satisfying, for some ball $B=B(x,k)$,
\begin{equation} \label{estimateonb}
\|b\|_{L^2(C_j(x,k)} \leq \frac{2^{-j\epsilon}}{V(2^jB)^\frac12},
\end{equation}
then
$$\|[I-(I+k\Delta)^{-1}] b\|_{L^1} \leq C_\epsilon,$$
where the constant $C_\epsilon$ depends only on $\epsilon$ and $\Gamma$.
\end{cor}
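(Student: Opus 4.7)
The plan is to exploit the decay of $b$ on annuli around $B$ together with the $L^2$ Gaffney estimates for $I-(I+k\Delta)^{-1}$ established in Corollary \ref{Fen4GEforResolvant}. The natural strategy is a double annular decomposition.

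First I would split $b = \sum_{j \geq 0} b_j$ where $b_j := b \1_{C_j(x,k)}$ (with the convention that $C_0$ denotes the ``inner'' ball $B(x,2^{B+1})$), so the hypothesis reads $\|b_j\|_{L^2} \leq 2^{-j\epsilon} V(2^j B)^{-1/2}$. Then I would cut the target space similarly and write
\[
\bigl\|[I-(I+k\Delta)^{-1}]b\bigr\|_{L^1} \leq \sum_{i \geq 0} \sum_{j \geq 0} \bigl\|[I-(I+k\Delta)^{-1}]b_j\bigr\|_{L^1(C_i(x,k))}.
\]
By Cauchy--Schwarz each term is controlled by $V(C_i(x,k))^{1/2} \|[I-(I+k\Delta)^{-1}]b_j\|_{L^2(C_i(x,k))}$, and $V(C_i(x,k)) \lesssim V(x, 2^{B+i+1}k) \lesssim V(2^i B)$.

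Next I would estimate the $L^2$-norm in two regimes. When $|i-j| \leq 2$, I simply invoke the uniform $L^2$-boundedness of $I-(I+k\Delta)^{-1}$ (its spectral norm is $\leq 1$) to get the bound $\|b_j\|_{L^2} \leq 2^{-j\epsilon} V(2^j B)^{-1/2}$. When $|i-j| \geq 3$, the geometry described in Section \ref{metric} ensures $\rho(C_i(x,k), C_j(x,k)) \gtrsim 2^{\max(i,j)} k$, so Corollary \ref{Fen4GEforResolvant} (applied with a large $N$ to be fixed) yields
\[
\bigl\|[I-(I+k\Delta)^{-1}]b_j\bigr\|_{L^2(C_i(x,k))} \lesssim 2^{-N\max(i,j)} \frac{2^{-j\epsilon}}{V(2^j B)^{1/2}}.
\]
Combining with Cauchy--Schwarz and using the polynomial doubling bound \eqref{Fen4PDV}, namely $V(2^i B)^{1/2}/V(2^j B)^{1/2} \lesssim 2^{(i-j)_+ d/2}$, the generic term is dominated (up to constants) by
\[
2^{-j\epsilon} \cdot 2^{(i-j)_+ d/2} \cdot 2^{-N \max(i,j)} \cdot \mathbf{1}_{|i-j| \geq 3} + 2^{-j\epsilon} \cdot \mathbf{1}_{|i-j| \leq 2}.
\]

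Finally, choosing $N > d/2 + 1$ makes the double sum over $i,j \geq 0$ absolutely convergent (the diagonal part gives $\sum_j 2^{-j\epsilon}$, while the off-diagonal part is controlled by a geometric sum). The resulting bound depends only on $\epsilon$ and on $\Gamma$, giving the claim. The only mildly delicate bookkeeping step is treating the regime $i > j$, where one must absorb the $V(2^i B)^{1/2}/V(2^j B)^{1/2}$ blow-up against the Gaffney decay $2^{-Ni}$; this is precisely why the freedom to choose $N$ large in Corollary \ref{Fen4GEforResolvant} is essential.
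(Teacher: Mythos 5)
Your proof is correct and follows essentially the same route as the paper's: a double annular decomposition, Cauchy--Schwarz with the volume factor $V(2^iB)^{1/2}$, the Gaffney estimates of Corollary \ref{Fen4GEforResolvant} with large $N$ in the off-diagonal regime and uniform $L^2$-boundedness near the diagonal, closed by the polynomial doubling bound \eqref{Fen4PDV}. The only bookkeeping difference is that the separation $\rho(C_i(x,k),C_j(x,k))\gtrsim 2^{\max(i,j)}k$ holds for $|i-j|\geq B+1$ (not $|i-j|\geq 3$) because of the quasi-distance constant $2^{B-1}$ in \eqref{Crho}, but replacing your threshold by $B+1$ changes nothing in the convergence of the sums.
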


\begin{proof}
Let $x\in \Gamma$ and $k\in \N^*$. Set $B=B(x,k)$ and let $b$ satisfy \eqref{estimateonb}. Define 
$$a = [I-(I+k\Delta)^{-1}]b$$

Corollary \ref{Fen4GEforResolvant} yields that the family $(I-(I+k\Delta)^{-1})_{k}$ satisfies $L^2$ Gaffney estimates. 
Moreover, check that $\rho(C_j(x,k),C_i(x,k)) \gtrsim 2^{\max\{i,j\}} k$  if $|j-i| \geq  B+1$.
Thus, 
\[\begin{split}
   \left\| a  \right\|_1& \leq \sum_{j\geq 0} V(x,2^jk)^{\frac 12} \left\| [I-(I+s\Delta)^{-1}] b \right\|_{L^q(C_j(x,k))} \\
& \lesssim \sum_{i \geq 0} \sum_{j\geq 0} V(x,2^jk)^{\frac 12} \left\| [I-(I+s\Delta)^{-1}] [b\1_{C_i(x,k)}] \right\|_{L^q(C_j(x,k))} \\
& \lesssim  \sum_{|i-j|\geq B+1} V(x,2^jk)^{\frac 12} 2^{-(d+1)j} \left\|b\right\|_{L^2(C_i(x,k))} 
+ \sum_{|i-j|< B+1} V(x,2^jk)^{\frac 12}  \left\|b\right\|_{L^2(C_i(x,k))} \\
& \lesssim \sum_{|i-j|\geq B+1} \left(\frac{V(x,2^jk)}{V(x,2^ik)}\right)^{\frac 12} 2^{-(d+1)j}  2^{-i\epsilon}
+ \sum_{|i-j|< B+1} \left(\frac{V(x,2^jk)}{V(x,2^ik)}\right)^{\frac 12} 2^{-i\epsilon}  \\
& \lesssim \sum_{|i-j|\geq B+1} 2^{jd/2} 2^{-(d+1)j} 2^{-i\epsilon} + \sum_{|i-j|< B+1} 2^{-i\epsilon} \\
& \lesssim 1.
  \end{split}\] 
\end{proof}

\subsection{Gaffney estimates for the gradient} \label{Fen4gradgaffney}

The following result is based on an argument used by Stein in \cite{Stein1} to prove, for $p\in (1,2)$, the $L^p(G)$-boundedness of a vertical Littlewood-Paley functional where $G$ is a Lie group. It has been adapted to the case of graphs by Dungey in \cite{Dungey2} and by the author in \cite{Fen1}.

\begin{prop} \label{Fen4GEforNablaPk}
For any $p\in (1,2)$, the family $(\sqrt{k} \nabla P^{k-1})_{k\in \N^*}$ satisfies $L^p$ Gaffney estimates.
\end{prop}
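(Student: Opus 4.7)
We follow the Stein-type argument of \cite[Chapter~2, Lemma~2]{Stein1}, as adapted to the discrete setting in \cite{Dungey2, Fen1}. Fix $p\in(1,2)$, $N\in\N$, disjoint $E,F\subset\Gamma$, $k\in\N^*$, and $f\geq 0$ supported in $F$ (the sign reduction being permissible by the sublinearity of $\nabla$). Set $g:=P^{k-2}f\geq 0$, so that $Pg=P^{k-1}f=:u$.

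The pointwise heart of the argument is the discrete Stein--Bochner inequality: for $p\in(1,2)$ and $g\geq 0$,
$$|\nabla Pg(x)|^2 \;\leq\; C_p\,(Pg(x))^{2-p}\,\bigl[P(g^p)(x)-(Pg(x))^p\bigr],$$
the bracketed factor being nonnegative by Jensen's inequality. This is the graph analogue of the Bochner remainder exploited by Stein and established in \cite[Section~4]{Fen1} and in \cite{Dungey2}. Raising to the $p/2$-th power, multiplying by $k^{p/2}$, summing over $x\in E$ with weight $m(x)$, and applying Hölder's inequality with exponents $2/(2-p)$ and $2/p$ gives
$$\|\sqrt{k}\,\nabla u\|_{L^p(E)}^p \;\leq\; C_p^{p/2}\,\|u\|_{L^p(E)}^{p(2-p)/2}\,\Bigl(k\sum_{x\in E}\bigl[P(g^p)-(Pg)^p\bigr](x)\,m(x)\Bigr)^{p/2}.$$

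The first factor $\|P^{k-1}f\|_{L^p(E)}^{p(2-p)/2}$ is handled directly by the $L^p$ Gaffney estimate for $P^{k-1}$, obtained by interpolating the $L^1$ and $L^\infty$ endpoints of Proposition \ref{GEforPk}: it is bounded by $C_M(1+\rho(E,F)/k)^{-M}\|f\|_p^{p(2-p)/2}$ for any $M$. The second factor rewrites as
$$k\,\langle P\mathbf{1}_E,\,g^p\rangle \;-\; k\,\langle \mathbf{1}_E,\,u^p\rangle,$$
a difference of two nonnegative quantities of comparable size. Exploiting the Bochner cancellation between them (rather than estimating each separately), together with Jensen's inequality, the pointwise kernel estimates of Proposition \ref{higherpointwiseestimate}, and the Gaffney bounds on $k\Delta P^{k-1}$ from Proposition \ref{GEforPk}, one extracts a factor $\|f\|_p^p$ times an arbitrarily large polynomial decay $(1+\rho(E,F)/k)^{-M'}$. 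Choosing $M,M'$ with $\tfrac{2-p}{2}M+\tfrac{p}{2}M'\geq N$ then combines the two estimates into the desired Gaffney bound.

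\textbf{Main obstacle.} The delicate step is the second factor: $P(g^p)-(Pg)^p$ is a one-step \emph{variance}, nonnegative but not the action of a Gaffney-controlled operator on $f$, and a naive bound (for instance dropping $(Pg)^p$) leaves an extra factor of $k$ that no decay in $\rho(E,F)/k$ can absorb. Extracting the cancellation between $P(g^p)$ and $(Pg)^p$---so as to produce effectively a factor $|\nabla g|^2$ rather than a sum of $g^p$'s, and then to iterate the kernel/gradient bounds to carry the decay all the way back to $f$---is precisely where the Stein--Bochner mechanism of \cite{Fen1, Dungey2} intervenes, and is the essential technical step inherited from the continuous Lie-group proof in \cite{Stein1}.
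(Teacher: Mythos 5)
Your overall architecture is the right one and matches the paper's: Stein's inequality in the discrete form of \cite{Dungey2} and \cite[Section 4]{Fen1}, the H\"older split into $\|P^{k-1}f\|_{L^p(E)}^{p(2-p)/2}$ times the $p/2$-power of a mass term, and the Gaffney estimates of Proposition \ref{GEforPk} applied to the first factor. Two small discrete-setting caveats: the pointwise Stein--Bochner inequality is only available with a neighbourhood average on the right, $|\nabla P^{k-1}f|^2 \lesssim A N_p(P^{k-1}f)$ with $Af(x)=\sum_{y\sim x}f(y)$ (the chain rule fails on graphs), which is why the paper has to work on the enlarged set $E_{+1}$; and the variance is taken of $u_k=P^{k-1}f$ itself, i.e. $J_kf = P[(u_k)^p]-(Pu_k)^p$, rather than of $P^{k-2}f$.

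The genuine gap is in your treatment of the second factor. You declare that the delicate step is to extract a spatial decay $(1+\rho(E,F)/k)^{-M'}$ from $k\sum_{x\in E}\bigl[P(g^p)-(Pg)^p\bigr]m(x)$ by exploiting the cancellation ``so as to produce effectively a factor $|\nabla g|^2$'' and then iterating the kernel/gradient bounds. This is circular: converting the variance into $|\nabla P^{k-2}f|^2$ and then asking for its off-diagonal decay is precisely the Gaffney estimate for the gradient that the proposition is trying to establish. The paper's resolution is that no spatial decay is needed from this factor at all. Since $J_kf\geq 0$, one enlarges the sum from $E$ to all of $\Gamma$; then, because $\sum_{x\in\Gamma}\Delta h(x)m(x)=0$, the Laplacian part of $J_kf=-[\partial_k+\Delta](u_k^p)$ vanishes and one is left with $-\sum_{x\in\Gamma} \partial_k(u_k^p)(x)m(x) \leq p\|u_k\|_p^{p/p'}\|\partial_ku_k\|_p\lesssim \frac1k\|f\|_p^p$, by convexity, H\"older and the analyticity \eqref{Ana} furnished by \eqref{LB}. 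The resulting $\frac1k$ exactly cancels the $k$ you inserted, and the entire off-diagonal decay $(1+\rho(E,F)/k)^{-N}$ is carried by the first factor $\|P^{k-1}f\|_{L^p(E)}^{1-p/2}$ alone --- which suffices because the exponent $1-p/2$ is positive and the decay order in Proposition \ref{GEforPk} is arbitrary. Without this global-summation and telescoping step, your plan has no non-circular way to close the estimate.
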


\begin{proof}
First, assume that $f$ is nonnegative and in $L^1(\Gamma) \cap L^\infty(\Gamma)$. We define for all $k\in \N^*$ and all $p\in (1,2)$ a ``pseudo-gradient'' by
$$ N_p(P^{k-1}f) = -(P^{k-1}f)^{2-p}[\partial_k + \Delta][(P^{k-1}f)^p]$$
where for any function $u_k$ defined on $\Gamma\times \N^*$, $\partial_k u_k = u_{k+1} - u_k$. 

Moreover we define for any function $f$ defined on $\Gamma$ the operator $A$ defined by
$$A f(x) = \sum_{y\sim x} f(y).$$

Propositions 4.6 and 4.7 of \cite{Fen1} state the following results.
\begin{enumerate}[(i)]
 \item For all $x\in \Gamma$, $N_p(P^{k-1}f)(x) \geq 0$. That is, for all $x\in \Gamma$, 
\begin{equation} \label{Fen4Jkisnonnegative} J_kf(x) : = -[\partial_k + \Delta][(P^{k-1}f)^p](x) \geq 0. \end{equation}
 \item For all $p\in (1,2]$, there exists $C=C_p$ such that for all $k\in \N^*$ and all nonnegative function $f \in L^\infty(\Gamma)$, there holds
\begin{equation} \label{Fen4Dungeysresult1} \left|\nabla P^{k-1}f(x)\right|^2 \leq C  \left[A N_p(P^{k-1}f)\right](x).\end{equation}
\end{enumerate}
As a consequence of (ii), if $0 \leq f \in L^\infty$ and $E,F \subset \Gamma$, one has
\[\begin{split}
   \left\|\nabla P^{k-1}f\right\|_{L^p(E)}^p & \lesssim  \left\|A N_p(P^{k-1}f)\right\|_{L^\frac{p}{2}(E)}^{\frac{p}{2}} \\
& \quad \leq \sum_{x\in E}  \left(\sum_{y\sim x}  \left[ N_p(P^{k-1}f)(y) \right]\right)^\frac p2 m(x)  \\
& \quad \lesssim  \sum_{y \in E_{+1}} \left[ N_p(P^{k-1}f)(y) \right]^\frac p2 m(y) \\
& \lesssim  \|[N_p(P^{k-1}f)]^\frac{1}{2}\|_{L^p(E_{+1})}^p,
  \end{split}\]
where
$$E_{+1} = \{y\in \Gamma, \, \exists x\in E: \, x\sim y\} \subset \{y\in \Gamma, \, \rho(y,E) \leq 1\}.$$

It remains to  estimate $\|\left(N_p(P^{k-1}f)\right)^{\frac 12}\|_{L^p(E_{+1})}$. 
Without loss of generality, we can assume that $\rho(E,F) \geq 2^B k \geq 2^B$ (otherwise, \cite[Corollary 1.2]{Dungey2} or \cite[Corollary 4.3]{Fen1} would provide the conclusion of Proposition \ref{Fen4GEforNablaPk}).
Under this assumption, one has $\rho(E_{+1},F) \geq \frac{\rho(E,F)}{2^{B-1}} - 1 \geq \frac{\rho(E,F)}{2^B} \gtrsim \rho(E,F)$.
The proof of the case where $f$ is nonnegative will be thus complete if we prove that for all $p\in (1,2)$, all $N\in \N$ and all $E,F\subset \Gamma$
\begin{equation} \label{Fen4Gaffneyforpseudogradient}
\| \left(N_p(P^{k-1}[f\1_{F}])\right)^{\frac 12}\|_{L^p(E)} \leq \frac{C}{\sqrt k} \left(1+\frac{\rho(E,F)}{k}\right)^{-N} \|f\|_{L^p} \qquad \forall f\in L^p, \, \forall k\in \N^*                                                               
\end{equation}
with some constant $C,c>0$ independent of $E$ and $F$.

In order to do this, we follow the idea of the proof of \cite[Corollary 4.3]{Fen1}. Let $u_k = P^{k-1}[f\1_F]$, then
\begin{equation} \label{Fen4NqPn1} \begin{split}
   \|N_p^{\frac{1}{2}} (u_k)\|_{L^p(E)}^p & = \sum_{x\in E} m(x) N_p^{p/2}(u_k)(x)   \\
& = \sum_{x\in E} m(x) u_k^{\frac{p(2-p)}{2}} J_kf(x)^{p/2} \\
& \leq \left[ \sum_{x\in E} m(x) u_k(x)^p \right]^{\frac{2-p}{2}} \left[ \sum_{x\in E} J_kf(x)m(x) \right]^{\frac{p}{2}} \\
& \leq \|u_k\|_{L^p(E)}^{p(1-\frac p2)} \left[ \sum_{x\in \Gamma} J_kf(x)m(x) \right]^{\frac{p}{2}}
  \end{split} \end{equation}
where the last but one step follows from H\"older inequality and the last one from \eqref{Fen4Jkisnonnegative} stated above. 
Yet,
\[\begin{split}
   \sum_{x\in \Gamma} J_kf(x)m(x) & = - \sum_{x\in \Gamma} \dr_k (u_k^p)(x)m(x) \\
& \leq -p \sum_{x\in \Gamma} m(x) u_k^{p-1}(x) \dr_k u_k(x) \\
& \leq p \|u_k\|_p^{p/p'} \|\dr_k u_k\|_p
  \end{split}\]
where the first line holds because $\sum_{x\in \Gamma}\Delta g(x)m(x)= 0$ if $g\in L^1$, the second line follows from Young inequality, and the third one from H\"older inequality again (with $\frac{1}{p}+\frac{1}{p'}=1$).
Here $\|u_k\|_p \leq \|f\|_p$ while $\|\dr_k u_k \|_p = \|\Delta u_k\|_p  \lesssim \frac{1}{k}\|f\|_p$ by the analyticity of $P$ on $L^p$ (given by \eqref{LB}). Thus
$$\sum_{x\in \Gamma}  J_kf(x)m(x) \lesssim \frac{1}{k} \|f\|_p^p. $$

Substitution of the last estimate in \eqref{Fen4NqPn1} gives
$$\|N_p^{\frac{1}{2}} (u_k)\|_{L^p(E)} \lesssim \frac{1}{\sqrt k}\|f\|_p^{\frac{p}{2}} \|u_k\|_{L^p(E)}^{1-\frac p2},$$
 which ends the proof of \eqref{Fen4Gaffneyforpseudogradient} if we replace $\|u_k\|_{L^p(E)}$ by  the upper estimate given by  Proposition \ref{GEforPk}.

The result for the case where $f\in L^\infty(\Gamma) \cap L^1(\Gamma)$ is deduced by writing $f = f_+ - f_-$, with $f_+ = \max\{0,f\}$ and $f_- = \max\{0,-f\}$.
Then 
\[\begin{split}
   \|\nabla P^{k-1}[f\1_{F}]\|_{L^p(E)} & \leq \|\nabla P^{k-1}[f_+\1_{F}]\|_{L^p(E)} + \|\nabla P^{k-1}[f_-\1_{F}]\|_{L^p(E)} \\
& \leq \frac{C_N}{\sqrt k} \left(1+\frac{\rho(E,F)}{k}\right)^{-N} \left[ \|f_+\|_{p} + \|f_-\|_{p} \right] \\
& \leq \frac{C_N}{\sqrt k} \left(1+\frac{\rho(E,F)}{k}\right)^{-N} \|f\|_{p}.
  \end{split}\]
The result for the general case $f \in L^p(\Gamma)$ is then a consequence of the density of $L^\infty(\Gamma) \cap L^1(\Gamma)$ in $L^p(\Gamma)$.
\end{proof}

\begin{cor} \label{Fen4GEforNablaResolvant}
For any $p\in (1,2)$, the family $(\sqrt s \nabla (I+s\Delta)^{- \frac12})_{s\in \N^*}$ satisfies some weak $L^p$ Gaffney estimates, that is for any $N\in \N$, there exists $C_N$ such that for all $E,F \subset$ and all $s \in \N^*$ satisfying
\begin{equation} \label{scondition}
s \geq \rho(E,F),
\end{equation}
 one has
$$\|\sqrt s \nabla (I+s\Delta)^{- \frac12}[f\1_{F}]\|_{L^p(E)} \leq C_N \left(\frac{\rho(E,F)}{k}\right)^{-N} \|f\|_p.
$$

\end{cor}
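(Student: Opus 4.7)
The plan is to represent $(I+s\Delta)^{-1/2}$ as a convergent series in the powers $P^k$ and reduce to the Gaffney estimates for $\sqrt{k}\,\nabla P^{k-1}$ provided by Proposition~\ref{Fen4GEforNablaPk}.

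First, since $\Delta=I-P$, one has $I+s\Delta=(1+s)(I-\tfrac{s}{1+s}P)$, and the binomial series $(1-z)^{-1/2}=\sum_{k\geq 0}c_kz^k$ with $c_k=\binom{2k}{k}4^{-k}$ converges in operator norm on $L^2$ since $\tfrac{s}{1+s}<1$. Stirling gives $c_k\lesssim(1+k)^{-1/2}$, hence
$$(I+s\Delta)^{-1/2}=\frac{1}{\sqrt{1+s}}\sum_{k=0}^{\infty}c_k\left(\frac{s}{1+s}\right)^kP^k.$$

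Second, I apply Proposition~\ref{Fen4GEforNablaPk} term by term. Using $c_k\lesssim k^{-1/2}$, the trivial bound $\sqrt{s}/\sqrt{1+s}\leq 1$, and Lemma~\ref{expdecay} to control $(s/(1+s))^k$ by $(1+k/s)^{-m}$ with a parameter $m$ to be chosen much larger than $N$, I obtain
$$\bigl\|\sqrt{s}\,\nabla(I+s\Delta)^{-1/2}[f\1_F]\bigr\|_{L^p(E)} \lesssim \sum_{k\geq 1}\frac{1}{k}\left(1+\frac{k}{s}\right)^{-m}\left(1+\frac{\rho(E,F)}{k}\right)^{-N}\|f\|_p.$$

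Third, it remains to bound this sum by $C_N(\rho(E,F)/s)^{-N}$. I split the range of $k$ into $[1,\rho(E,F)]$, $[\rho(E,F),s]$ and $[s,\infty)$ (the middle range being nonempty precisely because of the standing hypothesis $s\geq\rho(E,F)$) and compare each piece to the corresponding integral. The two outer regions contribute $O(1)$, while the middle contributes at most $O(\log(s/\rho(E,F)))$. Since $s\geq\rho(E,F)$ makes $(s/\rho(E,F))^N=(\rho(E,F)/s)^{-N}\geq 1$ dominate each of those contributions, the desired estimate follows.

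The main obstacle is not conceptual but computational: one must choose $m>N+1$ so that the tail $k\geq s$ yields a convergent integral with enough polynomial gain in $s$, and one must verify carefully that the logarithmic contribution from the intermediate regime is indeed absorbed into the polynomial slack $(s/\rho(E,F))^N$ rather than being genuinely sharp, which here is immediate since $\log(s/\rho(E,F))\leq s/\rho(E,F)\leq(s/\rho(E,F))^N$ for $N\geq 1$.
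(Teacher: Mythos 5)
Your proof is correct and follows essentially the same route as the paper's: both expand $(I+s\Delta)^{-1/2}$ as $\frac{1}{\sqrt{1+s}}\sum_k a_k\left(\frac{s}{1+s}\right)^k P^k$ with $a_k\simeq (1+k)^{-1/2}$, apply the termwise Gaffney estimates for $\sqrt{k}\,\nabla P^{k-1}$ from Proposition~\ref{Fen4GEforNablaPk} together with Lemma~\ref{expdecay}, and then sum. The only difference is in the final summation: the paper splits only at $k=s$, whereas your three-way split at $\rho(E,F)$ and at $s$ makes explicit the logarithmic contribution from the intermediate range created by hypothesis~\eqref{scondition}, which, as you correctly observe, is harmlessly absorbed into the polynomial factor $\left(\rho(E,F)/s\right)^{-N}$ for $N\geq 1$.
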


\begin{proof}
The proof is similar to the one of Corollary \ref{Fen4GEforResolvant}.

The  $L^p$-functional calculus provides the identity
\begin{equation} \begin{split}
   (I+s\Delta)^{-\frac12} f&  = \frac{1}{\sqrt{1+s}} \left(I-\frac{s}{1+s}P \right)^{-\frac 12}f \\
& =  \sum_{k=0}^{+\infty} \frac{a_k}{\sqrt{1+s}}\left(\frac{s}{1+s}\right)^k P^kf,
  \end{split}\end{equation}
where the convergence holds in $L^p(\Gamma)$ and $\sum a_k z^k$ is the Taylor series of the function $z\mapsto (1-z)^\frac12$. Note that $a_k \simeq \frac1{\sqrt{1+k}}$ (see Lemma B.1 in \cite{Fen1}).

Let $N\in \N$, $s\in \N^*$ and $E,F\subset \Gamma$ satisfying \eqref{scondition}
Let $f$ be a function supported in $F$. Then, one has with the Gaffney-Davies estimates provided by Proposition \ref{GEforPk}:
\[\begin{split}
   \|s \nabla(I+s\Delta)^{-\frac12} f\|_{L^p(E)} 
& \lesssim \sum_{k=0}^{+\infty}  \frac{a_k}{\sqrt{1+s}}  \left(\frac{s}{1+s}\right)^k \|P^kf\|_{L^p(E)} \\
& \lesssim \sum_{k=0}^{+\infty}   \frac{1}{\sqrt{1+k}\sqrt{1+s}} \left(\frac{s}{1+s}\right)^k \left(1+\frac{\rho(E,F)}{1+k} \right)^{-N} \|f\|_{L^p(F)} \\
& \lesssim \|f\|_{L^p(F)} \left[ \sum_{k=0}^{s} \frac{1}{\sqrt{1+k}\sqrt{1+s}} \left(1+\frac{1+k}{1+s} \right)^{-N}  \left(1+\frac{\rho(E,F)}{1+k} \right)^{-N} \right. \\
& \qquad + \left. \sum_{k=s}^{+\infty} \frac{1+s}{(1+k)^2}  \left(1+\frac{1+k}{1+s} \right)^{-N}  \left(1+\frac{\rho(E,F)}{1+k} \right)^{-N} \right] \\
& \lesssim \|f\|_{L^p(F)} \left(1+\frac{\rho(E,F)}{1+s} \right)^{-N} \left[ \sum_{k=0}^{s} \frac{1}{1+s}  + \sum_{k=s}^{+\infty} \frac{1+s}{(1+k)^2} \right] \\
& \lesssim \|f\|_{L^p(F)} \left(1+\frac{\rho(E,F)}{1+s} \right)^{-N}.\\
  \end{split}\]
 where the third line is a consequence of Lemma \ref{expdecay}.
\end{proof}  

As in the previous paragraph, our last result deals with the uniform $L^1$-boundedness of molecules.

\begin{cor} \label{Fen4BoundedMolecules1}
Let $\epsilon>0$. If $b \in L^2(\Gamma)$ satisfying, for some ball $B=B(x,k)$,
\begin{equation} \label{estimateonbb}
\|b\|_{L^2(C_j(x,k)} \leq \frac{2^{-j\epsilon}}{V(2^jB)^\frac12},
\end{equation}
then
$$\|\sqrt{k} \nabla(I+k\Delta)^{-\frac12} b\|_{L^1} \leq C_\epsilon,$$
where the constant $C_\epsilon$ depends only on $\epsilon$ and $\Gamma$.
\end{cor}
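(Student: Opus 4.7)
The strategy mirrors the proof of Corollary \ref{Fen4BoundedMolecules0}, but since the only Gaffney estimate available for $\sqrt{k}\nabla(I+k\Delta)^{-1/2}$ (from Corollary \ref{Fen4GEforNablaResolvant}) is an $L^p$ estimate for $p\in (1,2)$ rather than an $L^2$ estimate, we must pay a price by using H\"older's inequality to convert between $L^1$, $L^p$ and $L^2$ norms on annuli, with the doubling property \eqref{Fen4PDV} controlling the resulting volume factors.

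Fix $p\in(1,2)$ and set $a=\sqrt{k}\nabla(I+k\Delta)^{-1/2}b$. Decompose $\Gamma=\bigsqcup_{j\geq 0}C_j(x,k)$ and apply H\"older on each annulus:
\[
\|a\|_{L^1}=\sum_{j\geq 0}\|a\|_{L^1(C_j(x,k))}\lesssim \sum_{j\geq 0} V(x,2^jk)^{1-1/p}\,\|a\|_{L^p(C_j(x,k))}.
\]
Decompose $b=\sum_{i\geq 0} b\1_{C_i(x,k)}$ and use the triangle inequality to reduce to estimates for $\|\sqrt{k}\nabla(I+k\Delta)^{-1/2}[b\1_{C_i}]\|_{L^p(C_j)}$.

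For $|i-j|\geq B+1$ we have $\rho(C_i(x,k),C_j(x,k))\gtrsim 2^{\max\{i,j\}}k$, so Corollary \ref{Fen4GEforNablaResolvant} furnishes decay of the form $2^{-N\max\{i,j\}}$ (taking $N$ as large as we need); for $|i-j|<B+1$ the same corollary with $\rho=0$ yields the uniform $L^p$-boundedness of $\sqrt{k}\nabla(I+k\Delta)^{-1/2}$. In either case, we convert the hypothesis \eqref{estimateonbb} into an $L^p$ bound via H\"older:
\[
\|b\|_{L^p(C_i(x,k))}\leq V(x,2^ik)^{1/p-1/2}\|b\|_{L^2(C_i(x,k))}\leq 2^{-i\epsilon}\,V(x,2^ik)^{1/p-1}.
\]
Putting these together, the contribution of a pair $(i,j)$ with $|i-j|\geq B+1$ to $\|a\|_{L^1}$ is bounded by
\[
C\,2^{-N\max\{i,j\}}\,2^{-i\epsilon}\left(\frac{V(x,2^jk)}{V(x,2^ik)}\right)^{1-1/p},
\]
while the pairs with $|i-j|<B+1$ contribute $\lesssim 2^{-i\epsilon}$ each, a summable quantity.

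The main obstacle is the combinatorics of the double sum, which is exactly the kind of computation done in Corollary \ref{Fen4BoundedMolecules0}: one uses \eqref{Fen4PDV} to bound $V(x,2^jk)/V(x,2^ik)\lesssim 2^{(j-i)d}$ when $j\geq i$ and by $1$ otherwise, and splits the sum into the cases $i>j$ and $j>i$. Choosing $N$ large enough (e.g.\ $N>d(1-1/p)+\epsilon$) makes each resulting geometric series converge, yielding a total bound depending only on $\epsilon$ and $\Gamma$. The asymmetry between the two directions ($i>j$ versus $j>i$) has to be handled carefully since the volume-growth penalty only appears in one direction, but this is the same bookkeeping as in the reference corollary.
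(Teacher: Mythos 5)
Your proposal is correct and follows essentially the same route as the paper: H\"older on annuli to pass from $L^1$ to $L^p$ with the factor $V(x,2^jk)^{1-1/p}$, decomposition of $b$ over the $C_i(x,k)$, the weak $L^p$ Gaffney estimates of Corollary \ref{Fen4GEforNablaResolvant} for $|i-j|\geq B+1$, uniform boundedness for the near pairs, H\"older again to convert \eqref{estimateonbb} into an $L^p$ bound, and \eqref{Fen4PDV} to sum the volume ratios. The only (inessential) difference is that for the near pairs $|i-j|<B+1$ the paper invokes the $L^2$-boundedness of $\sqrt{k}\nabla(I+k\Delta)^{-1/2}$ rather than its uniform $L^p$-boundedness, which yields the same summable bound $2^{-i\epsilon}$ up to constants.
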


\begin{proof}
Let $x\in \Gamma$ and $k\in \N^*$. Set $B=B(x,k)$ and let $b$ satisfy \eqref{estimateonb}. Define 
$$a = \sqrt k \nabla (I+k\Delta)^{-\frac12}b$$

Let $p\in (1,2)$. Check that we have the three following facts:
\begin{itemize}
\item Corollary \ref{Fen4GEforNablaResolvant} yields that the family $(\sqrt k \nabla (I+k\Delta)^{-1})_{k}$ satisfies weak $L^p$ Gaffney estimates. 
\item There holds $\rho(C_j(x,k),C_i(x,k)) \geq 2^{\max\{i,j\}} k$  if $|j-i| \geq  B+1$.
\item The operator $\sqrt k \nabla (I+k\Delta)^{-1}$ is bounded on $L^2$. Indeed, The $L^2$ boundedness of $\nabla \Delta^{-\frac12}$ yields that 
$$\|\sqrt k \nabla (I+k\Delta)^{-1}\|_{2\to 2} \simeq \|(k\Delta)^\frac12(I+k\Delta)^{-\frac12}\|_{2\to 2} = \|[I- (I+k\Delta)^{-1}]^\frac12\|_{2\to 2} \leq 1.$$
\end{itemize}

Moreover, check that
Thus, 
\[\begin{split}
   \left\| a  \right\|_1& \leq \sum_{j\geq 0} V(x,2^jk)^{1-\frac 1p} \left\| [I-(I+s\Delta)^{-1}] b \right\|_{L^p(C_j(x,k))} \\
& \lesssim \sum_{i \geq 0} \sum_{j\geq 0} V(x,2^jk)^{1-\frac 1p} \left\| [I-(I+s\Delta)^{-1}] [b\1_{C_i(x,k)}] \right\|_{L^p(C_j(x,k))} \\
& \lesssim  \sum_{|i-j|\geq B+1} V(x,2^jk)^{1-\frac 1p} \left\| [I-(I+s\Delta)^{-1}] [b\1_{C_i(x,k)}] \right\|_{L^p(C_j(x,k))} \\ 
& \qquad + \sum_{|i-j|< B+1} V(x,2^jk)^{\frac 12}  \left\| [I-(I+s\Delta)^{-1}] [b\1_{C_i(x,k)}] \right\|_{L^2(C_j(x,k))} \\
& \lesssim  \sum_{|i-j|\geq B+1} V(x,2^jk)^{1-\frac1p} 2^{-(d+1)j} \left\|b\right\|_{L^p(C_i(x,k))} 
+ \sum_{|i-j|< B+1} V(x,2^jk)^{\frac 12}  \left\|b\right\|_{L^2(C_i(x,k))} \\
& \lesssim  \sum_{|i-j|\geq B+1} V(x,2^jk)^{1-\frac1p} V(x,2^ik)^{\frac1p-\frac12} 2^{-(d+1)j} \left\|b\right\|_{L^2(C_i(x,k))} 
+ \sum_{|i-j|< B+1} V(x,2^jk)^{\frac 12}  \left\|b\right\|_{L^2(C_i(x,k))} \\
& \lesssim \sum_{|i-j|\geq B+1} \left(\frac{V(x,2^jk)}{V(x,2^ik)}\right)^{1-\frac 1p} 2^{-(d+1)j}  2^{-i\epsilon}
+ \sum_{|i-j|< B+1} \left(\frac{V(x,2^jk)}{V(x,2^ik)}\right)^{\frac 12} 2^{-i\epsilon}  \\
& \lesssim \sum_{|i-j|\geq B+1} 2^{jd(1-1/p)} 2^{-(d+1)j} 2^{-i\epsilon} + \sum_{|i-j|< B+1} 2^{-i\epsilon} \\
& \lesssim 1.
  \end{split}\] 
\end{proof}

\subsection{Off diagonal decay of Lusin functionals}

\begin{prop} \label{Fen4OffDiagonalDecayLbeta}
 Let $\beta>0$. 
Then there exists $C,c>0$ such that, for all $x_0\in \Gamma$ and all sets $E,F \subset \Gamma$ satisfying\begin{enumerate}[(i)]
\item $\sup\{\rho(x_0,y), \, y\in F\} \leq \rho(E,F)$ or
\item $\sup\{\rho(x_0,x), \, x\in E\} \leq \rho(E,F)$,
\end{enumerate} 
there holds for all $s>0$ and all $f\in L^1(\Gamma)\cap L^2(\Gamma)$,
$$\|L_\beta (I-(I+s\Delta)^{-1}) [f\1_F]\|_{L^2(E)} \leq  \frac{C}{V(x_0,\rho(E,F))^\frac12} \frac{s}{\rho(E,F)} \|f\|_{L^1}.$$
\end{prop}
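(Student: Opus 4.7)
Write $R := \rho(E,F)$, $g := (I-(I+s\Delta)^{-1})[f\1_F] = s\Delta(I+s\Delta)^{-1}[f\1_F]$, and let $L_\beta$ be the Lusin functional built from $(k\Delta)^\beta P^{k-1}$ over the cones $\gamma(x)$. Unfolding this definition and swapping the sums, the observation that $\sum_{x\in E,\,\rho(x,y)<k}\frac{m(x)}{V(x,k)}\lesssim 1$ uniformly in $(y,k)$---which holds because $V(x,k)\simeq V(y,k)$ whenever $\rho(x,y)<k$, by \eqref{DV}---gives
\[
\|L_\beta g\|_{L^2(E)}^2 \lesssim \sum_{k\geq 1}\frac{1}{k}\bigl\|(k\Delta)^\beta P^{k-1}g\bigr\|_{L^2(\widetilde E_k)}^2,\qquad \widetilde E_k:=\{y:\rho(y,E)<k\}.
\]
Since $\Delta$, $P$ and $(I+s\Delta)^{-1}$ pairwise commute, $(k\Delta)^\beta P^{k-1}g = \frac{s}{k}V_k^s[f\1_F]$ with $V_k^s := (k\Delta)^{\beta+1}P^{k-1}(I+s\Delta)^{-1}$, producing the crucial $s/k$ prefactor.

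Next I would estimate $\|V_k^s[f\1_F]\|_{L^2(\widetilde E_k)}$. Expanding $(I+s\Delta)^{-1}=\frac{1}{1+s}\sum_{n\geq 0}(\frac{s}{1+s})^n P^n$ reduces the task to controlling the individual terms $(k\Delta)^{\beta+1}P^{k+n-1}[f\1_F]$. For $k \leq R/C$ (with $C$ depending only on the quasi-triangle constant $B$), the hypothesis $F\subset B(x_0,R)$ forces $\rho(\widetilde E_k, F)\gtrsim R$. The rescaling $(k\Delta)^{\beta+1}=(\frac{k}{k+n})^{\beta+1}((k+n)\Delta)^{\beta+1}$ combined with Corollary~\ref{Fen4LqLpforDeltaPk} applied to $((k+n)\Delta)^{\beta+1}P^{k+n-1}$ yields
\[
\bigl\|(k\Delta)^{\beta+1}P^{k+n-1}[f\1_F]\bigr\|_{L^2(\widetilde E_k)} \lesssim \Bigl(\frac{k}{k+n}\Bigr)^{\beta+1}\frac{1}{V(x_0, k+n)^{1/2}}\Bigl(1+\frac{R}{k+n}\Bigr)^{-M}\|f\|_{L^1}
\]
for arbitrary $M$. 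Summing in $n$ against $(s/(1+s))^n/(1+s)$ via Lemma~\ref{expdecay} localizes the effective time at scale $\max(k,s)$. For $k>R/C$ the Gaffney factor is wasted, so one instead uses the crude $\|V_k^s[f\1_F]\|_{L^2}\lesssim V(x_0,k)^{-1/2}\|f\|_{L^1}$, which holds because $V(y,k)\simeq V(x_0,k)$ for $y\in F\subset B(x_0,R)\subset B(x_0,Ck)$.

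Plugging these bounds back into the reduction from the first paragraph, the quotient $\|L_\beta g\|_{L^2(E)}^2/\|f\|_{L^1}^2$ is dominated by
\[
s^2\sum_{k\geq 1}\frac{1}{k^3\, V(x_0,\max(k,s))}\Bigl(1+\frac{R}{k+s}\Bigr)^{-2M}.
\]
Splitting the sum over the three regimes $k\leq s$, $s<k\leq R$, $k>R$, using doubling (so $V(x_0,R)\lesssim (R/k)^d V(x_0,k)$ for $k\leq R$), and choosing $M$ larger than $d/2+1$, each regime contributes $\lesssim \frac{s^2}{R^2 V(x_0,R)}$. Taking a square root yields the claim. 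Case (ii) is handled symmetrically: the Gaffney step uses $\widetilde E_k \subset B(x_0, CR)$ in place of $F\subset B(x_0,R)$, and in the large-$k$ regime the comparison $V(y,k)\simeq V(x_0,k)$ is now for $y$ close to $E$.

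The principal obstacle is the non-locality of the resolvent $(I+s\Delta)^{-1}$: applied to $f\1_F$ it spreads over all of $\Gamma$, so one cannot directly feed $V_k^s$ into the Gaffney estimates of the semigroup. Rewriting $(I+s\Delta)^{-1}$ as a geometric series in $P$ trades this non-locality for a new time parameter $n$, and the heart of the argument is the careful balancing of the weights $(\frac{s}{1+s})^n$ (which concentrate $n$ near $s$) against the Gaffney decay $(1+R/(k+n))^{-M}$, the rescaling loss $(k/(k+n))^{\beta+1}$, and the geometric $1/k^3$ accumulated from Step~1 and the $s/k$ factor, so that exactly one order of Gaffney decay---namely $s/R$---survives the summation.
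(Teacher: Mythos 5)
Your overall architecture is the same as the paper's (unfold the Lusin functional onto the enlarged sets $\widetilde E_k=\{y:\rho(y,E)<k\}$, expand the resolvent as a geometric series in $P$, apply the $L^1$--$L^2$ Gaffney estimates of Corollary \ref{Fen4LqLpforDeltaPk} with a case split at $k\lessgtr \rho(E,F)$, then resum), but two steps do not go through as written. First, Corollary \ref{Fen4LqLpforDeltaPk} is stated only for integer powers $j\in\N$ of $k\Delta$, while you invoke it for $((k+n)\Delta)^{\beta+1}$ with $\beta>0$ arbitrary. The paper devotes a whole layer of its proof to exactly this point: it introduces the integer $\kappa$ with $\kappa<\beta+1\le\kappa+1$, writes $\Delta^{\beta+1}=\sum_l a_l P^l\Delta^{\kappa+1}$ where $\sum_l a_lz^l$ is the Taylor series of $(1-z)^{\beta-\kappa}$, applies the integer-order Gaffney bound to $\Delta^{\kappa+1}P^{k+l+m-1}$, and then verifies $\sum_{l=0}^{n-1}a_l(n-l)^{\beta-\kappa}\lesssim1$ to resum. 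Without this device (or an independent proof of fractional-order Gaffney estimates) your key pointwise-in-$(k,n)$ estimate is unjustified.

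Second, your final displayed sum $s^2\sum_k\frac{1}{k^3V(x_0,\max(k,s))}\bigl(1+\frac{R}{k+s}\bigr)^{-2M}$ has dropped the rescaling factor $\bigl(\frac{k}{k+n}\bigr)^{\beta+1}$ that you yourself derived, and without it the claim that ``each regime contributes $\lesssim s^2/(R^2V(x_0,R))$'' is false: in the regime $k\le s$ the sum is comparable to $\frac{s^2}{V(x_0,s)}\bigl(1+\frac Rs\bigr)^{-2M}$, which for $s\simeq R$ is of size $R^2/V(x_0,R)$, exceeding the target $s^2/(R^2V(x_0,R))\simeq 1/V(x_0,R)$ by a factor $R^2$. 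The factor $\bigl(\frac{k}{k+n}\bigr)^{2(\beta+1)}\simeq\bigl(\frac{k}{k+s}\bigr)^{2(\beta+1)}$, surviving the $n$-summation, is precisely what rescues this regime: it converts $\sum_{k\le s}k^{-3}$ into $s^{-2\beta-2}\sum_{k\le s}k^{2\beta-1}\simeq s^{-2}$, and it is the analogue of the factor $k^{1+\beta-\kappa}(k+l+m)^{-2}$ carried through the paper's quantity $J(k,l,m)$ in \eqref{Fen4Iklmestimate}. Both defects are repairable along the lines of the paper's argument, but as written the proposal does not establish the stated bound.
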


The proof of Proposition \ref{Fen4OffDiagonalDecayLbeta} is similar to the one of \cite[Lemma 3.5]{Fen1} (based itself on Lemma 3.1 in \cite{BRuss}). 

\begin{lem} \label{Fen4l2l1}
Let $r,u\geq 0$, $\alpha<2$. Then
$$\left( \sum_{k\in \N^*} \frac{1}{k} \left[ \frac{k^\alpha}{(k+u)^2} \left(1+\frac{r}{k + u}\right)^{-N} \right]^2 \right)^\frac12 \leq C \sum_{k\in \N^*} \frac{1}{k} \frac{k^\alpha}{(k+u)^2} \left(1+\frac{r}{k + u}\right)^{-N},$$
with a constant $C$ independent on $d$ and $u$.
\end{lem}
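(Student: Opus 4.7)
\medskip
\noindent\textbf{Proof sketch (plan).}
Write $a_k = \dfrac{k^\alpha}{(k+u)^2}\bigl(1+\tfrac{r}{k+u}\bigr)^{-N}$, so that the claim reads
$\bigl(\sum_{k\in\N^*} \frac{a_k^2}{k}\bigr)^{1/2} \le C \sum_{k\in\N^*} \frac{a_k}{k}$. The plan is to break both sides into dyadic blocks, observe that $a_k$ is essentially constant on each block, and then reduce the inequality to the trivial embedding $\ell^2\hookrightarrow\ell^1$ on a sequence indexed by the block level.

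For $j\ge 0$ set $I_j = [2^j,2^{j+1})\cap \N^*$ and
$$b_j = \frac{(2^j)^\alpha}{(2^j+u)^2}\Bigl(1+\frac{r}{2^j+u}\Bigr)^{-N}.$$
First I would check that $a_k \simeq b_j$ for every $k\in I_j$, with comparability constants independent of $r,u$ (and of $j$). This is immediate: on $I_j$ one has $k\simeq 2^j$, and from $2^j+u \le k+u \le 2(2^j+u)$ we get $(k+u)^2\simeq (2^j+u)^2$ and $r/(k+u)\simeq r/(2^j+u)$, and the function $x\mapsto (1+x)^{-N}$ respects this comparability.

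Since $\#I_j\simeq 2^j$ and $k\simeq 2^j$ on $I_j$, summing over $k\in I_j$ yields
$$\sum_{k\in I_j}\frac{a_k^2}{k}\simeq b_j^2, \qquad \sum_{k\in I_j}\frac{a_k}{k}\simeq b_j,$$
uniformly in $j$. Summing over $j\ge 0$, the desired inequality reduces to
$$\Bigl(\sum_{j\ge 0} b_j^2\Bigr)^{1/2} \le C \sum_{j\ge 0} b_j,$$
which is the trivial bound $\|{\cdot}\|_{\ell^2}\le\|{\cdot}\|_{\ell^1}$ for nonnegative sequences.

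There is no real obstacle here: the role of the hypothesis $\alpha<2$ is purely to guarantee that the two sides are actually finite (one checks $b_j\lesssim 2^{j(\alpha-2)}$ and the geometric-series tail converges iff $\alpha<2$); otherwise the inequality is vacuous. The only thing worth writing out carefully is the pointwise comparison $a_k\simeq b_j$ on $I_j$ uniformly in $r,u$, since that is where the factor $(1+r/(k+u))^{-N}$ has to be handled.
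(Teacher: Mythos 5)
Your proof is correct and follows essentially the same route as the paper: dyadic decomposition into blocks $[2^j,2^{j+1})$, the uniform comparison $a_k\simeq b_j$ on each block, and the reduction to $\|\cdot\|_{\ell^2}\le\|\cdot\|_{\ell^1}$ over the block index. Your added remark on the role of $\alpha<2$ (finiteness of both sides) is a small but accurate complement to the paper's argument.
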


\begin{proof}  (Lemma \ref{Fen4l2l1})

If $k\in [2^n,2^{n+1}]$, remark that $k \simeq 2^n$, $k+u \simeq 2^n + u$ and $1+\frac{r}{k + u}\simeq 1+\frac{r}{2^n + u}$ with constants independent on $k$, $n$, $u$ and $r$. Therefore

\[\begin{split}
\left( \sum_{k\in \N^*} \frac{1}{k} \left[ \frac{k^\alpha}{(k+u)^2} \left(1+\frac{r}{k + u}\right)^{-N} \right]^2 \right)^\frac12
& \lesssim \left( \sum_{n\in \N} \left[ \frac{2^{n\alpha}}{(2^n+u)^2}\left(1+\frac{r}{2^n + u}\right)^{-N} \right]^2 \right)^\frac12 \\
& \lesssim  \sum_{n\in \N} \frac{2^{n\alpha}}{(2^n+u)^2} \left(1+\frac{r}{2^n + u}\right)^{-N} \\
& \lesssim  \sum_{k\in \N^*} \frac{1}{k} \frac{k^\alpha}{(k+u)^2} \left(1+\frac{r}{k + u}\right)^{-N}.
\end{split}\]
\end{proof}

\begin{proof} (Proposition \ref{Fen4OffDiagonalDecayLbeta})

Without loss of generality, we can assume that $f$ is supported in $F$. 
We can also assume that $x_0$, $E$ and $F$ satisfy assumption (i) (if they satisfy (ii) instead of (i), the proof is similar). 

First, since $\sum_m \tilde z^m$ is the Taylor series of the function $(1-z)^{-1}$, one has the identity
\begin{equation} \label{Fen4decompinm}
 \begin{split}
    (I-(I+s\Delta)^{-1}) f & = (s\Delta) (I+s\Delta)^{-1}f \\
& = \frac{s\Delta}{1+s} \left(I - \frac{s}{1+s} P\right)^{-1}f \\
& = \frac{s\Delta}{1+s} \sum_{m=0}^{+\infty} \left(\frac{s}{1+s}\right)^m P^m f \\
& = s\Delta \sum_{m=0}^\infty b_m P^mf,
 \end{split}
\end{equation}
where $b_m : = \frac{s^m}{(1+s)^{m+1}}$ and the series converges in $L^2(\Gamma)$. Notice that 
\begin{equation} \label{Fen4sumbm}
\sum_{m=0}^\infty b_m = 1.
\end{equation}
Moreover, let $\kappa$ be the only integer such that $\kappa < \beta + 1 \leq \kappa + 1$. 
Since $\beta>0$ and $\kappa$ is an integer, notice that 
\begin{equation}\label{Fen4Mleqkappa} 1\leq \kappa \end{equation}
If $\sum_l a_l z^l$ is the Taylor series of the function $(1-z)^{\beta-\kappa}$, then one has
\begin{equation} \label{Fen4decompinl}
 \Delta^{\beta+1} f  = \sum_{l\geq 0} a_l P^l \Delta^{\kappa+1} f
\end{equation}
where the sum converges in $L^2(\Gamma)$ (see \cite[Proposition 2.1]{Fen2} for the proof of the convergence).

\noindent The Minkowski inequality together with the identities \eqref{Fen4decompinm} and \eqref{Fen4decompinl} yields
\[\begin{split}
\|L_\beta & (I-(I+s\Delta)^{-1}) f\|_{L^2(E)}  \\
& = \left( \sum_{k\geq 1} k^{2\beta-1} \sum_{x\in E} \frac{m(x)}{V(x,k)} \sum_{y\in B(x,k)} m(y) |\Delta^{\beta} P^{k-1} (I-(I+s\Delta)^{-1}) f(y)|^2\right)^\frac12 \\
& \leq \left( \sum_{k\geq 1} k^{2\beta-1}  \sum_{y\in D_k(E)} m(y) |\Delta^{\beta} P^{k-1} (I-(I+s\Delta)^{-1}) f(y)|^2 \sum_{x\in B(y,k)} \frac{m(x)}{V(x,k)}\right)^\frac12 \\
& \lesssim \left( \sum_{k\geq 1} k^{2\beta-1}  \|\Delta^{\beta} P^{k-1} (I-(I+s\Delta)^{-1}) f\|_{L^2(D_k(E))}^2 \right)^\frac12 \\
& \quad \leq  s \sum_{l,m\geq 0} a_lb_m \left( \sum_{k\geq 1} k^{2\beta-1} \|\Delta^{1+\kappa} P^{k+l+m-1} f\|_{L^2(D_k(E))}^2\right)^\frac12 \\
&  \quad : = s \sum_{l,m\geq 0} a_lb_m \left( \sum_{k\geq 1}\frac{1}{k} I(k,l,m)^2 \right)^\frac12,
\end{split}\]
where $D_k(E)$ denotes the set $\{y\in \Gamma, \, \rho(y,E) < k\}$

\noindent We want to get the following estimate estimate: there exists $c>0$ such that
\begin{equation} \label{Fen4Iklmestimate}
 I(k,l,m) \lesssim J(k,l,m): = k^{1+\beta-\kappa} \frac{1}{V(x_0,\rho(E,F))} \frac{1}{(k+l+m)^{2}} \left(1+ \frac{\rho(E,F)}{k+l+m} \right)^{-N}  \|f\|_{L^1}.
\end{equation}

\noindent We will first establish \eqref{Fen4Iklmestimate} when $k \leq \frac{\rho(E,F)}{2^B}$. In this case, notice that
$$\rho(D_k(E),F) \geq \frac{\rho(E,F)}{2^{B-1}} - k \geq \frac{\rho(E,F)}{2^B}$$
and thus $(D_k(E),F,x_0)$ satisfies the assumption of Corollary \ref{Fen4LqLpforDeltaPk}, which implies
\[\begin{split}
   I(k,l,m) & \lesssim \frac{k^\beta}{(k+l+m)^{\kappa + 1}} \frac{1}{V(x_0,k+l+m)^{\frac12}} \left(1+ \frac{\rho(E,F)}{2^B(k+l+m)} \right)^{-N-\frac d2} \|f\|_{L^1} \\
&  \lesssim \frac{k^{\beta+1-\kappa}}{(k+l+m)^{2}} \frac{1}{V(x_0,\rho(E,F))^{\frac12}} \left(1+ \frac{\rho(E,F)}{k+l+m} \right)^{-N}  \|f\|_{L^1}
  \end{split}\]
where the last line holds thanks to estimate \eqref{Fen4Mleqkappa} and Proposition \ref{Fen4propDV}.

\noindent Otherwise, $k > \frac{\rho(E,F)}{2^B}$ and then $(F,x_0,k+l+m)$ satisfies the assumption of Corollary \ref{Fen4LqLpforDeltaPk}, which yields
\begin{equation}\begin{split}
   I(k,l,m) & \lesssim \frac{k^\beta}{(k+l+m)^{\kappa + 1}} \frac{1}{V(x_0,k+l+m)^{\frac12}} \|f\|_{L^1} \\
&  \lesssim \frac{k^{\beta+1-\kappa}}{(k+l+m)^{2}} \frac{1}{V(x_0,\rho(E,F))^{\frac12}} \|f\|_{L^1} \\
&  \lesssim \frac{k^{\beta+1-\kappa}}{(k+l+m)^{2}} \frac{1}{V(x_0,\rho(E,F))^{\frac12}} \left(1+ \frac{\rho(E,F)}{k+l+m} \right)^{-N}  \|f\|_{L^1}
  \end{split}\end{equation}
where the second line holds thanks to \eqref{DV} and \eqref{Fen4Mleqkappa}. This ends the proof of \eqref{Fen4Iklmestimate}.

\noindent Recall that $1+\beta-\kappa \in (0,1]$. Then Lemma \ref{Fen4l2l1} implies
\begin{equation}\label{Fen4calculusIklm1}\begin{split}
  \|L_\beta & (I-(I+s\Delta)^{-1})  f\|_{L^2(E)} \lesssim s \sum_{k,l,m\geq 0} \frac{1}{k} a_lb_m  J(k,l,m) \\
& \lesssim s^M \frac{\|f\|_{L^1}}{V(x_0,\rho(E,F))^\frac12} \sum_{m\geq 0} b_m \sum_{n\geq 1} \frac{1}{(n+m)^{2}} \left(1+ \frac{\rho(E,F)}{n+m} \right)^{-N}  \sum_{l=0}^{n-1} a_l (n-l)^{\beta-\kappa}.
\end{split}\end{equation}
We claim
$$\sum_{l=0}^{n-1} a_l (n-l)^{M+\beta-\kappa-1} \lesssim 1.$$
Indeed, when $\beta=\kappa$, one has $a_l = 1$ if $l=0$ and equals $0$ otherwise. Therefore the estimate is true. 
If $\beta<\kappa$, \cite[Lemma B.1]{Fen1} yields that $a_l \lesssim (l+1)^{\kappa-1-\beta}$ and therefore
$$\sum_{l=0}^{n-1} a_l (n-l)^{\beta-\kappa} \lesssim \int_0^1 t^{\kappa-1-\beta}(1-t)^{\beta-\kappa} dt < +\infty.$$

Besides, 
\[\begin{split}
   \sum_{n\geq 1} \frac{1}{(n+m)^{2}} \left(1+ \frac{\rho(E,F)}{n+m} \right)^{-N}
& = \rho(E,F)^{-2} \sum_{n\geq 1} \left(\frac{\rho(E,F)}{n+m}\right)^{2} \left(1+ \frac{\rho(E,F)}{n+m} \right)^{-N}  \\
& \lesssim \rho(E,F)^{-2} \left[ \sum_{n=1}^{\rho(E,F)} 1 + \sum_{n > \rho(E,F)} \left(\frac{\rho(E,F)}{m+n}\right)^{2} \right] \\
& \lesssim \rho(E,F)^{-1}.
  \end{split}\]
Consequently, the estimate \eqref{Fen4calculusIklm1} yields
\[\begin{split}
  \|L_\beta (I-(I+s\Delta)^{-1}) f\|_{L^2(E)} & \lesssim \frac{1}{V(x_0,\rho(E,F))^\frac12} \frac{s}{\rho(E,F)}  \|f\|_{L^1} \sum_{m\geq 0} b_m \\
& \quad = \frac{1}{V(x_0,\rho(E,F))^\frac12} \frac{s}{\rho(E,F)}  \|f\|_{L^1}
  \end{split}\]
where the last line is due to \eqref{Fen4sumbm}.
\end{proof}

In the same way, we have

\begin{prop} \label{Fen4OffDiagonalDecayLbeta2}
 With the same assumptions as Proposition \ref{Fen4OffDiagonalDecayLbeta}, for any  $f\in L^1(\Gamma) \cap L^2(\Gamma)$, there holds
$$\|L_\frac12 (I-(I+s\Delta)^{-1})^\frac12 [f\1_F]\|_{L^2(E)} \leq  \frac{C}{V(x_0,\rho(E,F))^\frac12} \left(\frac{s}{\rho(E,F)}\right)^\frac12 \|f\|_{L^1}.$$
\end{prop}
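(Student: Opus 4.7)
The approach parallels that of Proposition \ref{Fen4OffDiagonalDecayLbeta}, but exploits a fortunate coincidence between the exponent $\beta = 1/2$ of the Lusin functional and the square root of the resolvent: by functional calculus,
\[
\Delta^{1/2}(I-(I+s\Delta)^{-1})^{1/2} = \sqrt{s}\,\Delta\,(I+s\Delta)^{-1/2},
\]
so whereas the previous proof required two Taylor expansions (for $\Delta^{\beta+1}$ and for the resolvent), here only one is needed. Writing $\Delta = I-P$ gives
\[
(I+s\Delta)^{-1/2} = \frac{1}{\sqrt{1+s}} \sum_{m\geq 0} c_m \left(\frac{s}{1+s}\right)^m P^m,
\]
with $c_m \simeq (1+m)^{-1/2}$ the Taylor coefficients of $(1-z)^{-1/2}$. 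Setting $\tilde b_m := c_m s^m/(1+s)^{m+1/2}$, evaluation of the Taylor series at $z = s/(1+s)$ gives $\sum_m \tilde b_m = 1$.

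Applying the definition of $L_{1/2}$ (whose weight $k^{2\beta-1}$ is simply $1$) followed by Minkowski in $m$, one obtains
\[
\|L_{1/2}(I-(I+s\Delta)^{-1})^{1/2}[f\1_F]\|_{L^2(E)} \lesssim \sqrt{s}\sum_{m\geq 0} \tilde b_m \left(\sum_{k\geq 1} \|\Delta P^{k+m-1}[f\1_F]\|_{L^2(D_k(E))}^2\right)^{\!1/2},
\]
where $D_k(E) = \{y\in\Gamma : \rho(y,E) < k\}$. The inner norms are bounded via Corollary \ref{Fen4LqLpforDeltaPk}, applied with $k$ replaced by $k+m$, after the same split as in the previous proof: if $k \leq \rho(E,F)/2^B$ then $\rho(D_k(E),F) \gtrsim \rho(E,F)$ and condition (1) of the corollary applies; otherwise the factor $(1+\rho(E,F)/(k+m))^{-N}$ is trivially $\leq 1$ and doubling of $V(x_0,\cdot)$ gives $V(x_0,k+m) \gtrsim V(x_0,\rho(E,F))$. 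Either way,
\[
\|\Delta P^{k+m-1}[f\1_F]\|_{L^2(D_k(E))} \lesssim \frac{\|f\|_{L^1}}{(k+m)^2\,V(x_0,\rho(E,F))^{1/2}}\left(1+\frac{\rho(E,F)}{k+m}\right)^{-N}.
\]

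For the final summation, use $(\sum_k a_k^2)^{1/2} \leq \sum_k a_k$ (or Lemma \ref{Fen4l2l1} with $\alpha=1$) together with the elementary estimate $\sum_{k\geq 1}(k+m)^{-2}(1+r/(k+m))^{-N} \lesssim 1/\max(m,r)$ valid for $N\geq 2$ and $r=\rho(E,F)$; combining this with $\sum_m \tilde b_m = 1$ yields a total bound of order $\sqrt{s}/\rho(E,F)$ times $\|f\|_{L^1}/V(x_0,\rho(E,F))^{1/2}$. Because $\rho(E,F)\geq 1$, this is in fact stronger than the claimed bound in $(s/\rho(E,F))^{1/2}$. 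The delicate point is the $m$-summation once $m > \rho(E,F)$, where the distance decay is no longer available and one must rely instead on the decay $\tilde b_m \lesssim (1+m)^{-1/2}(1+s)^{-1/2}$ of the resolvent coefficients to prevent any spurious $s$-growth; checking this carefully, via splitting the regimes $m\leq \rho(E,F)$ and $m > \rho(E,F)$, is the principal obstacle.
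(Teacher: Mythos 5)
Your overall strategy is the one the paper intends (the paper proves this proposition by declaring it ``similar to Proposition \ref{Fen4OffDiagonalDecayLbeta}''): the identity $\Delta^{1/2}(I-(I+s\Delta)^{-1})^{1/2}=\sqrt{s}\,\Delta(I+s\Delta)^{-1/2}$, the expansion of $(I+s\Delta)^{-1/2}$ in powers of $P$ with coefficients $\tilde b_m$ summing to $1$, the reduction to $\bigl(\sum_k\|\Delta P^{k+m-1}[f\1_F]\|_{L^2(D_k(E))}^2\bigr)^{1/2}$, and the two-regime application of Corollary \ref{Fen4LqLpforDeltaPk} are all correct and are exactly what is needed. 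The gap is in your displayed intermediate bound: the estimate actually available for $j=1$ (as Corollary \ref{Fen4LqLpforDeltaPk} is used in the proof of Proposition \ref{Fen4OffDiagonalDecayLbeta}, and as follows from the kernel bound of Proposition \ref{higherpointwiseestimate}) is
$$\|\Delta P^{k+m-1}[f\1_F]\|_{L^2(D_k(E))}\lesssim\frac{\|f\|_{L^1}}{(k+m)\,V(x_0,\rho(E,F))^{1/2}}\Bigl(1+\frac{\rho(E,F)}{k+m}\Bigr)^{-N},$$
with a single power of $(k+m)$, not $(k+m)^{2}$. (Your version would amount to $\|\Delta P^{n-1}\|_{L^1\to L^2}\lesssim n^{-2}V^{-1/2}$, which contradicts Proposition \ref{higherpointwiseestimate}; compare also the paper's own use $\|\Delta^{1+\kappa}P^{n-1}f\|_{L^2}\lesssim n^{-(1+\kappa)}V^{-1/2}\|f\|_{L^1}$.) This matters because your final summation depends on the extra power twice: the passage $(\sum_k a_k^2)^{1/2}\le\sum_k a_k$ followed by $\sum_k(k+m)^{-2}(1+r/(k+m))^{-N}\lesssim 1/\max(m,r)$ only works for the $(k+m)^{-2}$ weight, whereas with the correct $(k+m)^{-1}$ weight the $\ell^1$ sum in $k$ diverges. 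In particular your claimed ``stronger'' final bound $\sqrt{s}/\rho(E,F)$ is not justified by this argument.

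The repair is short and lands exactly on the stated estimate: do not pass to the $\ell^1_k$ sum, but estimate the $\ell^2_k$ sum directly, as the paper does at the end of the proof of Proposition \ref{Fen4OffDiagonalDecayLbeta}:
$$\Bigl(\sum_{k\ge1}\frac{1}{(k+m)^2}\Bigl(1+\frac{\rho(E,F)}{k+m}\Bigr)^{-2N}\Bigr)^{1/2}\lesssim\rho(E,F)^{-1/2},$$
uniformly in $m$. Multiplying by the prefactor $\sqrt{s}$ and using $\sum_m\tilde b_m=1$ then gives precisely $V(x_0,\rho(E,F))^{-1/2}\bigl(s/\rho(E,F)\bigr)^{1/2}\|f\|_{L^1}$. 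With this route the ``delicate point'' you flag for $m>\rho(E,F)$ disappears: no decay of $\tilde b_m$ in $m$ or $s$ is needed beyond $\sum_m\tilde b_m=1$.
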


Let us define a discrete version of the Littlewood-Paley functionals, that can be found in \cite{Fen1}. 
For any $\beta>0$, the functional $g_\beta$ is defined as
$$g_\beta f(x) = \left( \sum_{k=1}^\infty k^{2\beta-1} |\Delta^\beta P^{k-1}f(x)|^2 \right)^2.$$

\begin{prop} \label{Fen4OffDiagonalDecayLbetagbeta} 
 With the same assumptions as Proposition \ref{Fen4OffDiagonalDecayLbeta}, for any  $f\in L^1(\Gamma) \cap L^2(\Gamma)$, there holds
$$\|L_\beta (I-P^k) [f\1_F]\|_{L^2(E)} \leq  \frac{C}{V(x_0,\rho(E,F))^\frac12} \frac{k}{\rho(E,F)} \|f\|_{L^1}$$
and
$$\|g_\beta (I-P^k) [f\1_F]\|_{L^2(E)} \leq  \frac{C}{V(x_0,\rho(E,F))^\frac12} \frac{k}{\rho(E,F)} \|f\|_{L^1}$$
\end{prop}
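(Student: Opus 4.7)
The plan is to mimic the proof of Proposition~\ref{Fen4OffDiagonalDecayLbeta}, with the resolvent expansion replaced by the factorisation $I-P^{k_0} = \Delta\sum_{j=0}^{k_0-1} P^j$. In other words, the weighted geometric sum $\sum_m b_m s\Delta P^m$ with $\sum b_m = 1$ is replaced by a finite sum of length $k_0$ with unit coefficients, and one expects the eventual factor $s/\rho(E,F)$ to be replaced by $k_0/\rho(E,F)$.

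First I would handle the $L_\beta$ estimate. Assume $f$ is supported in $F$, let $\kappa$ be the unique integer with $\kappa<\beta+1\leq \kappa+1$, and use the Taylor expansion $\Delta^{\beta+1} = \sum_l a_l P^l \Delta^{\kappa+1}$ from the proof of Proposition~\ref{Fen4OffDiagonalDecayLbeta} to write
\[ \Delta^\beta P^{n-1}(I-P^{k_0})f = \sum_{j=0}^{k_0-1}\sum_{l\geq 0}a_l\Delta^{\kappa+1} P^{n+j+l-1}f. \]
After the same swap-of-summation that yields $\|L_\beta u\|_{L^2(E)}^2 \lesssim \sum_{n\geq 1} n^{2\beta-1}\|u_n\|_{L^2(D_n(E))}^2$ (where $D_n(E)=\{y:\rho(y,E)<n\}$) and a Minkowski inequality in the indices $(j,l)$, the task reduces to bounding
\[ I(n,j,l) := n^\beta \,\bigl\|\Delta^{\kappa+1} P^{n+j+l-1}[f\mathbf{1}_F]\bigr\|_{L^2(D_n(E))}. \]

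Then I invoke Corollary~\ref{Fen4LqLpforDeltaPk} with the dichotomy of the earlier proof: if $n\leq \rho(E,F)/2^B$, the pair $(D_n(E),F)$ and the centre $x_0$ satisfy hypothesis (1) of that corollary; if $n>\rho(E,F)/2^B$, the triple $(F,x_0,n+j+l)$ satisfies hypothesis (3). Combined with \eqref{DV} and the constraint $1+\beta-\kappa\in(0,1]$, this yields
\[ I(n,j,l)\lesssim \frac{n^{1+\beta-\kappa}}{(n+j+l)^2}\frac{1}{V(x_0,\rho(E,F))^{1/2}}\left(1+\frac{\rho(E,F)}{n+j+l}\right)^{-N}\|f\|_{L^1}, \]
which is precisely the counterpart of $J(k,l,m)$ from before. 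Lemma~\ref{Fen4l2l1} converts the remaining $\ell^2$ norm in $n$ into an $\ell^1$ norm, and the $l$-sum is absorbed by $\sum_l a_l(n-l)^{\beta-\kappa}\lesssim 1$, exactly as in the earlier argument. One is left to estimate
\[ \sum_{j=0}^{k_0-1}\sum_{n\geq 1}\frac{1}{(n+j)^2}\left(1+\frac{\rho(E,F)}{n+j}\right)^{-N}, \]
whose inner sum is $\lesssim 1/\rho(E,F)$ uniformly in $j$ (split at $n+j=\rho(E,F)$), so the full sum is $\lesssim k_0/\rho(E,F)$, producing the desired bound.

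For the $g_\beta$ version the argument is strictly easier, since $\|g_\beta u\|_{L^2(E)}^2 = \sum_n n^{2\beta-1}\|u_n\|_{L^2(E)}^2$ already, so $D_n(E)$ is replaced by $E$ itself and no swap of summation is needed; the same estimates for $I(n,j,l)$ apply verbatim (in fact with improved constants). The main obstacle I anticipate is bookkeeping the dichotomy in $n$ uniformly in the auxiliary index $j$, and verifying that the $j$-summation truly produces a factor linear in $k_0$ rather than $k_0\log k_0$; this is ensured by the quadratic decay $(n+j)^{-2}$, which makes the $n$-sum summable uniformly in $j$ and absorbs the lost logarithm.
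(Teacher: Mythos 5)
Your proposal is correct and follows exactly the route the paper intends: the paper omits the proof, stating only that it is ``similar to the one of Proposition \ref{Fen4OffDiagonalDecayLbeta}'', and your argument is precisely that analogy, with the resolvent series $s\Delta\sum_m b_m P^m$ (where $\sum_m b_m=1$) replaced by the factorisation $I-P^{k}=\Delta\sum_{j=0}^{k-1}P^{j}$, the same dichotomy via Corollary \ref{Fen4LqLpforDeltaPk}, Lemma \ref{Fen4l2l1}, and the uniform-in-$j$ bound $\sum_{n\geq 1}(n+j)^{-2}\left(1+\tfrac{\rho(E,F)}{n+j}\right)^{-N}\lesssim \rho(E,F)^{-1}$ yielding the factor $k/\rho(E,F)$. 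Your observation that the $g_\beta$ case is the easier one (no tent aperture, so $E$ replaces $D_n(E)$) is also consistent with the paper's treatment.
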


\begin{prop} \label{Fen4OffDiagonalDecaygbeta2}
Let $m>0$ a real number. If we have the same assumptions as Proposition \ref{Fen4OffDiagonalDecayLbeta}, then for any  $f\in L^1(\Gamma) \cap L^2(\Gamma)$, there holds
$$\|g_\beta (k\Delta)^m [f\1_F]\|_{L^2(E)} \leq \frac{C}{V(x_0,\rho(E,F))^\frac12} \left(\frac{k}{\rho(E,F)}\right)^m \|f\|_{L^1}$$
\end{prop}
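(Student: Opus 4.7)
The plan is to mirror the proof of Proposition \ref{Fen4OffDiagonalDecayLbeta}, taking advantage of the fact that $(k\Delta)^m$ is a simpler object than the resolvent $I-(I+s\Delta)^{-1}$ treated there. I would first expand pointwise
$$g_\beta(k\Delta)^m f(x) = k^m \left(\sum_{n\geq 1} n^{2\beta-1}|\Delta^{\beta+m}P^{n-1}f(x)|^2\right)^{1/2},$$
so that Minkowski's inequality in the time variable $n$ reduces the task to bounding $\|\Delta^{\beta+m}P^{n-1}[f\1_F]\|_{L^2(E)}$, weighted by $n^{2\beta-1}$.

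The next step is to absorb the non-integer power $\Delta^{\beta+m}$ into an integer one. Let $\kappa$ be the smallest integer with $\kappa\geq\beta+m$; the Taylor expansion of $(1-z)^{\beta+m-\kappa}$ then yields the operator identity $\Delta^{\beta+m}=\sum_{l\geq 0}a_l P^l\Delta^\kappa$ with coefficients satisfying $|a_l|\lesssim(l+1)^{\kappa-\beta-m-1}$ by \cite[Lemma B.1]{Fen1}. Applying Minkowski in $l$ once more, the remaining quantity involves $\Delta^\kappa P^{n+l-1}$, to which Corollary \ref{Fen4LqLpforDeltaPk} applies directly. Exactly as in the proof of Proposition \ref{Fen4OffDiagonalDecayLbeta} (splitting according to whether $n+l\leq\rho(E,F)/2^B$ or not, and trading volume decay for distance decay by means of \eqref{Fen4PDV} with the exponent in the Gaffney estimate bumped by $d/2$), this produces
$$\|\Delta^\kappa P^{n+l-1}[f\1_F]\|_{L^2(E)}\lesssim \frac{\|f\|_{L^1}}{(n+l)^\kappa\, V(x_0,\rho(E,F))^{1/2}}\left(1+\frac{\rho(E,F)}{n+l}\right)^{-N}.$$

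It then remains to sum. Since $\kappa\geq\beta+m>\beta$, one has $2\beta-1-2\kappa<-1$, so the inner $n$-sum of the squared bound converges; a dyadic argument in the spirit of Lemma \ref{Fen4l2l1} controls it by $\max\{l,\rho(E,F)\}^{\beta-\kappa}$ up to the common factor $\|f\|_{L^1}/V(x_0,\rho(E,F))^{1/2}$. Weighting by $|a_l|$ and splitting the $l$-sum at $l\simeq\rho(E,F)$ then contributes a factor $\rho(E,F)^{-m}$, and combined with the prefactor $k^m$ this yields the target $(k/\rho(E,F))^m$.

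The main obstacle I anticipate is purely bookkeeping: one must verify that the dyadic estimate for the $n$-sum produces exactly $\max\{l,\rho(E,F)\}^{\beta-\kappa}$ and not a weaker power, since this is what ultimately delivers the sharp exponent $m$. Some mild additional care is needed to treat uniformly the two cases $\kappa=\beta+m$ (where $a_l=\delta_{l,0}$ and the $l$-sum trivializes) and $\kappa>\beta+m$, but both collapse to the same final estimate.
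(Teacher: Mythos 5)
Your proposal is correct and follows exactly the route the paper intends: the paper's own "proof" merely states that the argument is the same as for Proposition \ref{Fen4OffDiagonalDecayLbeta} (decompose $\Delta^{\beta+m}=\sum_l a_l P^l\Delta^{\kappa}$ via the Taylor series of $(1-z)^{\beta+m-\kappa}$, apply Minkowski, invoke the Gaffney estimates of Corollary \ref{Fen4LqLpforDeltaPk} with the volume converted by \eqref{Fen4PDV}, pass from $\ell^2$ to $\ell^1$ in the time variable as in Lemma \ref{Fen4l2l1}, and sum), and your bookkeeping — in particular the bound $\max\{l,\rho(E,F)\}^{\beta-\kappa}$ for the inner sum and the resulting $\rho(E,F)^{-m}$ after weighting by $|a_l|\lesssim(1+l)^{\kappa-\beta-m-1}$ — checks out and correctly handles both the integer and non-integer cases of $\beta+m$.
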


\begin{proof}
The proofs of these three propositions are similar to the one of Proposition \ref{Fen4OffDiagonalDecayLbeta} and are left to the reader. 
See also \cite[Lemma 3.5]{Fen1} and \cite[Lemmata 2.14 and 2.18]{Fen2}.
\end{proof}

As a consequence, we have the following result

\begin{theo} \label{Fen4LpboundednessofLbeta} 
Then for all $\beta>0$, the functional $L_\beta$ is bounded on $L^p(\Gamma)$ for any $p\in (1,2]$ and also bounded from $L^{1,\infty}(\Gamma)$ to $L^1(\Gamma)$.

Moreover, if $g_\beta$ is the discrete Littlewood-Paley quadratic functional defined for any $\beta>0$ as 
$$g_\beta f(x) = \left( \sum_{k=1}^\infty k^{2\beta-1} |\Delta P^{k-1}f(x)|^2 \right)^2,$$
then $g_\beta$ is also bounded on $L^p(\Gamma)$ for any $p\in (1,2]$ and bounded from $L^{1,\infty}(\Gamma)$ to $L^1(\Gamma)$.
\end{theo}

\begin{proof}
We set $A_B = P^{k_B}$. 
It is then a straightforward consequence of Theorem \ref{Fen4BKtheoremforinterpolation}, Proposition \ref{Fen4OffDiagonalDecayLbetagbeta} and Corollary \ref{Fen4LqLpforDeltaPk}.
\end{proof}

\section{Equality of Hardy spaces}

\label{EqualityHardySpaces}

In this section, $(\Gamma, \mu)$ is a weighted graph as defined in Section \ref{defgraphs} verifying \eqref{LB}. We assume that there exists a function $\beta$ bounded from below by 1 and from above by $B$ such that $\Gamma$ satisfies \eqref{DV} and \eqref{UE}. Under this circumstances, $\rho$ denotes the quasidistance $\d^\beta$ and the metric considered is the one of $\rho$ (see Section \ref{metric}).

\subsection{Definition of Hardy spaces}

\noindent We  define two kinds  of Hardy spaces. The first one is defined using molecules.

\begin{defi} \label{Fen4molec0}
Let $\epsilon \in (0,+\infty)$. A function $a\in L^2(\Gamma)$ is called an $\epsilon$-molecule (on $\Gamma$) if there exist $x\in \Gamma$, $k\in \N^*$ and a function $b \in  L^2(\Gamma)$ such that
\begin{enumerate}[(i)]
 \item $ a = [I-(I+k\Delta)^{-1}]b$, 
 \item $\ds \|b\|_{L^2(C_j(x,k))} \leq 2^{-j\epsilon} V(x,2^jk)^{-\frac{1}{2}}$, for all $j\geq 0$.
\end{enumerate}
We say that  an  $\epsilon$-molecule $a$ is associated with a vertex $x$ and an integer $k$ when we want to refer to $x$ and $k$ given by the definition.
\end{defi}

\begin{defi} \label{Fen4molec1}
 Let $\epsilon \in (0,+\infty)$. 
A function $a \in L^2(T_\Gamma)$ is called  an  $\epsilon$-molecule (on $T_\Gamma$) if there exist $x\in \Gamma$, $k\in \N^*$ and a function $b \in L^2(\Gamma)$ such that
\begin{enumerate}[(i)]
 \item $a = \sqrt k d (I+k\Delta)^{-\frac{1}{2}} b$;
 \item $\|b\|_{L^2(C_j(x,k))} \leq 2^{-j\epsilon} V(x,2^jk)^{-\frac{1}{2}}$ for all $j\geq 0$.
\end{enumerate}
\end{defi}

Corollaries \ref{Fen4BoundedMolecules0} and \ref{Fen4BoundedMolecules1} state

\begin{prop} \label{Fen4sizemolec}
As it has been seen in Propositions \ref{Fen4BoundedMolecules0} and \ref{Fen4BoundedMolecules1}, when $a$ is a molecule occurring in Definition \ref{Fen4molec0} or in Definition \ref{Fen4molec1}, one has 
$$\|a\|_{L^1} \lesssim 1.$$
\end{prop}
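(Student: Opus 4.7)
The plan is to observe that Proposition \ref{Fen4sizemolec} is essentially a restatement of Corollaries \ref{Fen4BoundedMolecules0} and \ref{Fen4BoundedMolecules1} in the language of molecules. No new estimate is required; the work was already carried out in those two corollaries and all that remains is to match the hypotheses.

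First I would handle the case of a molecule $a$ on $\Gamma$ in the sense of Definition \ref{Fen4molec0}. By definition, there exist $x\in \Gamma$, $k\in \N^*$ and $b\in L^2(\Gamma)$ with $a = [I-(I+k\Delta)^{-1}]b$ and $\|b\|_{L^2(C_j(x,k))} \leq 2^{-j\epsilon} V(x,2^jk)^{-1/2}$ for every $j\geq 0$. The decay condition on $b$ is exactly hypothesis \eqref{estimateonb} of Corollary \ref{Fen4BoundedMolecules0} (note that $V(2^j B) = V(x,2^jk)$ in the notation there), so applying that corollary yields
\[
\|a\|_{L^1(\Gamma)} = \|[I-(I+k\Delta)^{-1}]b\|_{L^1(\Gamma)} \leq C_\epsilon,
\]
with $C_\epsilon$ depending only on $\epsilon$ and $\Gamma$.

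Next I would deal with a molecule $a$ on $T_\Gamma$ in the sense of Definition \ref{Fen4molec1}, so that $a = \sqrt k\, d(I+k\Delta)^{-1/2}b$ with $b$ satisfying the same decay estimate. Using the identity $|df(x,\cdot)|_{T_x} = \nabla f(x)$ (recorded in the remark following the definition of $d$ and $d^*$), together with the definition of the $L^1(T_\Gamma)$-norm, one has
\[
\|a\|_{L^1(T_\Gamma)} = \bigl\| x\mapsto \sqrt k\,|d(I+k\Delta)^{-1/2}b(x,\cdot)|_{T_x}\bigr\|_{L^1(\Gamma)} = \|\sqrt k\, \nabla (I+k\Delta)^{-1/2} b\|_{L^1(\Gamma)}.
\]
Hypothesis \eqref{estimateonbb} of Corollary \ref{Fen4BoundedMolecules1} is identical to condition (ii) of Definition \ref{Fen4molec1}, so that corollary applies directly and gives $\|a\|_{L^1(T_\Gamma)} \leq C_\epsilon$.

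There is no real obstacle: the two corollaries were proved precisely to supply these molecule bounds, and the proposition simply collects them. The only thing to be careful about is the identification of the $L^1(T_\Gamma)$-norm with $\|\sqrt k \nabla(I+k\Delta)^{-1/2}b\|_{L^1(\Gamma)}$ in the second case, which is immediate from the definition of $|\cdot|_{T_x}$ and of $\nabla$.
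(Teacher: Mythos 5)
Your proof is correct and is exactly what the paper intends: Proposition \ref{Fen4sizemolec} is simply the two Corollaries \ref{Fen4BoundedMolecules0} and \ref{Fen4BoundedMolecules1} restated in the language of molecules, and you match the hypotheses precisely, including the needed identification $\|\sqrt k\, d(I+k\Delta)^{-1/2}b\|_{L^1(T_\Gamma)} = \|\sqrt k\, \nabla(I+k\Delta)^{-1/2}b\|_{L^1(\Gamma)}$ via $|df(x,\cdot)|_{T_x}=\nabla f(x)$. Nothing is missing.
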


\begin{defi}
Let $\epsilon \in (0,+\infty)$. We say that a function $f$ on $\Gamma$ (resp. on $T_\Gamma$) belongs to $H^1_{mol,\epsilon}(\Gamma)$ (resp. $H^1_{mol,\epsilon}(T_\Gamma)$) if $f$ admits an $\epsilon$-molecular representation, 
that is if there exist a sequence $(\lambda_i)_{i\in \N} \in \ell^1$ and a sequence $(a_i)_{i\in \N}$ of $\epsilon$-molecules on $\Gamma$ (resp. on $T_\Gamma$) such that
\begin{equation} \label{Fen4sumf}
f=\sum_{i=0}^\infty \lambda_i a_i
\end{equation}
where the convergence of the  series  to $f$ holds pointwise.  Define, for all $f\in H^1_{mol,\epsilon}(\Gamma)$ (resp. $f \in H^1_{mol,\epsilon}(T_\Gamma)$),  $$\|f\|_{H^1_{mol,\epsilon}} = \inf\left\{ \sum_{j=0}^\infty |\lambda_j|, \ \sum_{j=0}^\infty \lambda_j a_j, \text{ is an $\epsilon$-molecular representation of $f$} \right\}.$$
\end{defi}


\begin{prop} \label{Fen4H1complete} Let $\epsilon \in (0,+\infty)$. Let $E$ be the graph $\Gamma$ or the tangent bundle $T_\Gamma$.
 Then:
\begin{enumerate}[(i)]
\item
the map $f\mapsto \left\Vert f\right\Vert_{H^1_{mol,\epsilon}}$ is a norm on the space $H^1_{mol,\epsilon}(E)$,
\item
the space $H^1_{mol,\epsilon}(E)$ is complete and continuously embedded in $L^1(E)$. \end{enumerate}
\end{prop}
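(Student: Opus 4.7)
The plan is to leverage Proposition \ref{Fen4sizemolec} as the starting point: every $\epsilon$-molecule $a$ (on $\Gamma$ or on $T_\Gamma$) satisfies $\|a\|_{L^1(E)} \lesssim 1$ with a constant depending only on $\epsilon$ and $\Gamma$. My first step is to deduce from this the continuous embedding $H^1_{mol,\epsilon}(E) \hookrightarrow L^1(E)$, which is the hinge of both statements (i) and (ii). For any molecular representation $f = \sum_i \lambda_i a_i$ with $\sum_i |\lambda_i| < +\infty$, the crude pointwise bound $|a_i(x)| \leq \|a_i\|_{L^1}/m(x) \lesssim 1/m(x)$ makes the series absolutely convergent at every vertex (and similarly at every edge in the $T_\Gamma$ case, since $p(x,y) m(y) > 0$ whenever $y \sim x$), while summing over $E$ gives
\[ \|f\|_{L^1(E)} \leq \sum_i |\lambda_i|\,\|a_i\|_{L^1(E)} \lesssim \sum_i |\lambda_i|. \]
Taking the infimum over all admissible representations of $f$ then yields $\|f\|_{L^1(E)} \lesssim \|f\|_{H^1_{mol,\epsilon}(E)}$.

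With this embedding in hand, the norm axioms of (i) are immediate. Homogeneity $\|\lambda f\|_{H^1_{mol,\epsilon}} = |\lambda|\,\|f\|_{H^1_{mol,\epsilon}}$ and the triangle inequality $\|f+g\|_{H^1_{mol,\epsilon}} \leq \|f\|_{H^1_{mol,\epsilon}} + \|g\|_{H^1_{mol,\epsilon}}$ come from rescaling and concatenation of molecular representations respectively, while definiteness follows because $\|f\|_{H^1_{mol,\epsilon}} = 0$ forces $\|f\|_{L^1} = 0$, hence $f \equiv 0$.

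For the completeness part of (ii), I plan to use the classical reduction to absolutely convergent series. Given a sequence $(g_k) \subset H^1_{mol,\epsilon}(E)$ with $\sum_k \|g_k\|_{H^1_{mol,\epsilon}} < +\infty$, I would approximate each infimum to select a molecular representation $g_k = \sum_j \lambda_{k,j} a_{k,j}$ with $\sum_j |\lambda_{k,j}| \leq 2\|g_k\|_{H^1_{mol,\epsilon}}$. The doubly indexed family $\{\lambda_{k,j}\}$ then has total variation $\sum_{k,j} |\lambda_{k,j}| \leq 2\sum_k \|g_k\|_{H^1_{mol,\epsilon}} < +\infty$, so by the first paragraph the series $\sum_{k,j} \lambda_{k,j} a_{k,j}$ converges both in $L^1(E)$ and pointwise to some $f \in L^1(E)$. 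By construction this very series is a molecular representation of $f$, so $f \in H^1_{mol,\epsilon}(E)$, and the tail bound
\[ \Bigl\|f - \sum_{k=1}^{K} g_k\Bigr\|_{H^1_{mol,\epsilon}} \leq 2\sum_{k>K} \|g_k\|_{H^1_{mol,\epsilon}} \to 0 \text{ as } K\to\infty \]
gives $\sum_k g_k = f$ in $H^1_{mol,\epsilon}(E)$. The only place requiring care is this reorganization step, which is handled cleanly by the $L^1$-embedding: it automatically upgrades the $L^1$-convergence of the doubly indexed sum to the pointwise convergence demanded by the definition of $H^1_{mol,\epsilon}$, and identifies the limit unambiguously.
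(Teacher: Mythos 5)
Your proposal is correct and follows essentially the same route as the paper: both deduce the continuous embedding into $L^1(E)$ from the uniform $L^1$-bound on molecules (Proposition \ref{Fen4sizemolec}), obtain definiteness of the norm from that embedding, and establish completeness via the criterion that absolutely convergent series must converge. You merely spell out the re-indexing of the doubly indexed molecular representations and the upgrade from $L^1$- to pointwise convergence, details the paper leaves implicit.
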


\begin{proof}
Proposition \ref{Fen4sizemolec} shows that, if $f$ is in $H^1_{mol,\epsilon}(E)$ the series \eqref{Fen4sumf} converges in $L^1(E)$, and therefore converges to $f$ in $L^1(E)$.  This yields $(i)$ and the embeddings in $(ii)$. 
Moreover, a normed linear vector space $X$ is complete if and only if it has the property
$$\sum_{j=0}^\infty \|f_j\|_{X} < +\infty \Longrightarrow \sum_{j=0}^\infty f_j  \mbox{ converges in }  X.$$
Using this criterion, the completeness of the Hardy spaces under consideration is a straightforward consequence of the fact that $\|a\|_{L^1} \lesssim 1$ whenever $a$ is a molecule.  See also the argument for the completeness of $H^1_L$ in \cite{HoMay}, p. 48.
\end{proof}

The second kind  of Hardy spaces  is defined via quadratic functionals. 

\begin{defi}
 Define, for $\beta>0$, the quadratic functional $L_\beta$ on $L^2(\Gamma)$ by
$$L_\beta f(x) =  \left( \sum_{(y,k) \in \gamma(x)} \frac{k^{2\beta-1}}{V(x,k)} |\Delta^{\beta} P^{k-1} f(y)|^2m(y) \right)^{\frac{1}{2}}$$
where $\gamma(x) = \left\{ (y,k) \in \Gamma \times \N^*, \, \rho(x,y) < k \right\}$.
\end{defi}

\begin{defi}
The space $E^1_{quad,\beta}(\Gamma)$ is defined for all $\beta >0$ by
$$E^1_{quad,\beta}(\Gamma) : = \left\{ f\in L^2(\Gamma), \, \|L_\beta f\|_{L^1} < +\infty\right\}.$$
It is outfitted with the norm
$$\|f\|_{H^1_{quad,\beta}} : = \|L_\beta f\|_{L^1}.$$
The space $E^1_{quad,\beta}(T_\Gamma)$ is defined from $E^1_{quad,\beta}$ as
$$E^1_{quad,\beta}(T_\Gamma) : = \left\{ f\in H^2(T_\Gamma), \,  \Delta^{-\frac12} d^* f \in E^1_{quad,\beta}(\Gamma) \right\}.$$
It is outfitted with the norm
$$\|f\|_{H^1_{quad,\beta}} : = \|L_\beta \Delta^{-\frac12} d^* f\|_{L^1}.$$
\end{defi}

\begin{rmk}
The fact that the map $f\mapsto \|f\|_{H^1_{quad,\beta}}$ is a norm is proven in \cite[Remark 1.20]{Fen2}. 
\end{rmk}

\subsection{$H^1_{mol} \cap L^2 \subset E^1_{quad}$}

\begin{prop} \label{Fen4Main11}
Let $\epsilon>0$ and $\beta >0$. Then $H^1_{mol,\epsilon}(\Gamma) \cap L^2(\Gamma) \subset E^1_{quad,\beta}(\Gamma)$ and
$$\|f\|_{H^1_{quad,\beta}} \lesssim \|f\|_{H^1_{mol,\epsilon}} \qquad \forall f\in H^1_{mol,\epsilon}(\Gamma) \cap L^2(\Gamma).$$
\end{prop}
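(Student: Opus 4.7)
The plan is to reduce the claim, by the molecular decomposition and sublinearity of $L_\beta$, to the uniform estimate $\|L_\beta a\|_{L^1(\Gamma)} \lesssim 1$ for every $\epsilon$-molecule $a$. Indeed, if $f = \sum_i \lambda_i a_i$ is an $\epsilon$-molecular representation with $\sum |\lambda_i| \leq 2\|f\|_{H^1_{mol,\epsilon}}$, then the convergence holds in $L^1$ (hence in $L^2$ on the compatible portion where $f\in L^2$), and
\[
\|L_\beta f\|_{L^1} \;\leq\; \sum_i |\lambda_i|\,\|L_\beta a_i\|_{L^1},
\]
so uniform boundedness of the molecular norms closes the argument.

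Now fix an $\epsilon$-molecule $a = [I-(I+k\Delta)^{-1}]b$ associated with $(x_0,k)$, and write $C_j = C_j(x_0,k)$ for the annular decomposition centered at $x_0$ at scale $k$. I split both the target space and the source of $b$ along these annuli: $b = \sum_{j\geq 0} b\mathbf{1}_{C_j}$, and
\[
\|L_\beta a\|_{L^1(\Gamma)} \;=\; \sum_{i\geq 0} \|L_\beta a\|_{L^1(C_i)}
\;\leq\; \sum_{i\geq 0} V(x_0,2^{B+i+1}k)^{1/2} \sum_{j\geq 0} \|L_\beta [I-(I+k\Delta)^{-1}](b\mathbf{1}_{C_j})\|_{L^2(C_i)},
\]
using Cauchy--Schwarz on each $C_i$. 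For each pair $(i,j)$ I distinguish two regimes:
\begin{itemize}
\item[(a)] \emph{Close pairs}, $|i-j|<B+1$. Use the $L^2$-boundedness of $L_\beta$ (Theorem~\ref{Fen4LpboundednessofLbeta}) and of $I-(I+k\Delta)^{-1}$ to obtain
\[
\|L_\beta [I-(I+k\Delta)^{-1}](b\mathbf{1}_{C_j})\|_{L^2(C_i)} \;\lesssim\; \|b\|_{L^2(C_j)} \;\leq\; 2^{-j\epsilon} V(x_0,2^jk)^{-1/2}.
\]
\item[(b)] \emph{Far pairs}, $|i-j|\geq B+1$. Then $\rho(C_i,C_j)\gtrsim 2^{\max(i,j)}k$, and the annulus farther from $x_0$ satisfies the centering hypothesis of Proposition~\ref{Fen4OffDiagonalDecayLbeta}, so that proposition yields
\[
\|L_\beta [I-(I+k\Delta)^{-1}](b\mathbf{1}_{C_j})\|_{L^2(C_i)} \;\lesssim\; \frac{1}{V(x_0,2^{\max(i,j)}k)^{1/2}}\,\frac{k}{2^{\max(i,j)}k}\,\|b\mathbf{1}_{C_j}\|_{L^1}.
\]
Bounding $\|b\mathbf{1}_{C_j}\|_{L^1}\leq V(x_0,2^jk)^{1/2}\|b\|_{L^2(C_j)}\leq 2^{-j\epsilon}$ by Cauchy--Schwarz and the molecular decay, the contribution of far pairs becomes $\lesssim 2^{-\max(i,j)}\cdot 2^{-j\epsilon}/V(x_0,2^{\max(i,j)}k)^{1/2}$.
\end{itemize}

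Reinjecting these two estimates into the main sum, the close-pair contribution gives $\sum_i (V(x_0,2^ik)/V(x_0,2^ik))^{1/2} 2^{-i\epsilon}\lesssim 1$, and the far-pair contribution gives (after absorbing volume ratios via Proposition~\ref{Fen4propDV})
\[
\sum_{i,j\,:\,|i-j|\geq B+1} \left(\frac{V(x_0,2^ik)}{V(x_0,2^{\max(i,j)}k)}\right)^{1/2} 2^{-\max(i,j)} 2^{-j\epsilon} \;\lesssim\; \sum_{i,j} 2^{-\max(i,j)}2^{-j\epsilon} \;\lesssim\; 1.
\]
Both double series converge by elementary geometric bounds, and the implicit constants depend only on $\epsilon$, $\beta$, and the structural constants of $\Gamma$.

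The main obstacle is not really a single hard step but rather matching the decay and the volume growth across the two regimes: the $L^2$ off-diagonal estimate from Proposition~\ref{Fen4OffDiagonalDecayLbeta} gains a factor $k/\rho(C_i,C_j)$ and a volume factor $V(x_0,\rho(C_i,C_j))^{-1/2}$, whereas the Cauchy--Schwarz lift to $L^1$ costs $V(x_0,2^ik)^{1/2}$. One must verify that the centering hypothesis of Proposition~\ref{Fen4OffDiagonalDecayLbeta} applies with base point $x_0$ (which is precisely the reason for separating $|i-j|<B+1$ from $|i-j|\geq B+1$) and that the residual volume ratio is tamed by doubling, leaving a pure geometric series in $2^{-j\epsilon}$ and $2^{-i}$.
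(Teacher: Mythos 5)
Your proposal is correct and follows essentially the same route as the paper: reduce to a uniform $L^1$ bound on molecules, decompose both the source $b$ and the target space over the annuli $C_j(x_0,k)$, treat close pairs by the $L^2$-boundedness of $L_\beta$ and of $I-(I+k\Delta)^{-1}$, and treat far pairs by Proposition~\ref{Fen4OffDiagonalDecayLbeta}. One small slip: for far pairs it is the annulus \emph{nearer} to $x_0$ (whose sup-distance $2^{B+\min(i,j)+1}k$ is below $\rho(C_i,C_j)\geq 2^{\max(i,j)}k$) that verifies hypothesis (i) or (ii) of that proposition, not the farther one, but since either hypothesis suffices and the resulting bound is the same, your estimate stands.
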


\begin{proof}
 The proof follows the idea of the one in \cite[Proposition 4.1]{Fen2}. Let $f\in H^1_{mol,\epsilon} \cap L^2(\Gamma)$. 
Then there exist $(\lambda_i)_{i\in \N} \in \ell^1$ and a sequence $(a_i)_{i\in \N}$ of $\epsilon$-molecules such that $f = \sum \lambda_i a_i$ where the convergence is in $L^1(\Gamma)$ and 
$$\sum_{i\in \N} |\lambda_i| \simeq \|f\|_{H^1_{mol,M,p,\epsilon}}.$$

First, since  $\|P^k\|_{1\to 1}\leq 1$  for all $k\in \N$, the operators $\Delta^\beta$ and then $\Delta^\beta P^{k-1}$ are $L^1$-bounded for $\beta >0$ (see \cite{CSC}). Consequently,
$$\Delta^\beta P^{l-1} \sum_{i\in \N} \lambda_i a_i = \sum_{i\in \N} \lambda_i \Delta^\beta P^{l-1} a_i$$
with convergence in $L^1(\Gamma)$.
Since the $L^1$-convergence implies the pointwise convergence, it yields, for all $x\in \Gamma$,
\[\begin{split}
   \left|\Delta^\beta P^{k-1} \sum_{i\in \N} \lambda_i a_i(x)\right| & = \left|\sum_{i\in \N} \lambda_i \Delta^\beta P^{k-1} a_i(x) \right| \\
& \leq \sum_{i\in \N} |\lambda_i| \left|\Delta^\beta P^{k-1} a_i(x) \right|.
  \end{split}\]
From here, the estimate
$$\|L_\beta f\|_{L^1}  = \left\| L_\beta \sum_{i\in \N} \lambda_i a_i \right\|_{L^1} \lesssim \sum_{i\in \N} |\lambda_i| \|L_\beta a_i\|_{L^1}$$
is  just  a consequence of the generalized Minkowski inequality.

It remains to prove that there exists a constant $C$ such that for all $\epsilon$-molecules $a$, one has
\begin{equation} \label{Fen4BoundenessOnAtoms}
 \|L_\beta a\|_{L^1} \leq C.
\end{equation}

Let $x\in \Gamma$ and $k\in \N^*$ associated with the $\epsilon$-molecule $a$. By H\"older inequality and the doubling property, we can write
\begin{equation} 
 \|L_\beta a\|_{L^1} \lesssim  \sum_{j=0}^\infty V(x,2^js)^{\frac{1}{2}} \|L_\beta a\|_{L^2(C_j(x,k))}.
\end{equation}
Besides, we write $a =  (I-(I+k\Delta)^{-1})b$ and then $\ds b= \sum_{i\geq 0} [\1_{C_i(x,k)} b]$ to get
\begin{equation}
\begin{split}
 \|L_\beta a\|_{L^1} & \lesssim  \sum_{i,j\geq 0} V(x,2^js)^{\frac{1}{2}} \|L_\beta (I-(I+k\Delta)^{-1}) [\1_{C_i(x,k)} b] \|_{L^2(C_j(x,k))} \\
 & \lesssim \sum_{|i-j| \leq B} V(x,2^jk)^{\frac{1}{2}} \|L_\beta (I-(I+k\Delta)^{-1}) [\1_{C_i(x,k)} b] \|_{L^2(C_j(x,k))} \\
& \qquad + \sum_{|i-j|\geq B+1} V(x,2^jk)^{\frac{1}{2}} \|L_\beta (I-(I+k\Delta)^{-1}) [\1_{C_i(x,k)} b] \|_{L^2(C_j(x,k))} \\
& : = I_1 + I_2
 \end{split}
\end{equation}
The term $I_1$ is evalutated first. The $L^2$-boundedness of $L_\beta$ (see Theorem \ref{Fen4LpboundednessofLbeta}) and the uniform $L^2$-boundedness of $I-(I+s\Delta)^{-1}$ implies 
$$\left\| L_\beta (I-(I+s\Delta)^{-1}) [\1_{C_i(x,k)} b] \right\|_{L^2(C_j(x,k))} \lesssim \|b\|_{L^2(C_i(x,k))} \leq \frac{2^{-i\epsilon}}{V(x,2^{i}k)^\frac12}.$$
And with the doubling property \eqref{DV}, 
\begin{equation}\label{I2estimate}
I_1 \lesssim \sum_{|i-j| \leq B} 2^{-i\epsilon} \left(\dfrac{V(x,2^jk)}{V(x,2^ik)}\right)^{\frac{1}{2}} \lesssim \sum_{i\geq 0} 2^{-i\epsilon}  \lesssim 1.
\end{equation} 
We turn to the estimate of $I_2$. If $|i-j| \geq B+1$, one has
$$\rho(C_j(x,k),C_i(x,k)) \geq 2^{\max\{i,j\}}k \geq 2^{\min\{i,j\}+B+1} k \geq \min\left\{ \sup_{y\in C_j(x,k)} \rho(x,y), \sup_{y\in C_i(x,k)} \rho(x,y) \right\}.$$
That is $C_j(x,k)$, $C_i(x,k)$ and $x$ satisfy the assumption of Proposition \ref{Fen4OffDiagonalDecayLbeta}. Consequently, 
\[\begin{split}
 I_2 & \lesssim \sum_{|i-j|\geq B+1} V(x,2^jk)^\frac12 2^{-j(d+1)} \left\|b\right\|_{L^2(C_i(x,k))} \\
& \lesssim  \sum_{|i-j|\geq B+1} 2^{-j-i\epsilon} 2^{-jd} \left(\dfrac{V(x,2^jk)}{V(x,2^jk)}\right)^{\frac{1}{2}} \\
& \lesssim \sum_{|i-j|\geq B+1}  2^{-j-i\epsilon} \\
& \lesssim  1.
  \end{split}\]
where the third line is a consequence of Proposition \ref{Fen4propDV}.
We conclude by noticing that the estimates on $I_1$ and $I_2$ provides exactly \eqref{Fen4BoundenessOnAtoms}.
\end{proof}

\begin{prop} \label{Fen4Main21}
Let $\epsilon>0$. Then $H^1_{mol,\epsilon}(T_\Gamma) \cap H^2(T_\Gamma) \subset E^1_{quad,\frac12}(T_\Gamma)$ and
$$\|f\|_{H^1_{quad,\frac12}} \lesssim \|f\|_{H_{mol,\epsilon}^1} \qquad \forall f\in H^1_{mol,\epsilon}(T_\Gamma) \cap H^2(T_\Gamma).$$
\end{prop}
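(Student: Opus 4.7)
The plan is to follow the structure of the proof of Proposition \ref{Fen4Main11} almost verbatim, with two changes: (i) a preliminary algebraic identity that rewrites $\Delta^{-\frac12} d^*$ applied to a $T_\Gamma$-molecule as a bounded functional calculus expression in $\Delta$, and (ii) the use of Proposition \ref{Fen4OffDiagonalDecayLbeta2} (the off-diagonal estimate for $L_{1/2}(I-(I+s\Delta)^{-1})^{1/2}$) instead of Proposition \ref{Fen4OffDiagonalDecayLbeta}.

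Given $f\in H^1_{mol,\epsilon}(T_\Gamma)\cap H^2(T_\Gamma)$, write $f=\sum_i \lambda_i a_i$ with $\sum|\lambda_i|\simeq\|f\|_{H^1_{mol,\epsilon}}$, where each $a_i$ is an $\epsilon$-molecule on $T_\Gamma$. Since $d^*$ is $L^1$-bounded (Proposition \ref{Fen4danddstar}) and $\Delta^{-\frac12}d^*$ is an isometry from $H^2(T_\Gamma)$ to $L^2(\Gamma)$, one can pass the operator $\Delta^{-\frac12}d^*$ inside the series (setting $g_i:=\Delta^{-\frac12}d^*a_i$) and use the sublinearity of $L_{\frac12}$ together with the generalized Minkowski inequality to reduce the proposition to the uniform estimate
\[
\|L_{\frac12}\,\Delta^{-\frac12}d^*a\|_{L^1}\lesssim 1
\]
for every $\epsilon$-molecule $a$ on $T_\Gamma$.

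Now if $a=\sqrt{k}\,d(I+k\Delta)^{-\frac12}b$ is such a molecule, the identity $d^*d=\Delta$ yields
\[
\Delta^{-\frac12}d^*a=\sqrt{k}\,\Delta^{\frac12}(I+k\Delta)^{-\frac12}b=\bigl[I-(I+k\Delta)^{-1}\bigr]^{\frac12}b,
\]
so the task becomes proving $\|L_{\frac12}[I-(I+k\Delta)^{-1}]^{\frac12}b\|_{L^1}\lesssim 1$. Splitting $b=\sum_i \mathbf{1}_{C_i(x,k)}b$ and decomposing the $L^1$ norm over the annuli $C_j(x,k)$ (using Hölder and \eqref{DV} as in the proof of Proposition \ref{Fen4Main11}), one is reduced to controlling
\[
\sum_{i,j\ge 0} V(x,2^jk)^{\frac12}\,\bigl\|L_{\frac12}[I-(I+k\Delta)^{-1}]^{\frac12}[\mathbf{1}_{C_i(x,k)}b]\bigr\|_{L^2(C_j(x,k))}.
\]
For close annuli $|i-j|\le B$, one invokes the $L^2$-boundedness of $L_{\frac12}$ (Theorem \ref{Fen4LpboundednessofLbeta}) and the fact that $[I-(I+k\Delta)^{-1}]^{\frac12}$ is a contraction on $L^2$ (spectral calculus: its symbol lies in $[0,1]$), yielding the geometric bound $\sum_i 2^{-i\epsilon}\lesssim 1$. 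For far annuli $|i-j|\ge B+1$, one has $\rho(C_j(x,k),C_i(x,k))\ge 2^{\max\{i,j\}}k$, so $(C_j(x,k),C_i(x,k),x)$ satisfies the assumption of Proposition \ref{Fen4OffDiagonalDecayLbeta2} with $s=k$, giving a factor $(k/\rho(C_j,C_i))^{1/2}\lesssim 2^{-\max\{i,j\}/2}$ that makes the double sum converge.

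The main obstacle is conceptual rather than technical: one must verify that Proposition \ref{Fen4OffDiagonalDecayLbeta2}, which supplies only a $(s/\rho(E,F))^{1/2}$ decay (weaker than the $s/\rho(E,F)$ of Proposition \ref{Fen4OffDiagonalDecayLbeta}), still combines with the $V(x,2^jk)^{1/2}$ volume factor and the molecular decay $2^{-i\epsilon}/V(x,2^ik)^{1/2}$ to produce a summable series. This works because the geometric factor $2^{-\max\{i,j\}/2}$ is enough to beat the polynomial volume growth from \eqref{Fen4PDV} when paired with the molecular decay $2^{-i\epsilon}$; everything else is the same bookkeeping as in Proposition \ref{Fen4Main11}.
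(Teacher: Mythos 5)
Your proposal is correct and follows essentially the same route as the paper: reduce to uniform $L^1$ bounds on $L_{1/2}\Delta^{-1/2}d^*a$ for molecules, use $d^*d=\Delta$ to rewrite this as $L_{1/2}[I-(I+k\Delta)^{-1}]^{1/2}b$, and then repeat the annular decomposition of Proposition \ref{Fen4Main11} with Proposition \ref{Fen4OffDiagonalDecayLbeta2} in place of Proposition \ref{Fen4OffDiagonalDecayLbeta}. Your check that the weaker $(s/\rho(E,F))^{1/2}$ decay still sums against the volume ratios is a worthwhile addition that the paper leaves implicit.
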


\begin{proof}
The proof is similar to the previous one then we will point out only the main differences. 
Since $d^*$ and $P^l$ are $L^1$-bounded, it is enough to prove the uniform boundedness of $\|L_\frac{1}{2} \Delta^{-\frac12} d^* a\|_{L^1}$ when $a$ is an $\epsilon$-molecule.

However, notice that if $a=\sqrt k d (I+k\Delta)^{-\frac12} b$ (with $k$, $b$ associated with $a$), one has
$$\|L_\frac{1}{2} \Delta^{-\frac12} d^* a\|_{L^1} = \|L_\frac{1}{2} (I-(I+k\Delta)^{-1})^{\frac12} b\|_{L^1}.$$
We conclude then as in Proposition \ref{Fen4Main11}, using Proposition \ref{Fen4OffDiagonalDecayLbeta2} instead of Proposition \ref{Fen4OffDiagonalDecayLbeta}.
\end{proof}

\begin{prop} \label{Fen4Main32}
The space $E^1_{0}(\Gamma)$ is continuously embedded in $E^1_{quad,\frac12}(\Gamma)$ and
$$\|f\|_{H^1_{quad,\frac12}} \lesssim \|f\|_{E^1_0} \qquad \forall f\in E^1_0(\Gamma).$$
\end{prop}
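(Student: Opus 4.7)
The plan mirrors closely the proof of Proposition \ref{Fen4Main11}, replacing molecules by atoms. First I would use the sublinearity of $L_{1/2}$ together with the triangle inequality: for a finite atomic representation $f = \sum_{i=0}^{N} \lambda_i a_i$ one has
\[
\|L_{1/2} f\|_{L^1} \leq \sum_{i=0}^{N} |\lambda_i|\, \|L_{1/2} a_i\|_{L^1},
\]
so taking the infimum over representations reduces the proposition to the uniform bound
\[
\|L_{1/2} a\|_{L^1} \lesssim 1
\]
for every atom $a = (I-P^{k_0})b$ with $b$ supported in $B = B(x_0,k_0)$ and $\|b\|_{L^2} \leq V(x_0,k_0)^{-1/2}$.

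Next, I would cut the $L^1$ norm into the annuli $C_j(x_0,k_0)$ and apply Cauchy--Schwarz on each:
\[
\|L_{1/2} a\|_{L^1} \lesssim \sum_{j\geq 0} V(x_0, 2^{B+j+1} k_0)^{1/2}\, \|L_{1/2} a\|_{L^2(C_j(x_0,k_0))}.
\]
For the inner term $j=0$ I would invoke the $L^2$-boundedness of $L_{1/2}$ (Theorem \ref{Fen4LpboundednessofLbeta}) together with the uniform $L^2$-boundedness of $I-P^{k_0}$ (since $P$ is a contraction on $L^2$) to get $\|L_{1/2} a\|_{L^2} \lesssim \|b\|_{L^2} \leq V(x_0,k_0)^{-1/2}$, which combined with \eqref{DV} yields a contribution $\lesssim 1$.

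For the tail $j\geq 1$ the situation $(E,F) = (C_j(x_0,k_0), B)$ satisfies the geometric hypothesis of Proposition \ref{Fen4OffDiagonalDecayLbetagbeta}, because $\sup_{y\in B}\rho(x_0,y) < k_0 \leq 2^j k_0 \leq \rho(C_j,B)$. Applying that proposition with $k=k_0$ gives
\[
\|L_{1/2} (I-P^{k_0})[b\1_B]\|_{L^2(C_j(x_0,k_0))} \lesssim \frac{1}{V(x_0,2^j k_0)^{1/2}}\cdot \frac{k_0}{2^j k_0}\, \|b\|_{L^1},
\]
and since $\|b\|_{L^1} \leq V(x_0,k_0)^{1/2}\|b\|_{L^2} \leq 1$, the $j$-th term is bounded by $2^{-j}$, which is summable. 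Adding the two pieces produces \eqref{Fen4BoundenessOnAtoms} for atoms and concludes the proof.

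The only point requiring care is that Proposition \ref{Fen4OffDiagonalDecayLbetagbeta} is stated for $(I-P^k)$ with the generic integer $k$ assignment matching $k_0$ here, and that the geometric assumption truly holds from $j\geq 1$ onwards; once this is verified, the summation is a straightforward geometric series and no further technical obstacle appears, the argument being essentially a transcription of the molecular case treated in Proposition \ref{Fen4Main11}.
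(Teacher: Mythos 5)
Your proposal is correct and follows essentially the same route as the paper, which simply reduces to a uniform bound $\|L_{1/2}a\|_{L^1}\lesssim 1$ on atoms and then runs the annular decomposition of Proposition \ref{Fen4Main11} with Proposition \ref{Fen4OffDiagonalDecayLbetagbeta} (the $(I-P^k)$ version of the off-diagonal decay) in place of Proposition \ref{Fen4OffDiagonalDecayLbeta}. Your verification that the separation hypothesis already holds for $j\geq 1$ (because $b$ is supported in the single ball $B$ rather than spread over all annuli as for molecules) and your use of $\|b\|_{L^1}\leq V(B)^{1/2}\|b\|_{L^2}\leq 1$ are exactly the details the paper leaves implicit.
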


\begin{proof}
We refer to subsection \ref{Fen4InterpolationSubsection} for the definition of $E^1_0(\Gamma)$ and of atoms.

Due to the definition of $E^1_0(\Gamma)$, 
we only need to check that the quantity $\|L_\beta a\|_{L^1}$ is uniformly bounded on atoms.
The proof is analogous to the one of Proposition \ref{Fen4Main11}, using Proposition \ref{Fen4OffDiagonalDecayLbetagbeta}  instead of Proposition \ref{Fen4OffDiagonalDecayLbeta}.
\end{proof}

\subsection{$E^1_{quad} \subset H^1_{mol} \cap L^2$}

Let us introduce first the functional $\pi_{\eta,\beta}:  \ T^2(\Gamma)\to L^2(\Gamma)$ defined for any real $\beta>0$ and any integer $\eta > \beta$ by
$$\pi_{\eta,\beta} F(x) = \sum_{l\geq 1} \frac{c_l^\eta}{l^\beta} \left[\Delta^{\eta-\beta}(I+P)^\eta P^{l-1} F(.,l)\right](x)$$
where $\ds \sum_{l\geq 1} c_{l}^\eta z^{l-1}$ is the Taylor series of the function $(1-z)^{-\eta}$. Let us recall Lemma 4.8 in \cite{Fen2}:

\begin{lem}
 The operator $\pi_{\eta,\beta}$ is bounded from $T^2(\Gamma)$ to $L^2(\Gamma)$.
\end{lem}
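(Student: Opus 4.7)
The plan is to prove boundedness by duality. First observe that, by Fubini and the doubling property \eqref{DV},
$$\|F\|_{T^2}^2 \simeq \sum_{l\geq 1}\sum_{y\in\Gamma} \frac{m(y)}{l}|F(y,l)|^2,$$
since for each $(y,l)$ one has $\sum_{x:\,\rho(x,y)<l}\frac{m(x)}{V(x,l)} \simeq 1$ by doubling. Hence it suffices to show that for every $g\in L^2(\Gamma)$,
$$|\langle \pi_{\eta,\beta}F, g\rangle_{L^2}| \lesssim \|F\|_{T^2}\,\|g\|_{L^2}.$$

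Expanding the pairing and using the self-adjointness of $\Delta$ and $P$ yields
$$\langle \pi_{\eta,\beta}F, g\rangle = \sum_{l\geq 1}\sum_{y\in\Gamma} \frac{c_l^\eta}{l^\beta}\,m(y)\,F(y,l)\,\phi_l(y), \qquad \phi_l := (I+P)^\eta P^{l-1}\Delta^{\eta-\beta}g.$$
Distributing the weights as $(m(y)/l)^{1/2}$ on $F$ and $(m(y)\,l)^{1/2}c_l^\eta/l^\beta$ on $\phi_l$, Cauchy--Schwarz combined with the equivalent $T^2$-norm above gives
$$|\langle \pi_{\eta,\beta}F, g\rangle| \lesssim \|F\|_{T^2}\left(\sum_{l\geq 1} \frac{l\,(c_l^\eta)^2}{l^{2\beta}}\|\phi_l\|_{L^2}^2\right)^{1/2}.$$
Since $c_l^\eta \simeq l^{\eta-1}$ (the standard asymptotic for the Taylor coefficients of $(1-z)^{-\eta}$), everything reduces to the Littlewood--Paley-type $L^2$ estimate
$$\sum_{l\geq 1} l^{2(\eta-\beta)-1}\,\|(I+P)^\eta P^{l-1}\Delta^{\eta-\beta}g\|_{L^2}^2 \lesssim \|g\|_{L^2}^2.$$

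Since $\Delta$ is self-adjoint with spectrum in $[0,2]$, the spectral theorem reduces this last bound to the pointwise estimate
$$h(\lambda)^2 := \lambda^{2(\eta-\beta)}(2-\lambda)^{2\eta}\sum_{l\geq 1} l^{2(\eta-\beta)-1}(1-\lambda)^{2(l-1)} \lesssim 1, \qquad \lambda \in [0,2].$$
Using the elementary asymptotic $\sum_{l\geq 1} l^{2\alpha-1}r^l \simeq (1-r)^{-2\alpha}$ with $r=(1-\lambda)^2$, so that $1-r = \lambda(2-\lambda)$, one computes
$$h(\lambda)^2 \simeq \lambda^{2(\eta-\beta)}(2-\lambda)^{2\eta}\bigl(\lambda(2-\lambda)\bigr)^{-2(\eta-\beta)} = (2-\lambda)^{2\beta},$$
which is bounded on $[0,2]$.

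The main obstacle is the behavior at the endpoint $\lambda=2$ (equivalently, at $-1$ in the spectrum of $P$), where the series $\sum l^{2(\eta-\beta)-1}(1-\lambda)^{2(l-1)}$ does not decay; it is precisely the factor $(I+P)^\eta$, producing $(2-\lambda)^{2\eta}$, together with the hypothesis $\eta>\beta$, that forces $h$ to remain bounded there. Once this spectral estimate is in hand, the duality argument closes.
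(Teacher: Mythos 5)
Your proof is correct: the paper does not prove this lemma itself but only recalls it from \cite{Fen2} (Lemma 4.8), and your duality argument --- reducing via Cauchy--Schwarz and the asymptotics $c_l^\eta\simeq l^{\eta-1}$ to a vertical Littlewood--Paley bound settled by the spectral theorem --- is essentially the standard proof given there. Only your closing remark is slightly misplaced: the hypothesis $\eta>\beta$ is what controls the endpoint $\lambda=0$, through the factor $\lambda^{2(\eta-\beta)}$ coming from $\Delta^{\eta-\beta}$ against the divergence $(\lambda(2-\lambda))^{-2(\eta-\beta)}$ of the series, whereas at $\lambda=2$ the factor $(2-\lambda)^{2\eta}$ from $(I+P)^\eta$ alone suffices since $\beta>0$ (and under \eqref{LB} the point $-1$ is not in the spectrum of $P$ anyway).
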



\begin{lem} \label{molecules}
 Suppose that $A$ is a $T^1(\Gamma)$-atom associated with a ball $B \subset \Gamma$. 
Then for every $M\in \N^*$, $\beta>0$ and $\epsilon\in (0,+\infty)$, there exist an integer $\eta = \eta_{\beta,\epsilon}$ and a uniform constant $C_{\beta,\epsilon}>0$ such that 
$C_{\beta,\epsilon}^{-1} \pi_{\eta,\beta}(A)$ is a $\epsilon$-molecule associated with the ball $B$.
\end{lem}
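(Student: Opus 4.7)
The plan is to find $b\in L^2(\Gamma)$ satisfying $\pi_{\eta,\beta}(A) = [I - (I+k_0\Delta)^{-1}]b$ together with the molecular estimates, where $B=B(x_0,k_0)$. Exploiting the algebraic identity $I - (I+k_0\Delta)^{-1} = k_0\Delta(I+k_0\Delta)^{-1}$, the natural candidate is $b := (I+k_0\Delta)(k_0\Delta)^{-1}\pi_{\eta,\beta}(A)$, which expands as
\[
b \;=\; \pi_{\eta,\beta}(A) \;+\; \frac{1}{k_0}\sum_{l\geq 1}\frac{c_l^\eta}{l^\beta}\,\Delta^{\eta-\beta-1}(I+P)^\eta P^{l-1}A(\cdot,l).
\]
This makes sense provided $\eta$ is an integer with $\eta>\beta+1$; I choose $\eta$ moreover large compared to $\epsilon$ (the precise threshold is fixed at the end). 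The factorisation $[I-(I+k_0\Delta)^{-1}]b = \pi_{\eta,\beta}(A)$ is then a direct computation on $L^2(\Gamma)$.

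The relevant atomic input is that $\operatorname{supp}A\subset\hat B$ forces $A(\cdot,l)$ to be supported in $B$ and to vanish for $l>k_0$, together with the Carleson control $\sum_{l=1}^{k_0}\frac{1}{l}\|A(\cdot,l)\|_{L^2}^2\leq V(B)^{-1}$. I estimate $\|b\|_{L^2(C_j(x_0,k_0))}$ in two regimes. For $j=0$, I use the $L^2$-boundedness of $\pi_{\eta,\beta}:T^2\to L^2$ together with $\|A\|_{T^2}\lesssim V(B)^{-1/2}$ for the first summand, and for the second summand I combine analyticity (giving $\|\Delta^{\eta-\beta-1}(I+P)^\eta P^{l-1}\|_{L^2\to L^2}\lesssim l^{-(\eta-\beta-1)}$) with $c_l^\eta\simeq l^{\eta-1}$ and Cauchy--Schwarz; the $k_0^{-1}$ prefactor then compensates the bound $\sum_{l=1}^{k_0}\|A(\cdot,l)\|_{L^2}\lesssim k_0 V(B)^{-1/2}$ arising from Cauchy--Schwarz, yielding $\|b\|_{L^2(C_0(x_0,k_0))}\lesssim V(B)^{-1/2}$, which is the desired molecular bound by doubling.

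For $j\geq 1$, I exploit $\rho(B,C_j(x_0,k_0))\gtrsim 2^j k_0$ and apply the $L^2$ Gaffney estimates of Proposition \ref{GEforPk} (extended to the fractional powers $\Delta^{\eta-\beta}$ and $\Delta^{\eta-\beta-1}$ by the same series expansion trick as in the proof of Proposition \ref{Fen4OffDiagonalDecayLbeta}) to both summands of $b$; this produces, for any $N\in\N$, a polynomial decay factor $(l/(2^j k_0))^N$. Summing in $l$ and invoking Cauchy--Schwarz against the atomic Carleson control, the powers of $l$ and $k_0$ telescope so that each piece of $b$ is bounded by $\lesssim 2^{-jN}V(B)^{-1/2}$. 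Finally, choosing $N\geq d/2+\epsilon$ and applying Proposition \ref{Fen4propDV} in the form $V(B)\gtrsim 2^{-jd}V(x_0,2^j k_0)$, I obtain $\|b\|_{L^2(C_j(x_0,k_0))}\lesssim 2^{-j\epsilon}V(x_0,2^j k_0)^{-1/2}$ for all $j\geq 0$ with a constant $C_{\beta,\epsilon}$ depending only on $\beta$, $\epsilon$, $\eta(\beta,\epsilon)$ and the graph, so that $C_{\beta,\epsilon}^{-1}\pi_{\eta,\beta}(A)$ is an $\epsilon$-molecule associated with $B$. The principal technical obstacle is the bookkeeping of the powers of $l$ and $k_0$ across the Cauchy--Schwarz step: one must verify that the $k_0^{-1}$ prefactor in the second summand cancels exactly against the extra factor of $l$ arising from the missing $\Delta$, and that the Gaffney estimates continue to hold for the non-integer exponent $\eta-\beta$.
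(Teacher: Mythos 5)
Your proposal is correct and follows essentially the same route as the paper: the paper makes the identical choice $b=(I+k\Delta)(k\Delta)^{-1}\pi_{\eta,\beta}(A)$, splits it into the same two summands, takes $\eta$ of the same size ($\eta=\lceil d_0/2+\epsilon+\beta\rceil+2$), and controls $\|b\|_{L^2(C_j(x_0,k))}$ using the boundedness of $\pi_{\eta,\beta}$ and Gaffney-type decay for the (fractional) powers $\Delta^{\eta-\beta}$, $\Delta^{\eta-\beta-1}$ exactly as you describe. The only cosmetic difference is that the paper runs the annulus estimates by duality against $h\in L^2(C_j)$ via the off-diagonal bounds for the Littlewood--Paley functionals $g_{\eta-\beta}$ (Proposition \ref{Fen4OffDiagonalDecaygbeta2}), whereas you estimate directly and close with Cauchy--Schwarz against the Carleson condition; both hinge on the same subordination trick for non-integer exponents that you correctly flag as the technical point to verify.
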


\begin{proof}
Let $d_0$ be the value that appears in Proposition \ref{Fen4propDV}. Set $\eta = \lceil \frac{d_0}{2} + \epsilon+ \beta \rceil + 2$, that is the only integer such that 
$$\eta \geq \frac{d_0}2 + \epsilon + \beta + 2 > \eta -1.$$

Let $A$ be a $T^1$-atom associated with a ball $B$ of radius $k$ and center $x_0$. We write
$$a : = \pi_{\eta,\beta}(A) = k\Delta(I+k\Delta)^{-1} b$$
where
$$b:= \sum_{l\geq 1} \frac{c_l^\eta}{l^\beta} \left(\frac{I+k\Delta}{k}\right) \Delta^{\eta-\beta-1}(I+P)^\eta P^{l-1} A(.,l)$$
Let us check that $a$ is an $\epsilon$-molecule associated with $B$, up to multiplication by some harmless constant $C_{\beta,\epsilon}$. First, one has, for all $h\in L^2(B(x_0,2\eta^B k))$,
\[\begin{split}
   \left| \left< b,h \right> \right|& \leq  \sum_{l\geq 1} \frac{c_l^\eta}{l^\beta} \left|\left<(k^{-1}+ \Delta)\Delta^{\eta-\beta-1}(I+P)^\eta P^{l-1} A(.,l),h \right> \right| \\
& = \sum_{l\geq 1} \frac{c_l^\eta}{l^\beta} \left|\left< A(.,l),(k^{-1}+ \Delta)\Delta^{\eta-\beta-1}(I+P)^\eta P^{l-1} h \right>\right| \\
& \lesssim \sum_{l\geq 1} l^{\eta-\beta-1} \|A(.,l)\|_{L^2(B)}\|(k^{-1}+ \Delta)\Delta^{\eta-\beta-1}(I+P)^\eta P^{l-1}h\|_{L^2(B)}\\ 
& \lesssim  \|A\|_{T^2} \left(  \sum_{l= 1}^{k} l^{2(\eta-\beta)-1} \|(k^{-1}+ \Delta)\Delta^{\eta-\beta-1}(I+P)^\eta P^{l-1}h\|_{L^2(B)}^2  \right)^{\frac12} \\
& \lesssim \|A\|_{T^2} \left(  \sum_{l= 1}^{k} l^{2(\eta-\beta)-1} \left[ \left\|\frac1l \Delta^{\eta-\beta-1}(I+P)^\eta P^{l-1}h\right\|_{L^2(B)}^2 
+ \|\Delta^{\eta-\beta}(I+P)^\eta P^{l-1}h\|_{L^2(B)}^2 \right]\right)^{\frac12} \\
& \lesssim  \|A\|_{T^2} \left( \left\| g_{\eta-\beta-1}(I+P)^\eta h \right\|_{L^2} + \left\| g_{\eta-\beta}(I+P)^\eta h \right\|_{L^2} \right) \\
& \lesssim  \|A\|_{T^2} \left\| (I+P)^\eta h \right\|_{L^2} \\
& \lesssim \frac{1}{V(B)^{\frac{1}{2}}} \left\|h \right\|_{L^2}
\end{split}\]
where we used the $L^2$-boundedness of the quadratic Littlewood-Paley functional for the last but one line (see \cite[Lemma 3.2]{Fen1}, \cite{LMX}).

Let $j> B\log_2(\eta)>1$ and $h\in L^2(C_j(x_0,k))$. Note that $y \in \Supp (I+P)^\eta h$ implies that $\rho(y,x_0) \geq 2^{j+B-1} k$. Thus $\rho(\Supp (I+P)^\eta h, B) \gtrsim 2^jk$. Check then that
\[\begin{split}
   \left| \left< b,h \right> \right|&  \lesssim \|A\|_{T^2} \frac1k\left(  \sum_{l= 1}^{k} l^{2(\eta-\beta)-1} \|(1+k\Delta)\Delta^{\eta-\beta-1}(I+P)^\eta P^{l-1}h\|_{L^2(B)}^2  \right)^{\frac12} \\
& \lesssim k^{\eta-\beta-2} \|A\|_{T^2} \left(  \sum_{l= 1}^{k} \left[ l \|\Delta^{\eta-\beta-1}(I+P)^\eta P^{l-1}h\|_{L^2(B)}^2  
+ l^3 \|\Delta^{\eta-\beta}(I+P)^\eta P^{l-1}h\|_{L^2(B)}^2 \right]\right)^{\frac12} \\
& \lesssim k^{\eta-\beta-2}   \|A\|_{T^2} \left[ \left\| G_{1} \Delta^{\eta-\beta-2} (I+P)^\eta h \right\|_{L^2(B)} 
+ \left\| G_{1} \Delta^{\eta-\beta-2} (I+P)^\eta h \right\|_{L^2(B)}\right] \\
& \lesssim \frac{1}{V(x_0,2^jk)}\frac{k^{\eta-\beta-2}}{(2^jk)^{\eta-\beta-2}} \|A\|_{T^2} \left\| (I+P)^\eta h \right\|_{L^1} \\
& \lesssim 2^{-j(\frac{d_0}{2} + \epsilon)} \|A\|_{T^2} \left\|h \right\|_{L^2} \\
& \lesssim \frac{2^{-j(\frac{d_0}{2} + \epsilon)}}{V(B)^{\frac{1}{2}}} \left\|h \right\|_{L^2}\\
& \lesssim \frac{2^{-j\epsilon}}{V(2^jB)^{\frac{1}{2}}} \left\|h \right\|_{L^2}\\
\end{split}\]
where we used Proposition \ref{Fen4OffDiagonalDecaygbeta2} for the forth line and Proposition \ref{Fen4propDV} for the last one.
We conclude that, up to multiplication by some harmless constant, $b$ is an $\epsilon$-molecule.
\end{proof}

\begin{prop} \label{Fen4Main12}
Let $\epsilon>0$ and $\beta>0$. Then $E^1_{quad,\beta}(\Gamma)  \subset H^1_{mol,\epsilon}(\Gamma) \cap L^2(\Gamma)$ and
$$\|f\|_{H^1_{mol\epsilon}} \lesssim \|f\|_{H^1_{quad,\beta}}\qquad \forall f\in H^1_{quad,\beta}(\Gamma).$$
\end{prop}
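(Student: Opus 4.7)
My plan is to reverse the strategy used in Proposition \ref{Fen4Main11}: lift $f\in E^1_{quad,\beta}(\Gamma)$ to an element of the tent space $T^1(\Gamma)$, use the tent-space atomic decomposition (Theorem \ref{theoTent}), and push everything back down to $\Gamma$ via $\pi_{\eta,\beta}$, which turns tent atoms into $\epsilon$-molecules by Lemma \ref{molecules}.

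Concretely, given $f\in E^1_{quad,\beta}(\Gamma)\subset L^2(\Gamma)$, I introduce
$$F(y,l) := l^{\beta}\,\Delta^{\beta} P^{l-1}f(y), \qquad (y,l)\in \Gamma\times \N^*.$$
A direct unrolling of the definitions gives $\mathcal A F(x) = L_\beta f(x)$, so $\|F\|_{T^1} = \|L_\beta f\|_{L^1} = \|f\|_{H^1_{quad,\beta}}$. A standard Fubini argument combined with the $L^2$-boundedness of the vertical Littlewood--Paley functional $g_\beta$ (Theorem \ref{Fen4LpboundednessofLbeta}) also yields $\|F\|_{T^2} \lesssim \|f\|_{L^2}$, so $F \in T^1(\Gamma)\cap T^2(\Gamma)$. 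Theorem \ref{theoTent}(iv) then provides a decomposition $F = \sum_j \lambda_j A_j$ with $T^1$-atoms $A_j$, with $\sum_j|\lambda_j|\lesssim \|F\|_{T^1}$ and convergence in $T^2(\Gamma)$.

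The crucial ingredient is the reproducing formula $f = \pi_{\eta,\beta}(F)$ for an appropriate integer $\eta>\beta$. Formally, since $\sum_{l\geq 1}c_l^\eta z^{l-1} = (1-z)^{-\eta}$ and $\Delta(I+P) = I-P^2$,
$$\pi_{\eta,\beta}(F) = \sum_{l\geq 1} c_l^\eta\, \Delta^{\eta}(I+P)^{\eta} P^{2(l-1)} f = (I-P^2)^{\eta}\,(I-P^2)^{-\eta} f = f.$$
Once this is justified, we choose $\eta = \eta_{\beta,\epsilon}$ as in Lemma \ref{molecules}. The $L^2$-boundedness of $\pi_{\eta,\beta}$ and the $T^2$-convergence of $F = \sum_j \lambda_j A_j$ give $f = \sum_j \lambda_j \pi_{\eta,\beta}(A_j)$ in $L^2(\Gamma)$. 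By Lemma \ref{molecules}, each $C_{\beta,\epsilon}^{-1}\pi_{\eta,\beta}(A_j)$ is an $\epsilon$-molecule. Setting $\mu_j := C_{\beta,\epsilon}\lambda_j$, we obtain a molecular representation of $f$ with $\sum_j |\mu_j| \lesssim \|F\|_{T^1} = \|f\|_{H^1_{quad,\beta}}$; since every $\epsilon$-molecule has $L^1$-norm $\lesssim 1$ by Proposition \ref{Fen4sizemolec}, the series also converges in $L^1(\Gamma)$, hence pointwise, as required by the definition of $H^1_{mol,\epsilon}(\Gamma)$.

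The main obstacle is the rigorous justification of the reproducing formula $f = \pi_{\eta,\beta}(F)$. Via the spectral theorem, this amounts to showing that the truncated multipliers
$$\phi_N(\lambda) := \sum_{l=1}^N c_l^\eta\, \lambda^\eta (2-\lambda)^\eta (1-\lambda)^{2(l-1)}$$
converge to $1$ pointwise on the spectrum of $\Delta$ and remain uniformly bounded, so that $\pi_{\eta,\beta}(F) = \lim_N \phi_N(\Delta)f = f$ strongly in $L^2(\Gamma)$. The pointwise limit is clear for $\lambda\in(0,2)$, but $\lambda = 0$ requires that the spectral measure of $f$ place no mass at $0$ (standard on an infinite connected graph, since $L^2$-harmonic functions vanish), and the uniform bound on $\phi_N$ follows from an Abel summation argument similar to the one used in \cite[Proposition 2.1]{Fen2}.
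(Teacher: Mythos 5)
Your proposal follows essentially the same route as the paper's proof: set $F(\cdot,l)=[l\Delta]^{\beta}P^{l-1}f$, note $\mathcal A F=L_\beta f$ so $F\in T^1\cap T^2$, apply the tent-space atomic decomposition of Theorem \ref{theoTent}, map atoms to $\epsilon$-molecules via $\pi_{\eta,\beta}$ and Lemma \ref{molecules}, and upgrade the $L^2$-convergence to $L^1$-convergence using the uniform $L^1$-bound on molecules. The only difference is that you sketch a spectral-calculus justification of the reproducing formula $f=\pi_{\eta,\beta}(F)$ (correctly noting the issue at the spectral endpoints), where the paper simply cites \cite[Corollary 2.3]{Fen2}; this is a welcome detail but not a different argument.
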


\begin{proof}
Let $f\in E^1_{quad,\beta}(\Gamma)$. We set
$$F(.,l) = [l\Delta]^\beta P^{l-1}f.$$
By definition of $E^1_{quad,\beta}(\Gamma)$, one has that $F \in T^1(\Gamma)$. Moreover, since $f\in L^2(\Gamma)$, $L^2$-boundedness of Littlewood-Paley functionals (see \cite{BRuss}, \cite{Fen1}) yields that $F \in T^2(\Gamma)$.
Thus, according to Theorem \ref{theoTent}, there exist a numerical sequence $(\lambda_i)_{i\in \N}$ and a sequence of $T^1$-atoms $(A_i)_{i\in \N}$ such that
$$F = \sum_{i=0}^\infty \lambda_i A_i \qquad \text{ in } T^1(\Gamma) \text{ and } T^2(\Gamma)$$
and
$$\sum_{i\in \N} |\lambda_i| \lesssim \|F\|_{T^1} = \|f\|_{H^1_{quad,\beta}}.$$
Choose $\eta$ as in Lemma \ref{molecules}. Since $f\in L^2(\Gamma)$,
\begin{equation} \label{L2DecomMol}\begin{split}
f & =  \pi_{\eta,\beta} F(.,l)  \\
& = \sum_{i = 0}^{+\infty} \lambda_i \pi_{\eta,\beta} (A_i)
  \end{split} \end{equation}
where the sum converges in $L^2(\Gamma)$ (see \cite[Corollary 2.3]{Fen2}). 
According to Lemma \ref{molecules}, $\pi_{\eta,\beta} (A_i)$ are molecules and 
then \eqref{L2DecomMol} would provide an $\epsilon$-molecular representation of $f$ if the convergence held in $L^1(\Gamma)$.
By uniqueness of the limit, it remains to prove that $\sum \lambda_i \pi_{\eta,\beta} (A_i)$ converges in $L^1$. 
Indeed,
\[\begin{split}
   \sum_{i\in \N} |\lambda_i| \left\|\pi_{\eta,\beta} (A_i)\right\|_{L^1} & \lesssim \sum_{i\in \N} |\lambda_i| \\
& < +\infty
 \end{split}\]
where the first line comes from Proposition \ref{Fen4sizemolec} and the second one from the fact that $(\lambda_i)_{i\in\N} \in \ell^1(\N)$.
\end{proof}

\begin{lem} \label{molecules2}
 Suppose that $A$ is a $T^1(\Gamma)$-atom associated with a ball $B \subset \Gamma$. 
Let $\epsilon>0$, there exist an integer $\eta = \eta_{\epsilon}$ and a uniform constant $C_{\epsilon}>0$ such that 
$C_{\epsilon}^{-1} d\Delta^{-\frac{1}{2}}\pi_{\eta,\frac{1}{2}}(A)$ is an $\epsilon$-molecule associated with the ball $B$.
\end{lem}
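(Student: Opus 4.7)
The plan is to mimic the proof of Lemma \ref{molecules}, with the $\epsilon$-molecule form on $T_\Gamma$ being $\sqrt{k}\,d(I+k\Delta)^{-1/2}b$ rather than $k\Delta(I+k\Delta)^{-1}b$. Starting from
\[
\Delta^{-1/2}\pi_{\eta,1/2}(A) = \sum_{l\geq 1}\frac{c_l^\eta}{\sqrt{l}}\Delta^{\eta-1}(I+P)^\eta P^{l-1}A(\cdot,l),
\]
and inserting $k^{-1/2}(I+k\Delta)^{1/2}\cdot\sqrt{k}(I+k\Delta)^{-1/2} = I$, one is led to the representation
\[
d\Delta^{-1/2}\pi_{\eta,1/2}(A) \;=\; \sqrt{k}\,d(I+k\Delta)^{-1/2}b,
\qquad
b := \sum_{l\geq 1}\frac{c_l^\eta}{\sqrt{lk}}(I+k\Delta)^{1/2}\Delta^{\eta-1}(I+P)^\eta P^{l-1}A(\cdot,l).
\]
It then suffices to verify, for $\eta=\eta_\epsilon$ a sufficiently large integer (depending on $\epsilon$ and the doubling dimension $d_0$ from Proposition \ref{Fen4propDV}), that $C_\epsilon^{-1}b$ satisfies the molecular decay bound $\|b\|_{L^2(C_j(x_0,k))} \leq 2^{-j\epsilon}V(x_0,2^jk)^{-1/2}$.

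As in the proof of Lemma \ref{molecules}, I would bound $\|b\|_{L^2(C_j(x_0,k))}$ by duality, testing against $h\in L^2(C_j(x_0,k))$. The new ingredient compared with Lemma \ref{molecules} is the factor $(I+k\Delta)^{1/2}$, which is not polynomial in $\Delta$ and whose $L^2$ operator norm grows like $\sqrt{k}$. To avoid this, I would exploit the trivial decomposition
\[
(I+k\Delta)^{1/2} \;=\; (I+k\Delta)^{-1/2}(I+k\Delta) \;=\; (I+k\Delta)^{-1/2} + k\Delta\,(I+k\Delta)^{-1/2},
\]
which splits $b = b^{(1)}+b^{(2)}$ into two pieces whose inner operators $\Delta^{\eta-1}$ and $\Delta^{\eta}$, composed with the uniformly $L^2$-bounded auxiliary $(I+k\Delta)^{-1/2}$, are treatable exactly as in the proof of Lemma \ref{molecules} via the Littlewood--Paley quadratic functionals $g_{\eta-1}$ and $g_\eta$. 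After Cauchy--Schwarz in the $l$-variable (using $\|A\|_{T^2}\leq V(B)^{-1/2}$), the local case $j\leq B\log_2\eta$ follows from the $L^2$-boundedness of $g_{\eta-1}$, $g_\eta$, $(I+P)^\eta$ and $(I+k\Delta)^{-1/2}$ (Theorem \ref{Fen4LpboundednessofLbeta}).

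For the far case $j>B\log_2\eta$, the support of $(I+P)^\eta h$ lies at $\rho$-distance $\gtrsim 2^jk$ from $B$. I would then factor a large power of $k\Delta$ out of each piece and apply Proposition \ref{Fen4OffDiagonalDecaygbeta2} to rescaled functionals of the form $g_{\eta-1}(k\Delta)^{\eta-3}$ and $g_\eta(k\Delta)^{\eta-2}$, producing an off-diagonal factor $\sim (2^j)^{-(\eta-3)}$ together with a volume gain $V(x_0,2^jk)^{-1/2}$. Combined with the doubling loss $V(x_0,2^jk)^{d_0/2}$ incurred in passing between $L^1$ and $L^2$ bounds on $C_j(x_0,k)$, this yields an overall decay of order $2^{-j(\eta-3-d_0/2)}V(x_0,2^jk)^{-1/2}$, so the choice $\eta \geq \lceil \epsilon + d_0/2 \rceil + 3$ gives exactly the required molecular estimate. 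The principal technical difficulty is the handling of the non-polynomial factor $(I+k\Delta)^{1/2}$; the two-term splitting above trades it for the uniformly $L^2$-bounded operator $(I+k\Delta)^{-1/2}$ at the cost of one additional $k\Delta$ factor, which the largeness of $\eta$ absorbs without trouble.
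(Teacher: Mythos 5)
Your overall architecture matches the paper's: rewrite $d\Delta^{-\frac12}\pi_{\eta,\frac12}(A)=\sqrt{k}\,d(I+k\Delta)^{-\frac12}b$ with the same $b$, verify the molecular decay of $b$ by duality against $h\in L^2(C_j(x_0,k))$, split into a local case handled by $L^2$-boundedness of quadratic functionals and a far case handled by off-diagonal decay, and take $\eta\simeq\lceil d_0/2+\epsilon\rceil+O(1)$. However, your treatment of the factor $(I+k\Delta)^{\frac12}$ --- which you correctly identify as the principal difficulty --- does not close. First, in the local case the piece $b^{(2)}$ carries the operator $k\Delta(I+k\Delta)^{-\frac12}$, whose $L^2$ norm grows like $\sqrt{k}$; declaring $(I+k\Delta)^{-\frac12}$ ``uniformly bounded'' and treating the remaining $k\Delta^{\eta}$ as in Lemma \ref{molecules} leaves a bare factor of $k$ in the quadratic estimate (the exponent of $l$ becomes $2\eta-2$ against the $2\eta-1$ of $g_\eta$, and the mismatch is a factor $k/l$, not $2^{-j}$), and no choice of $\eta$ can absorb a power of $k$. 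The correct mechanism is spectral: one uses $k^2(1-\lambda)^2/(1+k(1-\lambda))\leq k(1-\lambda)$ together with the truncation $l\leq k$ coming from $\operatorname{supp}A\subset\hat B$, exactly as in the paper's computation of $k^{-1}\sum_{l=1}^{k}l^{2(1+t)}\|(I+k\Delta)^{\frac12}\Delta^{1+t}P^{l-1}h\|_{L^2}^2$ via the spectral measure $dE_{hh}$.

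Second, in the far case your pieces still contain $(I+k\Delta)^{-\frac12}$, which does not preserve supports, so Proposition \ref{Fen4OffDiagonalDecaygbeta2} (stated for $g_\beta(k\Delta)^m$ applied to $f\1_F$) does not apply to the composite operator as written; you would need an off-diagonal estimate that propagates through the resolvent square root, and none of the cited results supplies it. The paper resolves this by expanding $(I+k\Delta)^{-\frac12}=\sum_{m\geq 0}a_mP^m(\cdot)$ with $\sum a_m z^m$ the Taylor series of $(1+k(1-z))^{-\frac12}$ and $\sum_m a_m=1$, which reduces everything to the Gaffney estimates for $(k\Delta)^jP^{l+m-1}$ of Proposition \ref{GEforPk} and allows the sum over $m$ to be controlled. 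Your two-term splitting can be repaired along the same lines (either by this power-series expansion or by proving a separate Gaffney estimate for $(I+k\Delta)^{-\frac12}$ in the spirit of Corollary \ref{Fen4GEforResolvant}), but as stated both the local bound for $b^{(2)}$ and the far-field propagation are genuine gaps.
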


\begin{proof}
Let $d_0$ be the value in Proposition \ref{Fen4propDV}. Set again $\eta = \lceil \frac{d_0}{2} + \epsilon\rceil  + 2$. We will also write $t$ for $\lceil \frac{d_0}{2} + \epsilon \rceil \in \N^*$.

Let $A$ be a $T^1$-atom associated with a ball $B$ of radius $k$ and center $x_0$. We write
$$a : = d\Delta^{-\frac{1}{2}} \pi_{\eta,\frac{1}{2}}(A) = \sqrt k d (I+k\Delta)^{-\frac{1}{2}} b$$
where
\begin{equation}\label{bexpression}\begin{split} b & := \sum_{l\geq 1} \frac{c_l^\eta}{\sqrt{l}} \left(\frac{I+k\Delta}{k}\right)^{\frac{1}{2}}\Delta^{\eta-1}(I+P)^\eta P^{l-1} A(.,l) \\
   & = \frac1{\sqrt k}\sum_{m=0}^\infty a_m \sum_{l\geq 1} \frac{c_l^\eta}{\sqrt{l}} (I+ k\Delta) \Delta^{1+t} (I+P)^\eta P^{l+m-1} A(.,l) \\
  \end{split}\end{equation}
where $\sum a_m z^m$ is the Taylor serie of the function  $\ds \left(1+ k(1-z)\right)^{-\frac{1}{2}}$.

\noindent Let us check that $a$ is an $\epsilon$-molecule associated with $B$, up to multiplication by some harmless constant $C_{\epsilon}$. 

Let  $h \in L^2(x_0,2\eta^Bk)$. One has with the first equality in \eqref{bexpression},
\[\begin{split}
   \left|\left< b,h \right>\right| & \leq  k^{-\frac12}  \sum_{l\geq 1} \frac{c_l^\eta}{\sqrt{l}} \left|\left< A(.,l),\left(I+k\Delta\right)^{\frac{1}{2}}\Delta^{1+t}(I+P)^\eta P^{l-1} h \right>\right| \\
& \lesssim  k^{-\frac12} \sum_{l\geq 1} \frac{c_l^\eta}{\sqrt{l}} \|A(.,l)\|_{L^2(B)}\|\left(I+k\Delta\right)^{\frac{1}{2}}\Delta^{1+t}(I+P)^\eta P^{l-1}h\|_{L^2}\\ 
& \lesssim  \|A\|_{T^2_2} k^{-\frac12} \left(  \sum_{l= 1}^{k} l^{2(\eta-1)} \|\left(I+k\Delta\right)^{\frac{1}{2}}\Delta^{1+t}(I+P)^\eta P^{l-1}h\|_{L^2}^2  \right)^{\frac12} \\
& \lesssim  \|A\|_{T^2_2} k^{-\frac12} \left(  \sum_{l= 1}^{k} l^{2(1+t)} \|\left(I+k\Delta\right)^{\frac{1}{2}}\Delta^{1+t}(I+P)^\eta P^{l-1}h\|_{L^2}^2  \right)^{\frac12} \\
& \lesssim  \|A\|_{T^2_2} \|(I+P)^\eta h\|_{L^2} \\
& \lesssim  \|A\|_{T^2_2} \|h\|_{L^2}
 \end{split}\]
where we use that the functionals $\ds h \mapsto k^{-\frac12} \left( \sum_{l=1}^{k} l^{2(1+t)} |\left(I+k\Delta\right)^{M+\frac{1}{2}}\Delta^{1+t}P^{l-1}h|^2 \right)^{1/2}$ are $L^2$-bounded uniformly in $k$. 
Indeed, since $(-1) \notin \Sp(P)$, functional calculus provides, for some $a>-1$,
\[\begin{split} 
\|\left(I+k\Delta\right)^{\frac{1}{2}}\Delta^{1+t}P^{l-1}h\|_{L^2}^2 & = \int_{a}^1 (1+k(1-\lambda))(1-\lambda)^{2(1+t)} \lambda^{2(l-1)} dE_{hh}(\lambda)
\end{split}\]
Thus,
\[\begin{split}
   k^{-1}  & \sum_{l=1}^{k} l^{2(1+t)}  \|\left(I+k\Delta\right)^{\frac{1}{2}}\Delta^{1+t}P^{l-1}h\|_{L^2}^2 \\
& \lesssim \int_{a}^1 (1-\lambda)^{2(1+t)} \sum_{l=1}^{k} l^{2(1+t)-1}\lambda^{2(l-1)} dE_{hh}(\lambda) +  \int_{a}^1 (1-\lambda)^{2(1+t)+1} \sum_{l=1}^{k} l^{2(1+t)} \lambda^{2(l-1)} dE_{hh}(\lambda) \\
& \lesssim \int_{a}^1 (1-\lambda)^{2(1+t)} \sum_{l=1}^{\infty} l^{2(1+t)-1}\lambda^{2(l-1)} dE_{hh}(\lambda) +  \int_{a}^1 (1-\lambda)^{2(1+t)+1} \sum_{l=1}^{\infty} l^{2(1+t)} \lambda^{2(l-1)} dE_{hh}(\lambda) \\
& \lesssim \int_{a}^1 \frac{(1-\lambda)^{2(1+t)}}{(1-\lambda^2)^{2(1+t)}} dE_{hh}(\lambda) +  \int_{a}^1 \frac{(1-\lambda)^{2(1+t)+1}}{(1-\lambda^2)^{2(1+t)+1}} dE_{hh}(\lambda) \\
& \quad = \int_{a}^1 \left[(1+\lambda)^{-2(1+t)} + (1+\lambda)^{-2(1+t)-1} \right] dE_{hh}(\lambda) \\
& \lesssim \int_{a}^1 dE_{hh}(\lambda) = \|h\|_{L^2}^2
  \end{split}\]
  where the third inequality comes from the fact that $l^{\xi-1} \sim c^{\xi}_l$ (see Lemma B.1 in \cite{Fen1}).

\smallskip

Let $j> B\log_2(\eta) >1$ and $h\in L^2(C_j(B))$. One has $\rho(\Supp (I+P)^\eta h ,B) \gtrsim 2^jr$ (cf Lemma \ref{molecules}). The second identity in \eqref{bexpression} provides

\[\begin{split}
   \left|\left< b,g \right>\right| & \leq  k^{-\frac12} \sum_{m=0}^\infty a_m \sum_{l\geq 1} \frac{c_l^\eta}{\sqrt{l}} \left|\left< A(.,l),(I+k\Delta)\Delta^{1+t}(I+P)^\eta P^{l+m-1} h \right>\right| \\
& \lesssim k^{-\frac12} \sum_{m=0}^\infty a_m \sum_{l\geq 1} \frac{c_l^\eta}{\sqrt{l}} \|A(.,l-1)\|_{L^2(B)}\|\Delta^{1+t}(I+k\Delta)(I+P)^\eta P^{l+m-1}h\|_{L^2(B)}\\ 
& \lesssim k^{-\frac12} \|A\|_{T^2_2} \sum_{m=0}^\infty a_m \left(  \sum_{l= 1}^{k} l^{2(\eta-1)} \|(I+k\Delta)\Delta^{1+t}(I+P)^\eta P^{l+m-1}h\|_{L^2(B)}^2  \right)^{\frac12} \\
& \lesssim  k^{-\frac12} \|A\|_{T^2_2} \|(I+P)^\eta h\|_{L^2} \sum_{m=0}^\infty a_m \left(  \sum_{l= 1}^{k} l^{2(t+1)} \dfrac{k^2 + (l+m)^2}{(l+m)^{2(2+t)}} \left(1+ \frac{2^jk}{l+m}\right)^{-N}  \right)^{\frac12},
\end{split}\]
where the last line holds for any $N\in \N$ with Proposition \ref{GEforPk} (the constant depends on $N$). If we fix $N=2(2+t)$, we obtain
\[\begin{split}
 \left|\left< b,g \right>\right| & 
 \lesssim  k^{-\frac12} \|A\|_{T^2_2} \|(I+P)^\eta h\|_{L^2} \sum_{m=0}^\infty a_m  \left(  \sum_{l= 1}^{k} l^{2(t+1)}  k^2(2^jk)^{-2(2+t)} + (2^jk)^{-2(1+t)}  \right)^{\frac12}  \\
& \lesssim k^{-\frac12} \|A\|_{T^2_2} \|h\|_{L^2} (2^jk)^{-(1+t)}   \left(\sum_{m=0}^\infty a_m\right)  \left(  \sum_{l= 1}^{k} l^{2(1+t)} \right)^{\frac12} \\
& \lesssim  \|A\|_{T^2_2} \|h\|_{L^2} 2^{-j(1+t)} \\
& \lesssim  \frac{2^{-j(t+1)}}{V(B)^\frac12} \|h\|_{L^2} \\
& \lesssim \frac{2^{-j\epsilon}}{V(2^jB)} \|h\|_{L^2}
  \end{split}\]
where we used the relation $\sum a_m =1$ for the third line and the Proposition \ref{Fen4propDV} for the last one.
\end{proof}

\begin{prop} \label{Fen4Main22}
Let $M\in \N$ and $\epsilon>0$. Then $E^1_{quad,\frac{1}{2}}(T_\Gamma) \subset H^1_{mol,\epsilon}(T_\Gamma) \cap H^2(T_\Gamma)$ and
$$ \|G\|_{H^1_{mol,\epsilon}} \lesssim \|G\|_{H^1_{quad,\frac{1}{2}}} \qquad \forall G \in E^1_{quad,\frac{1}{2}}(T_\Gamma) $$
\end{prop}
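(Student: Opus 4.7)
The strategy mirrors exactly the proof of Proposition \ref{Fen4Main12}, with the tent-space synthesis $\pi_{\eta,\beta}$ replaced by $d\Delta^{-\frac12}\pi_{\eta,\frac12}$ and Lemma \ref{molecules} replaced by Lemma \ref{molecules2}.

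Let $G\in E^1_{quad,\frac12}(T_\Gamma)$. By definition $f:=\Delta^{-\frac12}d^* G$ belongs to $L^2(\Gamma)$ and satisfies $\|L_{\frac12} f\|_{L^1}=\|G\|_{H^1_{quad,\frac12}}$. Moreover, since $G\in H^2(T_\Gamma)$, the identity $d\Delta^{-1}d^* = \mathrm{Id}_{H^2(T_\Gamma)}$ from \eqref{Fen4dDeltadstar} yields $G=d\Delta^{-\frac12}f$ in $H^2(T_\Gamma)$. Setting $F(\cdot,l):=(l\Delta)^{\frac12} P^{l-1}f$, the quadratic assumption on $G$ gives $F\in T^1(\Gamma)$ with $\|F\|_{T^1}=\|G\|_{H^1_{quad,\frac12}}$, and the $L^2$-boundedness of $L_{\frac12}$ (Theorem \ref{Fen4LpboundednessofLbeta}) combined with $f\in L^2(\Gamma)$ yields $F\in T^2(\Gamma)$.

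By Theorem \ref{theoTent}, decompose $F=\sum_{i}\lambda_i A_i$ with convergence both in $T^1(\Gamma)$ and in $T^2(\Gamma)$, where the $A_i$ are $T^1$-atoms and $\sum_i|\lambda_i|\lesssim\|F\|_{T^1}=\|G\|_{H^1_{quad,\frac12}}$. Since $\pi_{\eta,\frac12}:T^2(\Gamma)\to L^2(\Gamma)$ is bounded and $f=\pi_{\eta,\frac12}F$ (cf.\ the proof of Proposition \ref{Fen4Main12}), one obtains in $L^2(\Gamma)$
\[
f=\sum_{i=0}^\infty \lambda_i \pi_{\eta,\frac12}(A_i).
\]
Applying the $L^2(\Gamma)\to H^2(T_\Gamma)$ isometry $d\Delta^{-\frac12}$ (Proposition \ref{Fen4danddstar}) and using $G=d\Delta^{-\frac12}f$ gives
\[
G=\sum_{i=0}^\infty \lambda_i\, d\Delta^{-\frac12}\pi_{\eta,\frac12}(A_i)
\]
with convergence in $H^2(T_\Gamma)\subset L^2(T_\Gamma)$.

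It remains to promote this to an $\epsilon$-molecular representation in $L^1(T_\Gamma)$. Choosing $\eta=\eta_\epsilon$ as in Lemma \ref{molecules2}, each $C_\epsilon^{-1}d\Delta^{-\frac12}\pi_{\eta,\frac12}(A_i)$ is an $\epsilon$-molecule on $T_\Gamma$, hence by Proposition \ref{Fen4sizemolec} satisfies $\|d\Delta^{-\frac12}\pi_{\eta,\frac12}(A_i)\|_{L^1(T_\Gamma)}\lesssim 1$ uniformly in $i$. Therefore
\[
\sum_{i=0}^\infty |\lambda_i|\,\|d\Delta^{-\frac12}\pi_{\eta,\frac12}(A_i)\|_{L^1(T_\Gamma)}\lesssim \sum_i|\lambda_i|<+\infty,
\]
so the series converges in $L^1(T_\Gamma)$; by uniqueness of the limit it converges to $G$ pointwise as well. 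This exhibits an $\epsilon$-molecular representation of $G$ with $\|G\|_{H^1_{mol,\epsilon}}\lesssim \sum_i|\lambda_i|\lesssim \|G\|_{H^1_{quad,\frac12}}$, which proves the claim.

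The only delicate point, and the place where this argument differs from that of Proposition \ref{Fen4Main12}, is the identification $G=d\Delta^{-\frac12}f$: this requires $G\in H^2(T_\Gamma)$ in order to invoke \eqref{Fen4dDeltadstar}, which is exactly built into the definition of $E^1_{quad,\frac12}(T_\Gamma)$.
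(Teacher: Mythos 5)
Your proposal is correct and follows essentially the same route as the paper: set $f=\Delta^{-\frac12}d^*G$, decompose the associated tent-space function $F$ into $T^1$-atoms via Theorem \ref{theoTent}, reproduce $f=\pi_{\eta,\frac12}F$ in $L^2(\Gamma)$, apply $d\Delta^{-\frac12}$ together with \eqref{Fen4dDeltadstar}, and upgrade the $L^2(T_\Gamma)$ convergence to an $L^1(T_\Gamma)$ molecular representation using Lemma \ref{molecules2} and the uniform $L^1$ bound on molecules. The only (immaterial) difference is your normalization $F(\cdot,l)=(l\Delta)^{\frac12}P^{l-1}f=\sqrt{l}\,P^{l-1}d^*G$ versus the paper's $\sqrt{l+1}\,P^{l}d^*G$, an index shift that changes nothing in the argument.
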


\begin{proof}
Let $G\in E^1_{quad,\frac{1}{2}}(T_\Gamma)$. We set
$$F(.,l) = \sqrt{l+1} P^{l}d^*G.$$
By definition of $H^1_{quad,\frac{1}{2}}(T_\Gamma)$, one has that $F \in T^1(\Gamma)$. 
Moreover, since $G\in E^2(T_\Gamma)$, Proposition \ref{Fen4danddstar} yields that $\Delta^{-\frac{1}{2}}d^* G \in L^2(G)$ and therefore, with the $L^2$-boundedness of Littlewood-Paley functionals, $F\in T^2(\Gamma)$.

Thus, according to Theorem \ref{theoTent}, there exist a scalar sequence $(\lambda_i)_{i\in \N} \in \ell^1(\N)$ and a sequence of $T^1_2$-atoms $(A_i)_{i\in \N}$ such that
$$F = \sum_{i=0}^\infty \lambda_i A_i \qquad \text{ in } T^1(\Gamma) \text{ and in } T^2(\Gamma)$$
and
$$\sum_{i\in \N} |\lambda_i| \lesssim \|F\|_{T^1} = \|G\|_{H^1_{quad,\frac{1}{2}}}.$$
Choose $\eta$ as in Lemma \ref{molecules}. Using \cite[Corollary 2.3]{Fen2}, since $\Delta^{-\frac{1}{2}} d^* G\in L^2(\Gamma)$,
\[ \begin{split}
\Delta^{-\frac{1}{2}} d^* G& =  \pi_{\eta,\frac{1}{2}} F(.,l)  \\
& = \sum_{i = 0}^{+\infty} \lambda_i \pi_{\eta,\frac{1}{2}} (A_i)
  \end{split}\]
where the sum converges in $L^2(\Gamma)$.
Recall that $d \Delta^{-1} d^* = Id_{H^2(T_\Gamma)}$. Moreover, $d\Delta^{-\frac{1}{2}}$ is bounded from $L^2(\Gamma)$ to $L^2(T_\Gamma)$ (see Proposition \ref{Fen4danddstar}). Then
\begin{equation} \label{L2DecomMol2}
 G = \sum_{i=0}^{+\infty} \lambda_i d\Delta^{-\frac{1}{2}} \pi_{\eta,\frac{1}{2}} (A_i)
\end{equation}
where the sum converges in $L^2(T_\Gamma)$. 
According to Lemma \ref{molecules2}, $d\Delta^{-\frac{1}{2}} \pi_{M,\frac12} (A_i)$ are $\epsilon$-molecules and 
then \eqref{L2DecomMol2} would provide an $\epsilon$-molecular representation of $f$ if the convergence held in $L^1(\Gamma)$.
By uniqueness of the limit, it remains to prove that $\sum \lambda_i d\Delta^{-\frac{1}{2}}\pi_{\eta,\frac{1}{2}} (A_i)$ converges in $L^1$. 
Indeed,
\[\begin{split}
   \sum_{i\in \N} |\lambda_i| \left\|d\Delta^{-\frac12}\pi_{\eta,\frac{1}{2}} (A_i)\right\|_{L^1(T_\Gamma)} & \lesssim \sum_{i\in \N} |\lambda_i| \\
& < +\infty
 \end{split}\]
where the first line comes from Proposition \ref{Fen4sizemolec}  and the second one because $(\lambda_i)_{i\in\N} \in \ell^1(\N)$.
\end{proof}

\subsection{Completion of Hardy spaces.}

\begin{theo} \label{Main3}
 Let $\beta >0$. The completion $H^1_{quad,\beta}(\Gamma)$ of $E^1_{quad,\beta}(\Gamma)$ in $L^1(\Gamma)$ exists. 
 Moreover, if $\epsilon \in (0,+\infty)$, then the spaces $H^1_{mol,\epsilon}(\Gamma)$ and $H^1_{quad,\beta}(\Gamma)$ coincide. More precisely, we have 
 $$E^1_{quad,\beta}(\Gamma) = H^1_{BZ\kappa,M,\epsilon}(\Gamma) \cap L^2(\Gamma).$$
 \end{theo}
 
\noindent Once the equality $H^1_{mol,\epsilon}(\Gamma)= H^1_{quad,\beta}(\Gamma)$ is  established, this space will be denoted by $H^1(\Gamma)$.

\begin{proof} 

Let $\beta>0$ and $\epsilon>0$. Propositions \ref{Fen4Main11} and \ref{Fen4Main12} yield the equality of spaces
\begin{equation} \label{Fen4equalityE1E1} H^1_{mol,\epsilon}(\Gamma) \cap L^2(\Gamma) = E^1_{quad,\beta}(\Gamma) \end{equation}
with equivalent norms. In particular, $E^1_{quad,\beta}(\Gamma)\subset L^1(\Gamma)$.

Since the space of finite sum of $\epsilon$-molecules is dense in $H^1_{mol,\epsilon}(\Gamma)$ (see \cite[Lemma 4.5]{BZ} or \cite[Lemma 3.5]{Fen2}),
the space $H^1_{mol,\epsilon}(\Gamma)$ is the completion of $H^1_{mol,\epsilon}(\Gamma) \cap L^2(\Gamma)$ in $L^1$.
The completion $H^1_{quad,\beta}(\Gamma)$ of $E^1_{quad,\beta}(\Gamma)$ in $L^1(\Gamma)$ exists then too and satisfies
$$H^1_{quad,\beta}(\Gamma)=H^1_{mol,\epsilon}(\Gamma)$$
with equivalent norms.
\end{proof}

\begin{theo} \label{Main5}
 Let $\beta>0$. The completion $H^1_{quad,\beta}(T_\Gamma)$ of $E^1_{quad,\beta}(T_\Gamma)$ in $L^1(T_\Gamma)$ exists.
 
 Moreover, if $\epsilon \in (0,+\infty)$, then the spaces $H^1_{mol,\epsilon}(T_\Gamma)$ and $H^1_{quad,\beta}(T_\Gamma)$ coincide. More precisely, we have 
 $$E^1_{quad,\beta}(T_\Gamma) = H^1_{mol,\epsilon}(T_\Gamma) \cap H^2(T_\Gamma). $$
\end{theo}
\noindent Again, the space $H^1_{BZ2,M+\frac{1}{2},\epsilon}(T_\Gamma)=H^1_{quad,\beta}(T_\Gamma)$ will be denoted by $H^1(T_\Gamma)$.

\begin{proof} 
Let $\epsilon>0$. Propositions \ref{Fen4Main21} and \ref{Fen4Main22} yield the continuous embeddings
$$H^1_{mol,\epsilon}(T_\Gamma) \cap L^2(T_\Gamma) \subset E^1_{\frac12,\beta}(T_\Gamma) \subset  H^1_{mol,\epsilon}(T_\Gamma) \cap L^2(T_\Gamma)$$
from which we deduce the equality of the all the spaces, with equivalent norms.

It follows that the completion of $E^1_{quad,\frac12}(T_\Gamma)$ in $L^1(T_\Gamma)$ exists and satisfies, 
since $H^1_{mol,M-\frac12,p,\epsilon}(T_\Gamma)$ is the completion of $H^1_{mol,M-\frac12,p,\epsilon}(T_\Gamma) \cap L^2(T_\Gamma)$ in $L^1(T_\Gamma)$, we deduce as in the proof of Theorem \ref{Main3} that
$$H^1_{quad,\frac12}(T_\Gamma)=H^1_{mol,\epsilon}(T_\Gamma)$$
with equivalent norms.

Moreover, notice that if $F \in H^2(T_\Gamma)$,
\begin{equation} \label{Fen4trucmuch} F \in E^1_{quad,\beta}(T_\Gamma)  \Longleftrightarrow  \Delta^{-\frac{1}{2}}d^*F \in  E^1_{quad,\beta}(\Gamma).\end{equation}
Indeed,  the implication $\Delta^{-\frac{1}{2}}d^*F \in  E^1_{quad,\beta}(\Gamma)\Rightarrow F \in E^1_{quad,\beta}(T_\Gamma)$ is obvious, and the converse is due to Proposition \ref{Fen4danddstar}.  
Theorem \ref{Main3} implies that all the spaces $E^1_{quad,\beta}(\Gamma)$, $\beta>0$, coincide. 
Together with \eqref{Fen4trucmuch}, for all $\beta>0$, the spaces $E^1_{quad,\beta}(T_\Gamma)$ coincide with $E^1_{quad,\frac12}(T_\Gamma)$.
Hence, for all $\beta>0$, the completion $H^1_{quad,\beta}(T_\Gamma)$ of $E^1_{quad,\beta}(T_\Gamma)$ in $L^1(T_\Gamma)$ exists and satisfies
$$H^1_{quad,\beta}(T_\Gamma)=H^1_{mol,\epsilon}(T_\Gamma)$$
with equivalent norms.
\end{proof}

\section{Proof of Theorem \ref{MainTheo}}

\label{ProofMainTheo}

Let $(\Gamma,\mu)$ be a weighted graph as defined in Section \ref{defgraphs}. Assume that $\Gamma$ satisfies \eqref{LB}, \eqref{DV} and \eqref{UE}. 

\medskip

Let $H^1(\Gamma)$ and $H^1(T_\Gamma)$ by the two spaces defined with Theorems \ref{Main3} and \ref{Main5}. 
The Theorem \ref{MainTheo} will be proven if we prove the four following facts.
\begin{enumerate}[(1)]
\item The spaces $H^1(\Gamma)$ and $H^1(T_\Gamma)$ are included in $L^1(\Gamma)$ and $L^1(T_\Gamma)$ respectively.
\item The operator $d\Delta^{-\frac12}$ satisfies
$$\|d\Delta^{-\frac12}f\|_{H^1(T_\Gamma)} \simeq \|f\|_{H^1(\Gamma)}  \qquad \forall f\in H^1(\Gamma) \cap L^2(\Gamma).$$
\item Let $\mathcal O$ be a $L^2$-bounded linear operator, which is also bounded from $H^1(\Gamma) \cap L^2(\Gamma)$ to $L^1(\Gamma)$. Then
$$\|\mathcal O f\|_{L^p(\Gamma)} \leq C \|f\|_{L^p(\Gamma)} \qquad \forall f\in L^p(\Gamma) \cap L^2(\Gamma),$$
that is $\mathcal O$ can be extended to a linear operator bounded on $L^p(\Gamma)$.
\item  The Riesz transform $\nabla \Delta^{-\frac12}$ satisfies
$$\|\nabla \Delta^{-\frac12} O f\|_{L^p(\Gamma)} \leq C \|f\|_{L^p(\Gamma)} \qquad \forall f\in L^p(\Gamma) \cap L^2(\Gamma),$$
and thus can be extended to a $L^p(\Gamma)$-bounded operator.
\end{enumerate}

The fact (1) is given by the fact that $H^1(\Gamma) = H^1_{mol,\epsilon}(\Gamma)$ and $H^1(T_\Gamma) = H^1_{mol,\epsilon}(T_\Gamma)$ are continuously embedded in $L^1(\Gamma)$ and $L^1(T_\Gamma)$ respectively (see Proposition \ref{Fen4H1complete}).

\medskip

The fact (2) comes from the definition of the Hardy spaces via quadratic functionals. Indeed, by definition of the norm in $E^1_{quad,\beta}(T_\Gamma)$, we have, for any $f\in H^1(\Gamma) \cap L^2(\Gamma)$
\[\begin{split}
\|d\Delta^{-\frac12}f\|_{H^1(T_\Gamma)} & \simeq \|d\Delta^{-\frac12}f\|_{H^1_{quad,\beta}(T_\Gamma)} \\
& \quad =  \|\Delta^{-\frac12}d^*d\Delta^{-\frac12}f\|_{H^1_{quad,\beta}(\Gamma)} \\
& \quad = \|f\|_{H^1_{quad,\beta}(\Gamma)} \\
& \simeq  \|f\|_{H^1(\Gamma)} \\
\end{split}\]

\medskip

Check that Proposition \ref{Fen4Main32} yields the inclusion $E^1_0(\Gamma) \subset H^1(\Gamma)$. Recall that fact (1) says that 
$H^1(\Gamma) \subset L^1(\Gamma)$. Therefore, Corollary \ref{Fen4interpolationH1L2bis} implies exactly fact (3).

\medskip

It remains to verify fact (4). Note that it will be a direct consequence of facts (1),  (2) and (3) if the operator $\nabla \Delta^{-\frac12}$ were linear.  

\noindent Facts (1) and (2) yields
\begin{equation} \label{Fen4TRisL1bounded}  \|\nabla \Delta^{-\frac{1}{2}} f\|_{L^1(\Gamma)} = \|d\Delta^{-\frac{1}{2}} f\|_{L^1(T_\Gamma)} \lesssim \|d\Delta^{-\frac12}f\|_{H^1(T_\Gamma)} \lesssim \|f\|_{H^1(\Gamma)}. \end{equation}
Define now for any function $\phi$ on $T_\Gamma$ the linear operator
$$\nabla_\phi f(x) = \sum_{y\in \Gamma} p(x,y) df(x,y) \phi(x,y)m(y).$$
The boundedness \eqref{Fen4TRisL1bounded} yields then the estimate
$$\|\nabla_\phi \Delta^{-\frac12} f\|_{L^1} \leq \|\nabla \Delta^{-\frac{1}{2}} f\|_{L^1(\Gamma)}  \sup_{x\in \Gamma} |\phi(x,.)|_{T_x} \lesssim \|f\|_{H^1} \sup_{x\in \Gamma} |\phi(x,.)|_{T_x}.$$
Moreover, since $\|\nabla \Delta^{-\frac12}f\|_{L^2} = \|f\|_{L^2}$ (cf \eqref{Fen4Katopb}), one has
$$\|\nabla_\phi \Delta^{-\frac12} f\|_{L^2} \lesssim \|f\|_{L^2} \sup_{x\in \Gamma} |\phi(x,.)|_{T_x}.$$
Set $\ds \phi_f(x,y) = \frac{df(x,y)}{\nabla f(x)}$. Note that $\ds \sup_{x\in \Gamma} |\phi_f(x,.)|_{T_x} =1$.
With fact (3), one has then
$$\|\nabla \Delta^{-\frac12} f\|_{L^p} = \|\nabla_{\phi_f} \Delta^{-\frac12} f\|_{L^p}.\lesssim \|f\|_{L^p} $$

\appendix

\section{Examples of graph satisfying \eqref{DV} and \eqref{UE}}

\label{Fen4SectionExamples}

The aim of this appendix is to give some examples of graphs satisfying our three conditions \eqref{LB}, \eqref{DV} and \eqref{UE}. Indeed, if the condition \eqref{LB} is classical and can be seen as a condition of analyticity, the conditions \eqref{DV} and \eqref{UE} can look unnatural, in particular when $\beta$ is not a constant.

\medskip

But first, let us recall some classical estimates. 
The most common estimates on the Markov kernel $p_k(x,y)$ are the Gaussian estimates, given by 
\begin{equation} \tag{UE$_2$}
p_k(x,y) \lesssim \frac{1}{V_{\d}(x,\sqrt{k}} \exp\left(-c \frac{\d(x,y)^2}{k} \right) \qquad \forall x,y\in \Gamma, \, \forall k \in \N^*.
\end{equation}
These estimates are satisfied by the Markov kernel when the operator $P$ is a random walk on $\Z^n$, or more generally on a discrete group of polynomial growth (see \cite[Theorem 5.1]{HebSC}). The classical result of Delmotte \cite[Theorem 1.7]{Delmotte1} states that the upper and lower Gaussian estimates on graphs
\begin{equation} \tag{LUE$_2$}\frac{c}{V_{\d}(x,\sqrt{k}} \exp\left(-C \frac{\d(x,y)^2}{k} \right) \leq p_k(x,y) \leq \frac{C}{V_{\d}(x,\sqrt{k}} \exp\left(-c \frac{\d(x,y)^2}{k} \right)\end{equation}
are equivalent to the conjunction of \eqref{LB}, \eqref{DV2} and the $L^2$-Poincar\'é inequality on balls (and are also equivalent to a parabolic Harnack inequality).
\noindent Besides, the upper Gaussian estimate \eqref{UEd2} can hold when the corresponding Gaussian lower bounds are not satisfied (see for example two copies of $\Z^n$ linked by an edge in \cite{Russ}).

\medskip

There exist however some graphs where the Gaussian estimates \eqref{UEd2} don't hold. It is the case of the Sierpinski gaskets (\cite{BarlowStFlour}) or the Vicsek graphs (\cite{BCG}). This kind of graphs, whose behaviors is strikingly different from the graphs $\Z^n$, are called fractals graphs. On these graphs, the Markov kernel satisfy some so-called "sub-Gaussian" estimates
\begin{equation} \label{UEm} \tag{UE$_m$}
p_{k-1}(x,y) \leq \frac{C}{V_\d(x, k^\frac1m)} \exp\left( -c \left[ \frac{\d(x,y)^m}{k}\right]^\frac{1}{m-1} \right) \qquad \forall x,y\in \Gamma, \, \forall k \in \N^*
\end{equation}
where $m> 2$ is a real constant. Note that \eqref{UEm} is incompatible with \eqref{UEd2}. Indeed, with \eqref{DV2}, the estimates \eqref{UEd2} and \eqref{UEm} yields respectively the on-diagonal lower bound $p_{2k}(x,x) \geq \frac{c}{V_\d(x, \sqrt k)}$ and $p_{2k}(x,x) \geq \frac{c}{V_\d(x, k^\frac1m)}$ (see Proposition \ref{ODprop} below). Therefore the conjunction of \eqref{UEd2} and \eqref{UEm} provides a contradiction with the doubling volume property.

Necessary and sufficient conditions for \eqref{UEm} and the corresponding lower bounds to hold have been given in, for instance, \cite{GrigorT}, \cite{BCK05} and \cite{BGK}. Some results on sub-Gaussian estimates are also collected in  \cite{grigofractal}.

\medskip

In the sequel, we say that $\Gamma$ satisfies \eqref{VD} if
\begin{equation} \label{VD} \tag{V$_D$}
V_{\d}(x,r) \simeq r^D \qquad \forall x\in \Gamma, \, \forall r\in \N^*.
\end{equation}
Clearly, \eqref{VD} implies \eqref{DV2}. It is shown by Barlow in \cite{BarlowWV} that 
\begin{theo} \label{Barlow}
For any real $D\geq 1$ and $2\leq m\leq D+1$, there exists a graph satisfying \eqref{VD} and \eqref{UEm}. Moreover, the range of $D$ and $m$ are sharp.
\end{theo}
For example, the classical Sierpinski gasket 
satisfies \eqref{VD} and \eqref{UEm} for $D= \log_2 3$ and $m= \log_2 5$. 

\medskip

The Gaussian and sub-Gaussian estimates \eqref{UEd2} and \eqref{UEm} are stronger than the estimates \eqref{UE} when $\beta \equiv 2,m$. It is only a consequence of the fact that an exponential decay is faster than a polynomial decay. Therefore, our results work for the two cases of graphs that satisfies the Gaussian and sub-Gaussian estimates \eqref{UEd2} and \eqref{UEm}.
However, we will describe below a case of graph that satisfies neither \eqref{UEd2} nor \eqref{UEm} and yet satisfies our condition. That is why our assumptions have been expanded  to the case where $\beta$ is not a constant. We do not think that there exists a graph satisfying \eqref{UE} for any bounded function $\beta$ that satisfy our condition - for example, there are no graphs satisfying \eqref{UE} with $\beta \equiv 1$ - but we didn't find a way to prove that the function $\beta$ has to be greater than 2.

In the cases where $\beta$ is a constant (Gaussian and sub-Gaussian cases), it is easy to check that the condition \eqref{DV} is equivalent to the doubling property
\begin{equation} \tag{DV2}
V_\d(x,2r) \leq C V_\d(x,r).
\end{equation}
However, the condition \eqref{DV2} is not the good "doubling" property, because the balls in a graph with the metric given by $\rho := \d^\beta$ can be ellipsoids for the same graph with the metric given by $\d$. That is why the doubling condition in our case depends also on the parameter $\beta$. 

\medskip

We present now a graph $\Gamma$ that satisfies \eqref{DV} and \eqref{UE} for some function $\beta$, but that doesn't satisfy \eqref{UEm} for any real $m$. In order to do this, we will build the graph $\Gamma$ as a product of two graphs $\Gamma_1$ and $\Gamma_2$ that satisfies \eqref{UEm} for different values $m_1$ and $m_2$. 
A more general discussion about the fact that the off-diagonal decay of the Markov kernel may depend on the direction can be found in \cite{HK04,HK04b}.

\begin{defi}
Let $(\Gamma_1,\mu^1)$ and $(\Gamma_2,\mu^2)$ be two weighted graphs. 
The graph $(\Gamma,\mu)$ is the free product of $\Gamma_1$ and $\Gamma_2$ if
\begin{enumerate}[(i)]
 \item $\Gamma = \Gamma_1 \times \Gamma_2$,
 \item for all $x=(x_1,x_2)\in \Gamma$ and $y=(y_1,y_2) \in \Gamma$, $\mu_{xy} = \mu^1_{x_1y_1}\mu^2_{x_2y_2}$,
\end{enumerate}
Note that in this case, if $\d_1$ and $\d_2$ are the canonic distances on $(\Gamma_1, \mu_1)$ and  $(\Gamma_2, \mu_2)$ respectively, then the distance on $\Gamma$ is $\d(x,y): = \max\{\d_1(x_1,y_1), \d_2(x_2, y_2)\}$.
\end{defi}

\begin{rmk}
Let $(\Gamma,\mu)$ be the free product of $(\Gamma_1,\mu^1)$ and $(\Gamma_2,\mu^2)$.
Then the following facts are satisfied for any vertices $x=(x_1,x_2)$ and $y=(y_1,y_2)$
\begin{enumerate}[(i)]
 \item $x \sim y$ if and only if $x_1\sim y_1$ and $x_1\sim y_1$.
 \item $\d(x,y) = \max \{\d_1(x_1,y_1),\d(x_2,y_2)\}$, where $\d$, $\d_1$ and $\d_2$ are the canonical distances on $\Gamma$, $\Gamma_1$ and  $\Gamma_2$ respectively.
\end{enumerate}
\end{rmk}

\begin{prop} \label{Fen4Hypfreeproduct}
Let $(\Gamma_1,\mu^1)$ and $(\Gamma_2,\mu^2)$ satisfying \eqref{LB}.  Set $(\Gamma,\mu)$ be the graph defined as the free product of $\Gamma_1$ and $\Gamma_2$. Then the graph $(\Gamma,\mu,\rho)$ satisfies \eqref{LB}.

Moreover, let $\beta_1, \beta_2$ be functions bounded from below by 1 and from above by $B$. If $\Gamma_1$ and $\Gamma_2$ satisfy respectively (D$_{\beta_1}$), (UE$_{\beta_1}$), (D$_{\beta_2}$) and (UE$_{\beta_2}$), then $\Gamma$ satisfy \eqref{DV} and \eqref{UE} with 
\begin{equation} \label{betaformula}
\beta : = \sup\left\{\beta_1 \frac{\ln \d_1}{\ln \d}, \beta_2 \frac{\ln \d_2}{\ln \d}\right\}.
\end{equation}
when $\d \geq 2$ and $\beta = 1$ when $\d \leq 1$.
\end{prop}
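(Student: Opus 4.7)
The plan is to exploit the product structure fully: the identity $\mu_{xy}=\mu^1_{x_1y_1}\mu^2_{x_2y_2}$ forces every relevant quantity to factor across the two coordinates, reducing \eqref{LB}, \eqref{DV} and \eqref{UE} on $\Gamma$ to the corresponding properties on $\Gamma_1$ and $\Gamma_2$.

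\textbf{Factorization of the kernel and \eqref{LB}.} Summing $\mu_{xy}$ over $y=(y_1,y_2)$ gives $m(x)=m^1(x_1)m^2(x_2)$, hence $p(x,y)=p^1(x_1,y_1)p^2(x_2,y_2)$. A direct induction on the recurrence $p_{k+1}(x,y)=\sum_z p(x,z)p_k(z,y)m(z)$ yields $p_k(x,y)=p^1_k(x_1,y_1)p^2_k(x_2,y_2)$ for every $k\in\N$. In particular $p(x,x)m(x)=\prod_i p^i(x_i,x_i)m^i(x_i)$, so \eqref{LB} on each factor (with constants $\epsilon^i_{LB}$) gives \eqref{LB} on $\Gamma$ with constant $\epsilon^1_{LB}\epsilon^2_{LB}$. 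Note also that \eqref{LB} forces $\mu^i_{x_ix_i}>0$ at every vertex of $\Gamma_i$, which is what justifies padding shorter paths by self-loops and hence the formula $\d(x,y)=\max\{\d_1(x_1,y_1),\d_2(x_2,y_2)\}$ recalled in the excerpt.

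\textbf{Identification of $\rho$ and \eqref{DV}.} For $\d(x,y)\geq 2$ the map $t\mapsto\d(x,y)^t$ is strictly increasing, so the choice of $\beta$ in the statement gives
\[
\rho(x,y)=\d^\beta=\max_{i\in\{1,2\}}\d^{\beta_i(\ln\d_i)/(\ln\d)}=\max_i\d_i^{\beta_i}=\max\{\rho_1(x_1,y_1),\rho_2(x_2,y_2)\};
\]
for $\d(x,y)\leq 1$ both $\d_i$ lie in $\{0,1\}$ and the identity is trivial. Moreover, since $\d=\max\{\d_1,\d_2\}$, the index $i^{*}$ attaining this maximum contributes a term $\beta_{i^{*}}\in[1,B]$ to the supremum defining $\beta$ while every term is bounded by $B$, so $1\leq\beta\leq B$ as required. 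The identity $\rho=\max(\rho_1,\rho_2)$ immediately yields $B_\rho(x,r)=B_{\rho_1}(x_1,r)\times B_{\rho_2}(x_2,r)$ and therefore $V_\rho(x,r)=V_{\rho_1}(x_1,r)V_{\rho_2}(x_2,r)$, so multiplying $(\text{D}_{\beta_1})$ and $(\text{D}_{\beta_2})$ delivers \eqref{DV}.

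\textbf{The pointwise estimate \eqref{UE}.} Combining the factorization of $p_k$ with the hypotheses $(\text{UE}_{\beta_1})$ and $(\text{UE}_{\beta_2})$, together with the elementary bound $(1+a)^{-N}(1+b)^{-N}\leq(1+\max(a,b))^{-N}$ valid for $a,b\geq 0$, I obtain
\[
p_{k-1}(x,y)\leq\frac{C_N^2}{V_{\rho_1}(x_1,k)V_{\rho_2}(x_2,k)}\Bigl(1+\tfrac{\rho_1}{k}\Bigr)^{-N}\Bigl(1+\tfrac{\rho_2}{k}\Bigr)^{-N}\leq\frac{C_N^2}{V_\rho(x,k)}\Bigl(1+\tfrac{\rho(x,y)}{k}\Bigr)^{-N},
\]
which is \eqref{UE}. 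The only conceptually nontrivial point in this plan is the identification $\rho=\max(\rho_1,\rho_2)$ performed in the previous step: this is precisely where the specific shape of $\beta$ prescribed in the statement becomes essential, since it is what makes the product structure compatible with the weighted quasidistance. Once this identification is in place, the remaining arguments amount to multiplying two pointwise kernel estimates or two volume estimates together.
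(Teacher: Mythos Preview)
Your proof is correct and follows essentially the same approach as the paper: factorize $m$, $p$, and $p_k$ across the two coordinates, verify that the choice of $\beta$ makes $\rho=\max(\rho_1,\rho_2)$, deduce the product decomposition of balls and volumes for \eqref{DV}, and multiply the two pointwise kernel bounds for \eqref{UE}. Your write-up is in fact slightly more explicit than the paper's in a couple of places (the self-loop remark justifying $\d=\max(\d_1,\d_2)$, the verification $1\leq\beta\leq B$, and the elementary inequality $(1+a)^{-N}(1+b)^{-N}\leq(1+\max(a,b))^{-N}$).
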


\begin{proof}
Let $p$, $p_1$, $p_2$, $m$, $m_1$, $m_2$ the measures and the Markov kernel of respectively the graphs $\Gamma$, $\Gamma_1$ and $\Gamma_2$.

Remark that $p(x,x) = p^1(x_1,x_1)\cdot p^2(x_2,x_2)$ and $m(x) = m_1(x_1)\cdot m_2(x_2)$. Thus, the fact that $\Gamma$ satisfies \eqref{LB} if and only if both $\Gamma_1$ and $\Gamma_2$ satisfies \eqref{LB} is immediate.

\medskip

Check that by definition of $\beta$, one has $1\leq \beta \leq \max\{\beta_1, \beta_2\}$ and
$$\d^\beta = \sup \{\d_1^{\beta_1}, \d_2^{\beta_2}\}.$$
Set $\rho:=\d^\beta$, $\rho_1: = \d_1^{\beta_1}$ and $\rho_2: = \d_2^{\beta_2}$.
By construction, one has $B_\rho(x,k) = B_{\rho_1}(x_1,k) \times B_{\rho_2}(x_2,k)$. As a consequence,
$$V_\rho(x,k) = V_{\rho_1}(x_1,k) V_{\rho_2}(x_2,k)$$
and then assertion \eqref{DV} follows from the assumptions (D$_{\beta_1}$) and (D$_{\beta_2}$) on the graphs $\Gamma_1$ and $\Gamma_2$.

Recall that we have by construction $p(x,y) = p^1(x_1,y_1)p^2(x_2,y_2)$. Therefore, by induction, we get the relation $p_k(x,y) = p^1_k(x_1,y_1)p^2_k(x_2,y_2)$.
Consequently, for any $N\in \N$, 
\[\begin{split}
   p_{k-1}(x,y) & \leq \frac{C_N}{V_{\rho_1}(x_1,k) V_{\rho_2}(x_2,k)} \left(1+ \frac{\rho_1(x_1,y_1)}{k} \right)^{-N} \left(1+ \frac{\rho_2(x_2,y_2)}{k} \right)^{-N} \\
& \leq C_N \frac{1}{V_{\rho}(x,k)} \min \left\{ \left(1+ \frac{\rho_1(x_1,y_1)}{k} \right)^{-N}, \left(1+ \frac{\rho_2(x_2,y_2)}{k} \right)^{-N} \right\} \\
& \quad = C_N  \frac{1}{V_{\rho}(x,k)} \left(1+ \frac{\max\{\rho_1(x_1,y_1), \rho_2(x_2,y_2)\}}{k} \right)^{-N} \\
& \quad = C_N  \frac{1}{V_{\rho}(x,k)} \left(1+ \frac{\rho(x,y)}{k} \right)^{-N}.
  \end{split}\]
Thus $\Gamma$ satisfies \eqref{UE}.
\end{proof}

We need also the classical following result.

\begin{prop} \label{ODprop}
Let $\Gamma$ be a graph satisfying \eqref{DV} and \eqref{UE} for some bounded function $\beta \geq 1$. Set $\rho:= \d^\beta$. Then
\begin{equation} \label{ODestimate}
p_{2k}(x,x) \simeq \frac{1}{V_\rho(x,k)} \qquad \forall k\in \N^*,\, \forall x \in \Gamma.
\end{equation}
\end{prop}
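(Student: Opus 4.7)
The statement is a two-sided on-diagonal estimate. The upper bound is immediate, so most of the work goes into the lower bound by the standard Cauchy--Schwarz trick, and the only real issue is making the tail estimate effective using the polynomial decay together with the reverse doubling implicit in \eqref{DV} via Proposition~\ref{Fen4propDV}.

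For the upper bound, I would simply instantiate \eqref{UE} at $y=x$ with $k$ replaced by $2k+1$ and any integer $N\geq 0$: this yields
\[
p_{2k}(x,x) \;\leq\; \frac{C_0}{V_\rho(x,2k+1)} \;\leq\; \frac{C_0}{V_\rho(x,k)},
\]
where the last inequality uses only the monotonicity $V_\rho(x,2k+1)\geq V_\rho(x,k)$.

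For the lower bound, I would first invoke the semigroup property and the symmetry \eqref{Fen4symmetry} to write
\[
p_{2k}(x,x) \;=\; \sum_{y\in\Gamma} p_k(x,y)^2\, m(y).
\]
Then for a parameter $K\geq 1$ to be fixed, apply Cauchy--Schwarz on the ball $A = B_\rho(x,Kk)$:
\[
p_{2k}(x,x) \;\geq\; \sum_{y\in A} p_k(x,y)^2 m(y) \;\geq\; \frac{1}{V_\rho(x,Kk)} \left( \sum_{y\in A} p_k(x,y) m(y) \right)^{\!2}.
\]
The key step is to show that one can choose $K=K(\Gamma)$ so that the inner sum is bounded below by, say, $1/2$. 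Using \eqref{Fen4sum=1}, this reduces to controlling the tail $\sum_{y\notin A} p_k(x,y) m(y)$. Applying \eqref{UE} with a sufficiently large exponent $N$, slicing the complement of $A$ into dyadic annuli $\{2^j K k \leq \rho(x,y) < 2^{j+1}Kk\}$, and bounding the volume of each annulus by $V_\rho(x,2^{j+1}Kk) \lesssim (2^{j+1}K)^d V_\rho(x,k)$ (Proposition~\ref{Fen4propDV}), I get
\[
\sum_{y\notin A} p_k(x,y) m(y) \;\lesssim\; \sum_{j\geq 0} (2^jK)^d (1+2^jK)^{-N} \;\lesssim\; K^{d-N},
\]
which is arbitrarily small once $N>d$ and $K$ is large. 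Fixing such a $K$ gives the lower bound $\sum_{y\in A} p_k(x,y) m(y) \geq 1/2$.

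Combining the two ingredients, I conclude
\[
p_{2k}(x,x) \;\geq\; \frac{1}{4\,V_\rho(x,Kk)} \;\gtrsim\; \frac{1}{V_\rho(x,k)},
\]
where the final step once more uses Proposition~\ref{Fen4propDV} to absorb the factor $K^d$. The only mildly delicate point is keeping the roles of $k$ and $k-1$ straight in \eqref{UE} (trivially reconciled since the statement is equivalent at $p_k$ by the remark following the definition of \eqref{UE}) and verifying that all constants in the tail estimate are uniform in $x$ and $k$; both are routine given Proposition~\ref{Fen4propDV}.
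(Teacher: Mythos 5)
Your proof is correct and follows essentially the same route as the paper's: the upper bound directly from \eqref{UE}, and the lower bound via the Cauchy--Schwarz trick on a fixed dilate of the ball, with the tail controlled by dyadic annuli, the polynomial decay in \eqref{UE}, and the volume growth from Proposition~\ref{Fen4propDV}. The only cosmetic difference is that you parametrize the enlarged ball by a factor $K$ while the paper uses $2^{j_0+B}$ for a suitable integer $j_0$; the content is identical.
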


\begin{proof}
The proof is similar to the one of \cite[Theorem 6.1]{CoulGrigor}, which establish the result when the graph satisfies \eqref{DV2} and \eqref{UEd2}. We will do it again for completeness.

\medskip

The upper estimate in \eqref{ODestimate} follows directly from \eqref{DV} and \eqref{UE}. 

We turn to the on-diagonal lower estimate. Let $d$ be the constant that appears in Proposition \ref{Fen4propDV}. With \eqref{UE}, one has
\[\begin{split}
\sum_{j\geq j_0} \sum_{y\in C_j(x,k)} p_{k}(x,y) m(y) 
& \lesssim  \sum_{j\geq j_0}\sum_{y\in C_j(x,k)} \frac{1}{V_\rho(x,k)} 2^{-j(d+1)} m(y) \\
& \lesssim \sum_{j\geq j_0} \sum_{y\in C_j(x,k)} \frac{1}{V_\rho(x,2^{B+j+1}k)} 2^{-j} m(y) \\
& \lesssim \sum_{j\geq j_0} 2^{-j} \lesssim 2^{-{j_0}}
\end{split}\]
with a constant that doesn't depend on $j_0$, $x\in \Gamma$ or $k\in \N^*$. Thus can can fix $j_0$ such that 
$$\sum_{j\geq j_0} \sum_{y\in C_j(x,k)} p_{k}(x,y) m(y)  \leq \frac12 \qquad \forall x\in \Gamma, \, \forall k\in \N^*.$$
Set $B=B_\rho(x,2^{j_0+B})$. Since $\ds \sum_{y\in \Gamma} p(x,y) m(y) = 1$, one has
$$\sum_{y\in B} p(x,y) m(y) \leq \frac12$$
and thus
\[\begin{split}
p_{2k}(x,x) & = \sum_{y\in \Gamma} p_k(x,y)^2 m(y)  \geq \sum_{y\in B} p_k(x,y)^2 m(y) \\
& \geq \frac{1}{V_\rho(B)} \sum_{y\in B} p_k(x,y) m(y)  \geq \frac{1}{2V_\rho(B)}.
\end{split}\]
We conclude by remarking that \eqref{DV} implies 
$$\frac{1}{V_\rho(B)} \gtrsim \frac{1}{V_\rho(x,k)}.$$
\end{proof}

\begin{prop} \label{impossible}
Let $(\Gamma_1, \mu^1)$ satisfying \eqref{LB}, (V$_{D_1}$) and (UE$_{m_1}$) and $(\Gamma_2, \mu^2)$ satisfying \eqref{LB}, (V$_{D_2}$) and (UE$_{m_2}$).
Then the free product graph $(\Gamma,\mu)$ satisfies \eqref{UEm} if and only if $m=m_1=m_2$.
\end{prop}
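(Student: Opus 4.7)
The ``if'' direction is direct. With $m_1 = m_2 = m$, the product formula $p_k(x,y) = p^1_k(x_1,y_1)p^2_k(x_2,y_2)$ inherited from the free product, combined with $V_\d(x, k^{1/m}) = V_{\d_1}(x_1, k^{1/m}) V_{\d_2}(x_2, k^{1/m}) \simeq k^{(D_1+D_2)/m}$ and the elementary inequality $(\d_1^m/k)^{1/(m-1)} + (\d_2^m/k)^{1/(m-1)} \geq (\max\{\d_1,\d_2\}^m/k)^{1/(m-1)} = (\d^m/k)^{1/(m-1)}$ in the exponent, immediately yields \eqref{UEm} for $\Gamma$ (after absorbing a factor of $2$ into the constant $c$).

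For the converse direction, my plan has two stages. In the first stage I compare on-diagonal asymptotics: Proposition \ref{ODprop} applied to $\Gamma$ under the assumed \eqref{UEm} and (V$_D$) with $\beta \equiv m$ gives $p_{2k}(x,x) \simeq k^{-(D_1+D_2)/m}$, while the same proposition applied to each factor $\Gamma_i$ together with the product identity $p_{2k}(x,x) = p^1_{2k}(x_1,x_1) p^2_{2k}(x_2,x_2)$ gives $p_{2k}(x,x) \simeq k^{-D_1/m_1 - D_2/m_2}$. Matching these two asymptotics as $k \to \infty$ forces
\[ \frac{D_1+D_2}{m} = \frac{D_1}{m_1} + \frac{D_2}{m_2}. \]

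In the second stage, I test \eqref{UEm} at a slice point. For $y = (y_1, x_2)$ with $\d_1(x_1,y_1) = r$ one has $\d(x,y) = r$, and combining the assumed bound on $p_k(x,y)$ with the on-diagonal lower bound $p^2_k(x_2,x_2) \gtrsim k^{-D_2/m_2}$ from Proposition \ref{ODprop} applied to $\Gamma_2$, together with the on-diagonal relation above, yields the mixed estimate
\[ p^1_k(x_1, y_1) \lesssim k^{-D_1/m_1} \exp\bigl(-c(r^m/k)^{1/(m-1)}\bigr). \]
Summing this bound against $m_1(y_1)$ over annular shells in $\Gamma_1$ using (V$_{D_1}$), the shells $2^n \lesssim k^{1/m}$ contribute $\sum 2^{nD_1} \simeq k^{D_1/m}$ and the exponential tail is negligible, so $\sum_{y_1} p^1_k(x_1,y_1)m_1(y_1) \lesssim k^{D_1(1/m - 1/m_1)}$. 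Stochastic completeness forces this sum to equal $1$ for every $k$, so letting $k \to \infty$ rules out $m > m_1$; hence $m \leq m_1$, and the symmetric argument at $y = (x_1, y_2)$ gives $m \leq m_2$. Combined with the on-diagonal relation, $m \leq m_i$ implies $D_i/m \geq D_i/m_i$, whose sum $(D_1+D_2)/m \geq D_1/m_1 + D_2/m_2$ can equal the relation only when $m = m_1 = m_2$.

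The main obstacle I anticipate is a clean justification of the annular-sum estimate $\sum_{n\geq 0} 2^{nD_1} \exp(-c(2^{nm}/k)^{1/(m-1)}) \simeq k^{D_1/m}$ uniformly in $k \geq 1$, which is routine but requires care in the discrete setting; in particular one must ensure the near-diagonal shells $2^n \leq k^{1/m}$ exist (which holds for $k$ large enough, acceptable since our contradiction is obtained as $k \to \infty$) to access the dominant contribution. The remainder of the argument is purely algebraic.
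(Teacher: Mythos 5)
Your proof is correct, and while the first stage coincides with the paper's argument, the second stage takes a genuinely different route. The paper also begins by matching the two on-diagonal asymptotics $p_{2k}(x,x)\simeq k^{-(D_1+D_2)/m}$ and $p_{2k}(x,x)\simeq k^{-D_1/m_1-D_2/m_2}$ to obtain the harmonic-mean relation, and it also restricts \eqref{UEm} to a slice to extract the mixed bound $p^1_k(x_1,y_1)\lesssim k^{-D_1/m_1}\exp(-c(\d_1^m/k)^{1/(m-1)})$. But from there the paper orders $m_1\leq m_2$ (hence $m_1\leq m$, since $m$ is a weighted harmonic mean), upgrades the mixed bound to a full (UE$_m$) for $\Gamma_1$, and applies Proposition \ref{ODprop} a second time to force $k^{-D_1/m}\simeq k^{-D_1/m_1}$, whence $m=m_1$ and then $m=m_2$ via the relation. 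You instead integrate the mixed bound against the volume growth (V$_{D_1}$) over annular shells and invoke stochastic completeness $\|p_k(x_1,\cdot)\|_{L^1}=1$ to rule out $m>m_1$; this avoids the WLOG ordering, treats the two factors symmetrically, and replaces the second use of the two-sided on-diagonal estimate by the elementary identity \eqref{Fen4sum=1}, at the cost of the annular-sum computation you correctly flag (which is routine and only needed for large $k$). You also supply the ``if'' direction explicitly, which the paper leaves implicit. One minor point to tidy up in a final write-up: Proposition \ref{ODprop} gives the on-diagonal lower bound only at even times, so either work with $p_{2k}$ throughout (as the paper does) or note that \eqref{LB} gives $p_{k+1}(x,x)\geq p(x,x)m(x)\,p_k(x,x)\geq \epsilon_{LB}\,p_k(x,x)$, so consecutive times are comparable; this does not affect the conclusion.
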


\begin{proof} Assume that $(\Gamma,\mu)$ satisfies $\eqref{UEm}$
 for some $m$. Moreover, according to Proposition \ref{Fen4Hypfreeproduct}, $\Gamma$ satisfies \eqref{UE} with $\beta$ satisfying
$$\rho:= \d^\beta = \sup\{\d_1^{m_1}, \d_2^{m_2}\}.$$
Therefore, with Proposition \ref{ODprop}, one has
$$p_{2k}(x,x) \simeq \frac{1}{V_\rho(x,k)} \simeq \frac{1}{V_{\d}(x,k^{1/m})}.$$
Note that 
$$V_\rho(x,k) = V_{\d_1}(x_1,k^{1/m_1}) V_{\d_2}(x_2,k^{1/m_2}) \simeq k^{\frac{D_1}{m_1}+ \frac{D_2}{m_2}} $$
 and
$$V_\d(x,k^{1/m}) = V_{\d_1}(x_1,k^{1/m}) V_{\d_2}(x_2,k^{1/m}) \simeq k^{\frac{D_1+D_2}{m}}.$$
Therefore, $m$ can only be 
\begin{equation} \label{mrelation}
m = \dfrac{D_1+D_2}{\frac{m_1}{D_1}+ \frac{m_2}{D_2}}.
\end{equation}

Without loss of generality, we can choose $m_1 \leq m_2$. In this case, we have $m_1 \leq m$. Since $\Gamma$ satisfies \eqref{UEm}, then we have for any $x,y,z \in \Gamma_1^2 \times \Gamma_2$,
\[\begin{split}
p_{2k}((x,z),(y,z)) & = p_{2k}^1(x,y) p_{2k}^2(z,z) \\
& \lesssim k^{-\frac{D_1}{m_1}- \frac{D_2}{m_2}} \exp\left(-c\left[\frac{\d_1^m(x,y)}{2k}\right]^{\frac1{m-1}}\right).
\end{split}\]
Yet, since $p_{2k}(z,z) \simeq k^{- \frac{D_2}{m_2}}$ (with Proposition \eqref{ODprop}), one has
\[\begin{split}
p_{2k}^1(x,y) 
& \lesssim k^{-\frac{D_1}{m_1}} \exp\left(-c\left[\frac{\d_1^m(x,y)}{2k}\right]^{\frac1{m-1}}\right). \\
& \lesssim k^{-\frac{D_1}{m}} \exp\left(-c\left[\frac{\d_1^m(x,y)}{2k}\right]^{\frac1{m-1}}\right)
\end{split}\]
and so $\Gamma_1$ satisfies \eqref{UEm}. It implies, again with Proposition \eqref{ODprop}, that for any $k\in \N^*$, it holds
$$p_{2k}(x,x) \simeq k^{-\frac{D_1}{m_1}} \simeq k^{-\frac{D_1}{m}}.$$
The last fact is possible only if $m=m_1$. At last, the relation \eqref{mrelation} allows us to say that $m_2 = m = m_1$.
\end{proof}

\begin{cor} \label{CorCex}
There exists a graph $\Gamma$ that satisfies \eqref{LB}, \eqref{DV} and \eqref{UE} for some bounded function $\beta\geq 1$ but that, for any constant $m\geq 2$, doesn't satisfies the combination of \eqref{DV2} and \eqref{UEm}.
\end{cor}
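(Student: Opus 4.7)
The plan is to construct the example by taking a free product of two fractal-type graphs with \emph{distinct} walk dimensions, and to invoke the preceding results as black boxes. Specifically, I would use Theorem~\ref{Barlow} to fix two integers $D_1, D_2 \geq 1$ and two reals $m_1, m_2$ with $2 < m_1 < m_2$ (and $m_i \leq D_i + 1$), then pick graphs $(\Gamma_1, \mu^1)$ and $(\Gamma_2, \mu^2)$ satisfying \eqref{LB}, $(V_{D_1})$, $(\mathrm{UE}_{m_1})$ and $(V_{D_2})$, $(\mathrm{UE}_{m_2})$ respectively. A concrete choice: take $\Gamma_1$ to be $\mathbb{Z}$ (so $m_1 = 2$) and $\Gamma_2$ to be the Sierpinski gasket graph (so $m_2 = \log_2 5 > 2$). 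Since the Gaussian estimate is a stronger form of \eqref{UE} with $\beta \equiv 2$, and the sub-Gaussian estimate is a stronger form of \eqref{UE} with $\beta \equiv m_i$, each $\Gamma_i$ satisfies $(\mathrm{D}_{\beta_i})$ and $(\mathrm{UE}_{\beta_i})$ with constant $\beta_i \equiv m_i$.

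Next, I would define $\Gamma$ as the free product of $\Gamma_1$ and $\Gamma_2$. By Proposition~\ref{Fen4Hypfreeproduct}, $\Gamma$ satisfies \eqref{LB}, \eqref{DV} and \eqref{UE} with
\[
\beta(x,y) = \sup\left\{ m_1 \frac{\ln \delta_1(x_1,y_1)}{\ln \delta(x,y)}, \ m_2 \frac{\ln \delta_2(x_2,y_2)}{\ln \delta(x,y)} \right\}
\]
when $\delta(x,y) \geq 2$, and $\beta = 1$ otherwise. This $\beta$ is bounded between $1$ and $B := \max\{m_1, m_2\}$, but is clearly not constant (it equals $m_1$ in ``horizontal'' directions and $m_2$ in ``vertical'' ones). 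Thus the first half of the corollary is settled by direct invocation of Proposition~\ref{Fen4Hypfreeproduct}.

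For the second half, I would argue by contradiction. Since each $\Gamma_i$ satisfies $(V_{D_i})$, one has $V_\delta(x,r) \simeq r^{D_1+D_2}$ on $\Gamma$, so \eqref{DV2} automatically holds. If \eqref{UEm} held for some $m \geq 2$, then the hypotheses of Proposition~\ref{impossible} would be satisfied (with \eqref{LB}, $(V_{D_i})$, $(\mathrm{UE}_{m_i})$ on each factor), and that proposition would force $m_1 = m_2$, contradicting our choice. Hence the combination of \eqref{DV2} and \eqref{UEm} fails for every $m \geq 2$, which completes the proof.

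There is essentially no obstacle, as both the construction (Barlow) and the two structural propositions (\ref{Fen4Hypfreeproduct} and \ref{impossible}) are already available; the corollary is just the combination of an existence input with an obstruction output. The only mild care needed is to verify that the Gaussian and sub-Gaussian upper bounds indeed imply our polynomial-decay form \eqref{UE} with constant $\beta$, which is immediate because exponential decay dominates any polynomial.
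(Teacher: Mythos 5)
Your proposal is correct and follows essentially the same route as the paper: invoke Theorem~\ref{Barlow} to get two factors with distinct walk dimensions $m_1\neq m_2$, form the free product, and combine Proposition~\ref{Fen4Hypfreeproduct} (for the positive half) with Proposition~\ref{impossible} (for the obstruction). The only cosmetic slip is that you first require $2<m_1$ and then choose $m_1=2$; since all that matters is $m_1\neq m_2$, this is harmless.
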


\begin{proof}
Theorem \ref{Barlow} yields that there exist $\Gamma_1$ that satisfies (V$_2$) and (UE$_2$) and $\Gamma_2$ that  that satisfies (V$_2$) and (UE$_3$). 
Set $\Gamma$ the free product of $\Gamma_1$ and $\Gamma_2$.
According to Proposition \ref{impossible}, there doesn't exist any constant $m\geq 2$ such that $\Gamma$ satisfies both \eqref{DV2} and \eqref{UEm}.
\end{proof}

\section{The case of Riemannian manifolds}

\label{Riemannian}

\subsection{Results}

Let $M$ be a Riemannian manifold. Denotes by $m$ the Riemannian measure et by $\d$ the Riemannian distance. The notation $T^* M$ is used for the tangent bundle of $M$.
The operator $d$ denotes the exterior differentiation on $M$ and $d^*$ is the adjoint of $d$. That is $d$ maps functions on 1-forms and $d^*$ maps 1-forms on functions. Set $\Delta = d^*d$ the positive Laplace Beltrami operator. As in graphs, we will use $\nabla f(x)$ for the length of the gradient $|d\, f(x)|$.

Let $\beta: M^2 \mapsto \R$ such that $1\leq \beta \leq B <+\infty$. We set $\rho:=\d^\beta$, $B(x,t) = B_{\d^\beta}(x,t) = \{y\in M, \, \rho(x,y) < t\}$ and
$V(x,t)=V_{\d^\beta}(x,t) = m(B(x,t))$. 
The following assumptions will be assumed throughout this section:
\begin{itemize} 
\item The space is doubling for $\rho$, that is
\begin{equation} \label{DVonM} \tag{D$_\beta$}
V(x,2t) \leq V(x,t) \qquad \forall x\in M, \, \forall t>0.
\end{equation}
\item The operator $\Delta$ generates an analytic semigroup $H_t:=e^{-t\Delta}$. The semigroup $H_t$ has a positive kernel $h_t$ satisfying: for all $N\in \N$, there exists $C_N >0$ such that
\begin{equation} \label{UEonM} \tag{UE$_\beta$}
h_t(x,y) \leq C_N \left( 1 + \frac{\rho(x,y)}{t} \right)^{-N}.
\end{equation}
\end{itemize}

Under these assumptions, we can obtain the same assumptions as in the case of graphs, that is

\begin{theo} \label{MainTheoonM}
Let $M$ be a connected non-compact Riemannian manifold. Assume that $M$ satisfies \eqref{DVonM} and \eqref{UEonM}.
Then there exists two complete spaces $H^1(M) \subset L^1(M)$ and $H^1(T^*M) \subset L^1(T^*M)$ such that:
\begin{enumerate}
\item The Riesz transform $d\Delta^{-\frac12}$ is an homomorphism between $H^1(M)$ and $H^1(T^*M)$.
\item Every linear operator bounded from $H^1(M)$ to $L^1(M)$ and bounded on $L^2(M)$ is bounded on $L^p(M)$ for any $p\in (1,2)$.
\end{enumerate}
As a consequence, the Riesz transform $\nabla \Delta^{-\frac12}$ is bounded on $L^p(M)$ for any $p\in (1,2)$.
\end{theo}

\subsection{Properties of the space $H^1(M)$}

The spaces $H^1(M)$ and $H^1(T^*M)$ in Theorem \ref{MainTheoonM} satisfies similar properties as $H^1(\Gamma)$ and $H^1(T_\Gamma)$. For short, we will state only the ones of $H^1(M)$.

\begin{defi} \label{Fen4molec0onM}
Let $\epsilon \in (0,+\infty)$. A function $a\in L^2(M)$ is called a $\epsilon$-molecule if there exist $x\in M$, $t>0$ and a function $b \in  L^2(M)$ such that
\begin{enumerate}[(i)]
 \item $ a = [I-(I+t\Delta)^{-1}]b$, 
 \item $\ds \|b\|_{L^2(C_j(x,t))} \leq 2^{-j\epsilon} V(x,2^jt)^{-\frac{1}{2}}$, for all $j\geq 0$.
\end{enumerate}
\end{defi}

Here and after, $C_j(x,t)$ denotes some annulus of center $x$, of small radius $2^jt$ and of big radius $2^{j+1}t$.

\begin{prop}
Let $\epsilon \in (0,+\infty)$. The $\epsilon$-molecules are uniformly bounded in $L^1(M)$. Moreover, any function $f \in H^1(M)$ admits an $\epsilon$-molecular representation, that is if there exist a sequence $(\lambda_i)_{i\in \N} \in \ell^1$ and a sequence $(a_i)_{i\in \N}$ of $\epsilon$-molecules such that
\begin{equation} \label{Fen4sumfonM}
f=\sum_{i=0}^\infty \lambda_i a_i
\end{equation}
where the convergence of the series to $f$ holds in $L^1(M)$. Moreover the $(\lambda_i)_{i\in \N}$ can be chosen such that $\sum_i |\lambda_i| \simeq \|f\|_{H?1(M)}$. \end{prop}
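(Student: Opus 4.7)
The plan is to mimic, in the continuous setting, the argument that produced Theorem \ref{Main3} for graphs, by transferring word for word the machinery of Sections \ref{Estimates} and \ref{EqualityHardySpaces}. The whole scheme depends on two ingredients: pointwise estimates on $(t\Delta)^j h_t(x,y)$ and $L^2$ Davies--Gaffney bounds for $e^{-t\Delta}$ and $(I+t\Delta)^{-1}$. Both are available under \eqref{DVonM} and \eqref{UEonM}: analyticity of the semigroup together with Cauchy's formula on a disk of radius $\simeq t$ upgrades \eqref{UEonM} to $|(t\Delta)^j h_t(x,y)|\lesssim_{j,N} V(x,t)^{-1}(1+\rho(x,y)/t)^{-N}$, which by Schur-type estimates yields $L^p$ Gaffney estimates for $(t\Delta)^j e^{-t\Delta}$, exactly as in Proposition \ref{GEforPk}. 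Writing the resolvent via the Laplace transform
\[ (I+t\Delta)^{-1}=\frac{1}{t}\int_0^\infty e^{-s/t}e^{-s\Delta}\,ds, \]
these estimates transfer to $I-(I+t\Delta)^{-1}=t\Delta(I+t\Delta)^{-1}$, just as in Corollary \ref{Fen4GEforResolvant}.

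With these tools in hand, the first assertion (uniform $L^1$-boundedness of molecules) is the manifold analog of Corollary \ref{Fen4BoundedMolecules0}: given an $\epsilon$-molecule $a=[I-(I+t\Delta)^{-1}]b$, decompose $b=\sum_{i\ge 0}b\1_{C_i(x,t)}$ and control the $L^2(C_j(x,t))$-norms of each piece by $L^2$-boundedness when $|i-j|\le B+1$ and by $L^2$ Gaffney estimates when $|i-j|>B+1$; then apply Cauchy--Schwarz and sum using \eqref{DVonM}. The geometric convergence gives $\|a\|_{L^1(M)}\le C_\epsilon$.

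For the second assertion, I would introduce the quadratic space
\[ E^1_{quad,\beta}(M) := \Bigl\{ f\in L^2(M) :\, L_\beta f\in L^1(M)\Bigr\},\qquad L_\beta f(x)=\left(\iint_{\gamma(x)}\!\!t^{2\beta-1}\bigl|(t\Delta)^\beta e^{-t\Delta}f(y)\bigr|^2\frac{dm(y)\,dt}{V(x,t)}\right)^{\!1/2}, \]
with $\gamma(x)=\{(y,t)\in M\times(0,\infty):\rho(x,y)<t\}$, and prove $E^1_{quad,\beta}(M)=H^1_{mol,\epsilon}(M)\cap L^2(M)$ with equivalent norms. The forward inclusion mirrors Proposition \ref{Fen4Main11}: it reduces to $\|L_\beta a\|_{L^1}\lesssim 1$ for a single $\epsilon$-molecule, an annular estimate proved using the off-diagonal bounds for $L_\beta(I-(I+t\Delta)^{-1})$ (continuous analog of Proposition \ref{Fen4OffDiagonalDecayLbeta}). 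The reverse inclusion copies Proposition \ref{Fen4Main12}: use the tent space $T^1(M)$ on the space of homogeneous type $(M,\rho,m)$ and apply a continuous version of the projector
\[ \pi_{\eta,\beta}F(x)=\int_0^\infty (t\Delta)^{\eta-\beta}\bigl(I+e^{-t\Delta}\bigr)^{\eta}e^{-t\Delta}F(\cdot,t)(x)\,\frac{dt}{t}, \]
bounded from $T^2(M)$ to $L^2(M)$ and mapping each $T^1$-atom to a constant multiple of an $\epsilon$-molecule (continuous analog of Lemma \ref{molecules}). Atomic decomposition of $F(y,t)=(t\Delta)^\beta e^{-t\Delta}f(y)\in T^1\cap T^2$ then yields the molecular representation, with $\sum_i|\lambda_i|\simeq \|F\|_{T^1}=\|f\|_{E^1_{quad,\beta}}\simeq \|f\|_{H^1(M)}$. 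Density of $E^1_{quad,\beta}(M)$ in $H^1(M)$ (itself the $L^1$-completion of either equivalent space) extends the representation from $H^1(M)\cap L^2(M)$ to all of $H^1(M)$.

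The main obstacle is technical rather than conceptual: one must carefully rewrite the off-diagonal estimates of Section \ref{Estimates} (notably Propositions \ref{Fen4OffDiagonalDecayLbeta}, \ref{Fen4OffDiagonalDecayLbetagbeta} and \ref{Fen4OffDiagonalDecaygbeta2}) in their continuous form. On graphs the proof used explicit Taylor expansions in powers of $P$ together with the summation lemma \ref{Fen4l2l1}; in the continuous case these are replaced by spectral calculus and the identity $I-(I+t\Delta)^{-1}=\int_0^\infty e^{-s/t}\Delta e^{-s\Delta}\,ds$, but the bookkeeping, once the pointwise control of $(t\Delta)^j h_t$ is established, is essentially the same. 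Everything else in Sections \ref{Estimates}--\ref{EqualityHardySpaces} transfers by a mechanical substitution $P^{k-1}\mapsto e^{-t\Delta}$, $\sum_{k\in\N^*}(1/k)\mapsto \int_0^\infty dt/t$.
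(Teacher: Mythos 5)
Your proposal is correct and follows essentially the same route as the paper, which itself only sketches this proposition by asserting that the discrete arguments (Corollaries \ref{Fen4BoundedMolecules0}--\ref{Fen4BoundedMolecules1}, Propositions \ref{Fen4Main11}--\ref{Fen4Main12}, Lemma \ref{molecules} and Theorem \ref{Main3}) transfer to the continuous setting via the substitution $P^{k-1}\mapsto e^{-t\Delta}$, $\sum_k \frac1k \mapsto \int_0^\infty \frac{dt}{t}$. The one point the paper singles out -- that the interchange $L_\alpha(\sum_i\lambda_i a_i)\le\sum_i|\lambda_i|L_\alpha a_i$ now rests on the $L^1$-boundedness of $\Delta^\alpha H_t$ rather than of $\Delta$ -- is covered by the pointwise estimates on $(t\Delta)^j h_t$ you establish at the outset, so nothing is missing.
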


The space $H^1(M)$ can also be defined via quadratic functionals.

\begin{defi}
 Define, for $\alpha>0$, the quadratic functional $L_\alpha$ on $L^2(\Gamma)$ by
$$L_\alpha f(x) =  \left( \int_{\gamma(x)} \frac{1}{ t V(x,t)} |(t\Delta)^{\alpha} H_t f(y)|^2 dm(y) dt \right)^{\frac{1}{2}}$$
where $\gamma(x) = \left\{ (y,t) \in M \times (0,+\infty), \, \rho(x,y) < t \right\}$.
\end{defi}

\begin{prop}
Let $\alpha >0$. One has the equivalence
$$\|L_\beta f\|_{L^1(M)} \simeq \|f\|_{H^1(M)}$$
once $f\in L^2(M)$ and one of the two quantities is finite. In particular, the space 
$$E^1_{quad,\beta}(M) : = \left\{ f\in L^2(M), \,  \|L_\beta f\|_{L^1(M)} < +\infty \right\}$$
can be completed in $L^1(M)$.
\end{prop}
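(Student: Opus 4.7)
The plan is to transport the arguments of Section \ref{EqualityHardySpaces} from the graph setting to the Riemannian setting, replacing discrete sums in $k$ with integrals in $t$ over $(0,+\infty)$, the Markov iterates $P^{k-1}$ with the heat semigroup $H_t=e^{-t\Delta}$, and the resolvent $(I-(I+k\Delta)^{-1})$ with its continuous counterpart. The three ingredients needed are: (a) $L^p$ Gaffney-type off-diagonal decay for $(t\Delta)^j H_t$ and $I-(I+t\Delta)^{-1}$, deduced from \eqref{UEonM} by the same pigeonhole-into-annuli estimate as in Corollary \ref{Fen4LqLpforDeltaPk} and Corollary \ref{Fen4GEforResolvant}; (b) off-diagonal decay of the Lusin functional $L_\alpha$ composed with $I-(I+t\Delta)^{-1}$, as in Proposition \ref{Fen4OffDiagonalDecayLbeta}; (c) the atomic decomposition of the continuous tent space $T^1(M)$ of Coifman--Meyer--Stein, extended to doubling quasi-metric spaces as in \cite{Russ2}.

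For the easy direction, given $f\in H^1(M)\cap L^2(M)$ with molecular representation $f=\sum_i\lambda_i a_i$ with $\sum_i|\lambda_i|\simeq\|f\|_{H^1(M)}$, the sublinearity and $L^1$-convergence yield $\|L_\alpha f\|_{L^1}\le\sum_i|\lambda_i|\,\|L_\alpha a_i\|_{L^1}$, so it suffices to bound $\|L_\alpha a\|_{L^1}$ uniformly in molecules. For a molecule $a=[I-(I+t\Delta)^{-1}]b$ associated with $(x,t)$, decompose $\|L_\alpha a\|_{L^1}\lesssim\sum_j V(x,2^jt)^{1/2}\|L_\alpha a\|_{L^2(C_j(x,t))}$ by H\"older and \eqref{DVonM}, then split $b=\sum_i b\mathbf{1}_{C_i(x,t)}$. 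The near-diagonal pieces ($|i-j|\le B$) are handled by the $L^2$ boundedness of $L_\alpha$ together with the uniform $L^2$ boundedness of $I-(I+t\Delta)^{-1}$; the far-diagonal pieces ($|i-j|>B$) use the off-diagonal decay of $L_\alpha(I-(I+t\Delta)^{-1})$ exactly as in the estimate of $I_1+I_2$ in the proof of Proposition \ref{Fen4Main11}.

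For the harder direction, given $f\in E^1_{quad,\alpha}(M)\subset L^2(M)$, set $F(y,t)=(t\Delta)^\alpha H_t f(y)$. By definition $F\in T^1(M)$, and $L^2$ boundedness of the vertical Littlewood--Paley functional gives $F\in T^2(M)$. Write $F=\sum_i\lambda_i A_i$ with $T^1$-atoms $A_i$ and $\sum|\lambda_i|\lesssim\|F\|_{T^1}=\|L_\alpha f\|_{L^1}$. Using the Calder\'on reproducing formula,
\[
f=c_\alpha\int_0^\infty(t\Delta)^{\eta-\alpha}H_tF(\cdot,t)\,\frac{dt}{t}=:c_\alpha\pi_{\eta,\alpha}(F)
\]
for any integer $\eta>\alpha$, with convergence in $L^2(M)$. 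The proof reduces to showing that for $\eta=\eta(\alpha,\epsilon)$ large enough, $\pi_{\eta,\alpha}(A)$ is, up to a harmless constant, an $\epsilon$-molecule for every $T^1$-atom $A$. Writing $\pi_{\eta,\alpha}(A)=[I-(I+t\Delta)^{-1}](t\Delta)^{-1}\pi_{\eta,\alpha}(A)\cdot(\cdots)$, or more directly following Lemma \ref{molecules}, one tests against $h\in L^2(C_j(x,t))$ by duality: the near-annulus test uses the $L^2$ boundedness of the Littlewood--Paley $g_\beta$ functional, while the far-annulus test uses the continuous analog of Proposition \ref{Fen4OffDiagonalDecaygbeta2} to gain the factor $2^{-j(d/2+\epsilon)}$ after applying Proposition \ref{Fen4propDV}. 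The resulting series $\sum_i\lambda_i\pi_{\eta,\alpha}(A_i)$ converges in $L^1$ by the uniform $L^1$ bound on molecules, giving the desired molecular representation and the reverse inequality $\|f\|_{H^1(M)}\lesssim\|L_\alpha f\|_{L^1}$.

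The main obstacle is the construction and analysis of $\pi_{\eta,\alpha}$ on atoms: in the graph case this required careful manipulation of Taylor coefficients $c_l^\eta$ and $a_m$, and in the continuous setting we must justify the Calder\'on-type identity and the convergence of the corresponding improper integrals. Once the equivalence $\|L_\alpha f\|_{L^1}\simeq\|f\|_{H^1(M)}$ is established on $H^1(M)\cap L^2(M)=E^1_{quad,\alpha}(M)$, completion is automatic: since $H^1(M)$ is already complete and continuously embedded in $L^1(M)$, and since any $f\in H^1(M)$ admits a molecular representation convergent in $L^1$ with partial sums in $L^2(M)$, the space $E^1_{quad,\alpha}(M)$ is dense in $H^1(M)$ in the $L^1$ topology, so its $L^1$-completion exists and equals $H^1(M)$.
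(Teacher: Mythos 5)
Your overall strategy coincides with the paper's: the paper gives no self-contained proof of this proposition, but declares in its ``Discussion on the proofs'' subsection that the arguments of Section \ref{EqualityHardySpaces} (Propositions \ref{Fen4Main11} and \ref{Fen4Main12}, Lemma \ref{molecules}, Theorem \ref{Main3}) transfer to the continuous setting with sums over $k$ replaced by integrals in $t$ and $P^{k-1}$ replaced by $H_t$; your sketch reproduces exactly this transfer, including the tent-space atomic decomposition, the operator $\pi_{\eta,\alpha}$, and the verification that $\pi_{\eta,\alpha}(A)$ is a molecule by testing against $h\in L^2(C_j(x,t))$.

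There is, however, one step where your wording conceals precisely the point that the paper singles out as the nontrivial difference between the two settings. In the easy direction you assert that ``sublinearity and $L^1$-convergence yield $\|L_\alpha f\|_{L^1}\le\sum_i|\lambda_i|\,\|L_\alpha a_i\|_{L^1}$.'' Sublinearity alone does not give this: the molecular series converges only in $L^1(M)$, and $L_\alpha$ is not continuous on $L^1$. To apply the generalized Minkowski inequality one first needs the pointwise identity $(t\Delta)^\alpha H_t f=\sum_i\lambda_i\,(t\Delta)^\alpha H_t a_i$, which requires knowing that $\Delta^\alpha H_t$ is bounded on $L^1(M)$ for each fixed $t>0$ so that the $L^1$-convergent series can be pushed through the operator. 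In the graph case this came for free from $\|P\|_{1\to 1}\le 1$; in the manifold case it must be derived from the pointwise bound \eqref{UEonM} together with the $L^2$-analyticity of $H_t$, which yields pointwise estimates on the kernel of $\Delta H_t$ and hence $L^1$-analyticity of the semigroup. This is exactly the second point emphasized in the paper's discussion (see \eqref{LsuminsumL}); you should supply this justification rather than appeal to sublinearity. The rest of your outline, including the completion argument at the end, is consistent with the paper.
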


In the case where $\beta \equiv 2$, the space $H^1(M)$ constructed here is the same as the one in \cite{DY2}, \cite{AMR} or \cite{HLMMY}. In particular, the space $H^1(M)$ has a functional calculus and an atomic decomposition. 
When $\beta \equiv m$ with $m>2$ is a constant, our space $H^1(M)$ appears to be the same as the ones in \cite{KU} and \cite{ChenThesis}. In these two last references, $H^1(M)$ is defined as the completed space of $E^1_{quad,1}(M)$ and has also a molecular decomposition. The molecular decomposition of \cite{KU} and \cite{ChenThesis} is different of ours, but yields the same space. Indeed, in the two cases, the authors proved that $H^1(M)$ is the completed space of $E^1_{quad,1}(M)$.
Note also that we gave in the present paper some "weak functional calculus" on $H^1(M)$ since the Hardy spaces $H^1(M)$ can be defined from any quadratic functional $L_\alpha$, $\alpha>0$, and not only from $L_1$. This "weak functional calculus" is a crucial point in our proof. One can wonder whether there is the same functional calculus on the spaces $H^1(M)$ as the one found in \cite{AMR}. We didn't know the answer of this question.

\subsection{Discussion on the proofs}

All the methods used here have they counterparts in the case of Riemannian manifolds. Most of them can be found in \cite{HLMMY}. Let us emphasize only two particular points of the proof in the continuous case.

First, the key point argument of this article, that is the Stein relation used in Proposition \ref{Fen4GEforNablaPk} can be also done in the case of Riemannian manifold. It is actually easier to prove because the results from \cite{Dungey2} and \cite{Fen1} on the pseudo-gradient are not needed. 
In the case of Riemannian manifolds, a result similar to Proposition\ref{Fen4GEforNablaPk} can be found in \cite[Lemma 2.2]{CCFR}.

The second point is on the proof of Proposition \ref{Fen4Main11}. We used the $L^1$-boundedness of $\Delta$ to prove that 
\begin{equation} \label{LsuminsumL}
\left\|L_\alpha \left(\sum_{i\in \N} \lambda_i a_i\right)\right\|_{L^1} \leq  \sum_{i\in \N} |\lambda_i|  \|L_\alpha a_i\|_{L^1}.
\end{equation}
In the case of Riemannian manifold, the $L^1$-boundedness of $\Delta$ is replaced by the $L^1$-boundedness of the semigroup $\Delta^\alpha H_t$ for $t>0$. Indeed, the pointwise estimates \eqref{UEonM} and the $L^2$-analyticity of $H_t$ yields some pointwise estimates on the kernel of $\Delta H_t$, which implies in return the $L^1$-analyticity of $H_t$. In particular, $\Delta^\alpha H_t$ is $L^1$-bounded for any $t>0$ and $\alpha>0$.

\subsection{Manifolds satisfying \eqref{DVonM} and \eqref{UEonM}}

As in the case of graph, the function $\beta$ has probably to satisfy more properties than the ones assumed (we only need $\beta$ bounded from below by 1 and from above by some constant $B$). Yet, the aim of the article is not to find the sets of $\beta$ that can actually occur.

A manifold $M$ can be built from graph $\Gamma$ by replacing the edges of the graph with tubes of length 1 and then gluing the tubes together smoothly at the vertices. In this case, $M$ and $\Gamma$ will have similar structures at infinity (see the Appendix in \cite{CCFR}). Together with Theorem \ref{Barlow} or Corollary \ref{CorCex}, it yields a family of Riemannian manifolds that satisfy the estimates \eqref{DVonM} and \eqref{UEonM} for some $\beta \not\equiv 2$.

\bibliographystyle{alpha}

\bibliography{../Biblio}

\end{document}